\documentclass[preprint,nonatbib]{elsarticle}
\usepackage{fullpage}
\usepackage{standalone}
\usepackage{import}

\makeatletter
\let\c@author\relax
\makeatother

\usepackage[table]{xcolor}
\usepackage{amsmath}
\usepackage{amsthm}
\usepackage{amssymb} 
\usepackage{hyperref}
\usepackage{graphicx}
\usepackage{float}
\usepackage{tikz-cd}
\usepackage{caption}
\usepackage{subcaption}
\usepackage{array}
\usepackage{extarrows}
\usepackage{multirow}
\usepackage{tikz}
\usepackage{makecell}
\usepackage{hhline}
\usepackage{ mathdots }
\usepackage{cleveref}
\usepackage{enumitem}
\usetikzlibrary{cd}
\usetikzlibrary{graphs}
\usetikzlibrary {graphs.standard}
\usetikzlibrary{shapes.geometric}

\DeclareMathOperator{\LS}{RP}

\DeclareMathOperator{\ROS}{ROS}
\DeclareMathOperator{\SROS}{SROS}

\allowdisplaybreaks[3]

\newtheorem{theorem}{Theorem}[section]
\newtheorem{lemma}[theorem]{Lemma}

\newtheorem{corollary}[theorem]{Corollary}
\newtheorem{conjecture}[theorem]{Conjecture}

\theoremstyle{definition}
\newtheorem{definition}[theorem]{Definition}
\newtheorem{example}[theorem]{Example}

\newtheorem{observation}[theorem]{Observation}

\Crefname{conjecture}{Conjecture}{Conjectures}

\numberwithin{equation}{section}

\usepackage{biblatex} 
\addbibresource{references.bib}

\hypersetup{pdfauthor=author}

\makeatletter
\def\ps@pprintTitle{%
  \let\@oddhead\@empty
  \let\@evenhead\@empty
  \def\@oddfoot{\reset@font\hfil\thepage\hfil}
  \let\@evenfoot\@oddfoot
}
\makeatother

\begin{document}

\title{Further results on latin squares with disjoint subsquares using rational outline squares}

\author[1]{Tara Kemp\corref{cor1}}
\ead{t.kemp@uq.net.au}
\author[1]{James Lefevre}
\ead{j.lefevre@uq.edu.au}

\affiliation[1]{organization={School of Mathematics and Physics, ARC Centre of Excellence, Plant Success in Nature and Agriculture},
    addressline={The University of Queensland},
    city={Brisbane},
    postcode={4072},
    country={Australia}}

\cortext[cor1]{Corresponding author}

\begin{abstract}
    In this paper we consider the problem of finding latin squares with sets of pairwise disjoint subsquares. We develop a new necessary condition on the sizes of the subsquares which incorporates and extends the known conditions. We provide a construction for the case where all but two of the subsquares are the same size, and in this case the condition is sufficient. We obtain these results using symmetric rational outline squares, and additionally provide several new results and extensions to this theory.
\end{abstract}

\maketitle

\section{Preliminaries}

A \emph{latin square} of order $n$ is an $n\times n$ array with each of $n$ symbols occurring exactly once in each row and column. Throughout, we assume that the symbols are from the set $[n] = \{1,2,\dots,n\}$, and we index the rows and columns using the same set. An $m\times m$ sub-array of a latin square which is itself a latin square is an order $m$ \emph{subsquare}. There has been much work on finding latin squares without subsquares of predetermined sizes (see \cite{allsop2023latin}) and the probability of a random latin square having any proper subsquares (see \cite{gill2024canonical}). In this paper, we consider the existence of latin squares with sets of pairwise disjoint subsquares. Subsquares are considered disjoint when they share no rows, columns or symbols.

\begin{example}
    The latin squares in \Cref{fig: nonconsecutive square example,fig: square example} are of order 8 and 9 respectively with disjoint subsquares of orders 2 and 3.

    \begin{figure}[h]
    \centering
    \begin{subfigure}{0.3\textwidth}
        \centering
        $\arraycolsep=4pt\begin{array}{|c|c|c|c|c|c|c|c|} \hline
        \cellcolor{lightgray}1 & \cellcolor{lightgray}6 & 2 & 3 & 4 & 5 & 8 & \cellcolor{lightgray}7 \\ \hline
        4 & 5 & 1 & 6 & 7 & 2 & 3 & 8 \\ \hline
        \cellcolor{lightgray}6 & \cellcolor{lightgray}7 & 3 & 8 & 2 & 4 & 5 & \cellcolor{lightgray}1 \\ \hline
        3 & 4 & 6 & 7 & 8 & 1 & 2 & 5 \\ \hline
        \cellcolor{lightgray}7 & \cellcolor{lightgray}1 & 8 & 2 & 5 & 3 & 4 & \cellcolor{lightgray}6 \\ \hline
        5 & 2 & 7 & 1 & 3 & 8 & 6 & 4 \\ \hline
        2 & 8 & \cellcolor{lightgray!40}4 & \cellcolor{lightgray!40}5 & 1 & 6 & 7 & 3 \\ \hline
        8 & 3 & \cellcolor{lightgray!40}5 & \cellcolor{lightgray!40}4 & 6 & 7 & 1 & 2 \\ \hline
        \end{array}$
        \caption{An order 8 latin square with two disjoint subsquares}
        \label{fig: nonconsecutive square example}
    \end{subfigure}
    \quad\quad\quad
    \begin{subfigure}{0.3\textwidth}
        \centering
    $\arraycolsep=4pt\begin{array}{|c|c|c|c|c|c|c|c|c|} \hline
    \cellcolor{lightgray}1 & \cellcolor{lightgray}2 & \cellcolor{lightgray}3 & 6 & 9 & 8 & 5 & 7 & 4 \\ \hline
    \cellcolor{lightgray}3 & \cellcolor{lightgray}1 & \cellcolor{lightgray}2 & 7 & 8 & 9 & 4 & 6 & 5 \\ \hline
    \cellcolor{lightgray}2 & \cellcolor{lightgray}3 & \cellcolor{lightgray}1 & 9 & 6 & 4 & 8 & 5 & 7 \\ \hline
    7 & 8 & 6 & \cellcolor{lightgray!80}4 & \cellcolor{lightgray!80}5 & 3 & 9 & 1 & 2 \\ \hline
    8 & 9 & 7 & \cellcolor{lightgray!80}5 & \cellcolor{lightgray!80}4 & 1 & 3 & 2 & 6 \\ \hline
    9 & 5 & 8 & 2 & 1 & \cellcolor{lightgray!60}6 & \cellcolor{lightgray!60}7 & 4 & 3 \\ \hline
    5 & 4 & 9 & 8 & 2 & \cellcolor{lightgray!60}7 & \cellcolor{lightgray!60}6 & 3 & 1 \\ \hline
    4 & 6 & 5 & 3 & 7 & 2 & 1 & \cellcolor{lightgray!40}8 & \cellcolor{lightgray!40}9 \\ \hline
    6 & 7 & 4 & 1 & 3 & 5 & 2 & \cellcolor{lightgray!40}9 & \cellcolor{lightgray!40}8 \\ \hline
    \end{array}$
        \caption{An order 9 latin square with four disjoint subsquares}
        \label{fig: square example}
    \end{subfigure}
    \caption{Latin squares with disjoint subsquares}
    \end{figure}
    
\end{example}

Given orders $h_1,\dots, h_k$, a latin square with pairwise disjoint subsquares of those orders is known as an \emph{incomplete latin square} (ILS). In the case when $(h_1\dots h_k)$ is a partition of $n$, so $\sum_{i=1}^kh_i = n$, such a latin square is known as a \emph{realization}, a \emph{partitioned incomplete latin square} (PILS) or a \emph{holey transversal design} (HTD) of block size 3. The latin square in \Cref{fig: square example} is a realization of $(3,2,2,2)$.

The existence of a realization, denoted $\LS(h_1\dots h_k)$, is equivalent to a question originally asked by L. Fuchs \cite{keedwell2015latin} about quasigroups with disjoint subquasigroups. It is a partially solved problem, with much of the work done for families of partitions with either a limited number of parts or of distinct integers. The same question has also been asked for latin cubes and hypercubes, where an $m$-realization is an $m$-dimensional hypercube with $m$-dimensional pairwise disjoint subhypercubes \cite{donovan2025latin}.

Unless otherwise stated, we assume that the parts of the partition $(h_1\dots h_k)$ are written in non-increasing order and that $(h_1^{\alpha_1}h_2^{\alpha_2}\dots h_m^{\alpha_m})$ represents a partition with $\alpha_j$ copies of $h_j$. Also, we assume that all realizations are in \emph{normal form}, where the subsquares appear along the main diagonal and the $i^{th}$ subsquare is of order $h_i$. \Cref{fig: square example} shows a realization in normal form. Note that for any integer $a$, the set $a+[n] = \{a+1,a+2,\dots,a+n\}$.

A \emph{transversal} in a latin square of order $n$ is a set of $n$ cells, with all cells in distinct rows and columns, and containing distinct symbols. Two transversals in a latin square are disjoint if they do not share any cells. It has been shown that for $n\neq 2,6$ there exists a latin square of order $n$ with $n$ pairwise disjoint transversals, which is equivalent to a pair of orthogonal latin squares (see \cite{bose1960further}).

\subsection{Known results}

The work done on this problem has generally followed two approaches. The results usually limit either the number of subsquares or the number of distinct part sizes in a partition. In this work, we construct realizations following the second approach, but we begin here by giving a summary of the major known results.

For realizations with at most four subsquares, existence has been determined by Heinrich \cite{heinrich2006latin}.

\begin{theorem}
\label{thm: small n squares}
    Take a partition $(h_1h_2\dots h_k)$ of $n$ with $h_1\geq h_2\geq \dots\geq h_k > 0$. Then an $\LS(h_1h_2\dots h_k)$
    \begin{itemize}
        \item always exists when $k=1$;
        \item never exists when $k=2$;
        \item exists when $k=3$ if and only if $h_1=h_2=h_3$;
        \item exists when $k=4$ if and only if $h_1=h_2=h_3$, or $h_2=h_3=h_4$ with $h_1\leq 2h_4$.
    \end{itemize}
\end{theorem}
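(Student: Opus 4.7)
The plan is to handle each value of $k$ separately, using one uniform observation: in a realization in normal form, for each off-diagonal block $(i,j)$ (rows of subsquare $i$, columns of subsquare $j$), the row constraints force the entries to avoid $S_i$ and the column constraints force them to avoid $S_j$, so the block draws its symbols exclusively from $\bigcup_{l\neq i,j}S_l$, where $S_i$ denotes the symbol set of subsquare $i$. Almost every necessity argument below proceeds by counting under this restriction.

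The cases $k=1,2,3$ are quick. For $k=1$ the statement is trivial. For $k=2$, the observation forces block $(1,2)$ to use symbols from $\emptyset$, contradicting that the block has $h_1h_2\geq 1$ entries. For $k=3$, block $(1,2)$ is $h_1\times h_2$ with symbols drawn from the $h_3$-element set $S_3$; the column constraint gives $h_1\leq h_3$, which combined with $h_1\geq h_2\geq h_3$ yields $h_1=h_2=h_3$. Sufficiency for $k=3$ is then the Cayley table of $\mathbb{Z}_3\times G$ with $|G|=h$, whose three cosets of $\{0\}\times G$ form the required disjoint subsquares.

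The case $k=4$ is the main obstacle. The off-diagonal block bound gives $h_i\leq h_k+h_l$ for each $\{i,j,k,l\}=\{1,2,3,4\}$, in particular $h_1\leq h_3+h_4$. To rule out partitions outside the two allowed families, I would refine this with a per-symbol count in the top row-strip: for fixed $s\in S_l$ with $l\neq 1$, $s$ appears $h_1$ times across the rows of subsquare $1$, is forbidden from blocks $(1,1)$ and $(1,l)$, and so is spread across exactly two off-diagonal blocks with known per-column capacities $\min(h_1,h_j)$. Aggregating over all such $s$ yields tight occurrence counts in each off-diagonal block of the row-strip, and a case split on whether $h_1=h_3$ and whether $h_2=h_4$ should force the partition into either $(h,h,h,h_4)$ or $(h_1,h,h,h)$ with $h_1\leq 2h$. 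The obstruction $\LS(3,2,2,1)$ illustrates the flavor: the tight capacities force each symbol of $S_3$ to appear twice in block $(1,2)$ and once in block $(1,4)$, which pins the $S_4$-count in block $(1,3)$ at $1$ and leaves $5$ cells of that block for the two symbols of $S_2$, whose joint capacity is only $2h_3=4$, a contradiction. I expect the careful bookkeeping across the three off-diagonal blocks of a single row-strip, together with consistency checks against the column-strips, to be the technically most involved step.

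For sufficiency in the $k=4$ case I would exhibit explicit constructions for the two allowed families. When $h_1=h_2=h_3=h\geq h_4$, start from an $\LS(h,h,h)$ of order $3h$ and extend by $h_4$ new rows, columns, and symbols, placing the order-$h_4$ subsquare in the corner and completing the bordering cells by a prolongation argument. When $h_2=h_3=h_4=h$ with $h\leq h_1\leq 2h$, glue two $h\times h$ blocks along a shared transversal to build the order-$h_1$ subsquare and complete the remainder by composing with a standard $\LS(h,h,h,h)$ structure; the bound $h_1\leq 2h$ is exactly what permits the gluing to respect the latin condition. The rational outline square machinery developed later in the paper should give a uniform realization of both constructions.
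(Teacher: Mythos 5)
The paper does not prove this statement: it is quoted from Heinrich \cite{heinrich2006latin}, so your attempt can only be judged on its own merits. Your arguments for $k=1,2,3$ are complete and correct --- the observation that block $(i,j)$ draws its symbols only from $\bigcup_{l\neq i,j}S_l$ immediately kills $k=2$ and forces $h_1\leq h_3$ for $k=3$, and the Cayley table of $\mathbb{Z}_3\times G$ realizes $(h^3)$.

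The genuine gap is in the necessity direction for $k=4$. Your plan is to aggregate per-symbol capacities within a single row-strip (plus ``consistency checks against the column-strips'') and then case-split; this cannot work as described, because the partition $(3,3,2,2)$, which must be excluded, satisfies every row-strip and every column-strip constraint individually. Concretely, writing $y(1,j,\ell)$ for the number of occurrences of symbols of $S_\ell$ in block $(1,j)$, the assignment $y(1,3,2)=5$, $y(1,4,2)=4$, $y(1,2,3)=4$, $y(1,4,3)=2$, $y(1,2,4)=5$, $y(1,3,4)=1$ meets all block sizes, all per-symbol row totals $h_1h_\ell$, and all capacities $h_\ell\min(h_1,h_j)$ for row-strip $1$, and by the symmetry $1\leftrightarrow 2$, $3\leftrightarrow 4$ of this partition the same holds for every strip. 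Your worked example $(3,2,2,1)$ is genuinely tight inside strip $1$, but that is an accident of the instance. What is actually needed is a count across a two-by-two arrangement of strips: with $D=\{1,4\}$ and $H_D=\sum_{i\in D}h_i$, the number of cells whose row- and column-classes lie in $D$ but whose symbol-class lies in $\overline{D}$, plus the dual count, equals $H_DH_{\overline{D}}$, while the two counts are bounded by $H_D^2-h_1^2-h_4^2$ and $H_{\overline{D}}^2-h_2^2-h_3^2$ respectively; this is exactly \Cref{condition: squarecondition2} with $D=\{1,4\}$. That inequality rearranges to $(h_1-h_3)(h_2-h_4)+(h_1-h_2)(h_3-h_4)\leq 0$, and since both products are non-negative each must vanish, which together with $h_1\leq h_3+h_4$ yields precisely the stated dichotomy. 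Your proposed case split on $h_1=h_3$ and $h_2=h_4$ names the right quantities, but the inequality that produces them is global rather than strip-local, and nothing in your sketch supplies it. Separately, the sufficiency constructions for $k=4$ are only gestures: the prolongation of an $\LS(h,h,h)$ by $h_4$ new symbols requires $h_4$ pairwise disjoint transversals avoiding all three diagonal subsquares, whose existence you do not establish, and the ``gluing'' construction for $(h_1,h,h,h)$ with $h\leq h_1\leq 2h$ is not specified at a level where the latin and disjointness conditions can be verified.
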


Existence has been further determined for realizations with five subsquares in \cite{kemp2024latin}, and the conditions on existence are more complex than the smaller cases.

\begin{theorem}
\label{thm: 5 subsquares}
    An $\LS(h_1\dots h_5)$ exists if and only if 
    \begin{equation*}
        n^2 - \sum_{i=1}^5h_i^2\geq 3\left( \sum_{i\in D}h_i\right)\left(\sum_{j\in\overline{D}}h_j\right)
    \end{equation*}
    for all subsets $D\subseteq[5]$ where $|D| = 3$.
\end{theorem}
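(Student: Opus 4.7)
The plan is to establish necessity by a marginal-counting argument on the cells outside the five subsquares, and sufficiency by constructing each realization from a symmetric rational outline square (SROS), the framework developed later in the paper.

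For necessity, fix a $3$-subset $D\subseteq[5]$, write $\overline D=\{p,q\}$, and set $a=\sum_{i\in D}h_i$ and $b=h_p+h_q$. In any putative realization, let $N_{ijk}$ denote the number of outside-hole cells $(r,c)$ whose row lies in hole $i$, whose column lies in hole $j$, and whose symbol $L(r,c)$ lies in hole $k$; a standard hole argument forces $i$, $j$ and $k$ to be pairwise distinct. The three marginal identities
\begin{equation*}
\sum_{k}N_{ijk}=h_ih_j,\qquad \sum_{j}N_{ijk}=h_ih_k,\qquad \sum_{i}N_{ijk}=h_jh_k
\end{equation*}
(each with the free index distinct from the fixed pair), combined with $|\overline D|=2$, force every outside cell with column in $\overline D$ and symbol in $\overline D$ to have its row in $D$, and the column--symbol marginal then gives $2h_ph_q$ such cells. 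Applying the row--symbol marginal to row in $D$ and symbol in $\overline D$, the cells with row in $D$, column in $D$ and symbol in $\overline D$ must total $ab-2h_ph_q$. Subtracting from the row-in-$D$, column-in-$D$ outside count $a^2-\sum_{i\in D}h_i^2$ isolates the number $A$ of cells with all three hole-indices in $D$, giving
\begin{equation*}
A=\Bigl(a^2-\sum_{i\in D}h_i^2\Bigr)+\Bigl(b^2-\sum_{j\in\overline D}h_j^2\Bigr)-ab,
\end{equation*}
and $A\ge 0$ rearranges directly to $n^2-\sum_{i=1}^5h_i^2\ge 3ab$.

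For sufficiency I would construct a realization whenever all ten inequalities hold. The inequalities are precisely the non-negativity conditions that make the linear system for the $N_{ijk}$ feasible over the rationals, so one can produce a symmetric rational outline square of order five with part-weights $(h_1,\ldots,h_5)$ encoding these counts. I would then invoke the SROS-to-ILS filling machinery of the paper to convert the rational outline into an honest realization, using the symmetric form of the outline when several of the $h_i$ coincide.

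The main obstacle will be this sufficiency step, and specifically the passage from the rational outline to an integer realization: feasibility of the outline is immediate from the hypothesis, but the filling is delicate when an inequality is tight (so $A=0$ for some $D$ and certain $N_{ijk}$ are forced to vanish, constraining the underlying transversal-like decomposition) or when multiplicities among the $h_i$ interact with the symmetry type of the outline. I expect these boundary configurations to drive most of the casework, with \Cref{thm: small n squares} providing base cases when the partition degenerates.
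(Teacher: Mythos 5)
First, note that the paper does not prove this theorem: it is quoted verbatim from \cite{kemp2024latin}, so there is no in-paper proof to match. Judged on its own terms, your necessity argument is correct and complete. The pairwise-distinctness of the three hole indices of an outside cell, the three marginal identities for $N_{ijk}$, the forced location of cells with column and symbol both in $\overline{D}$, and the identity $2h_ph_q=b^2-\sum_{j\in\overline{D}}h_j^2$ all check out, and $A\ge 0$ does rearrange to $n^2-\sum_i h_i^2\ge 3ab$ via $a^2+b^2=n^2-2ab$. This is essentially a self-contained special case of \Cref{condition: squarecondition2} (Colbourn's counting condition), which is exactly how the surrounding literature obtains necessity.

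The sufficiency half, however, has a genuine gap, and it is not just the boundary casework you flag. You invoke ``the SROS-to-ILS filling machinery of the paper'' to pass from a rational outline square to an actual realization, but no such general machinery exists here or anywhere else: \Cref{lemma: always outline for multiple of P} only yields a realization of $(qh_1\dots qh_5)$ for some unspecified integer $q$, and the concluding remarks explicitly state that whether an $\ROS(h_1\dots h_k)$ suffices for an $\LS(h_1\dots h_k)$ is open. The one tool that does bridge the gap, \Cref{lemma: rational outline to full}, requires constructing the auxiliary integer outline array $B$ realizing the fractional parts, and that construction is the hard content (the paper carries it out only for $(h_1h_2h_3^m)$, via \Cref{lemma: fill L (a^mbc)} and a delicate transversal/edge-colouring argument). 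A lesser but real gap is your claim that the ten inequalities are ``precisely'' the feasibility conditions for the rational system in the $N_{ijk}$: that equivalence needs an argument (e.g.\ an explicit feasible point or LP duality), not just an assertion. So the overall architecture (necessity by counting, sufficiency via outline squares) is the right one, but as written the sufficiency direction reduces the theorem to a statement at least as hard as the theorem itself.
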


\Cref{thm: a^n square} and \Cref{thm: squaresatmost2} follow the second approach, and consider partitions with at most one and two distinct part sizes respectively. The first result is from D{\'e}nes and P{\'a}sztor \cite{denes1963some}. For the cases which are not covered in \Cref{thm: small n squares}, the results of Heinrich \cite{heinrich1982disjoint} and Kuhl et al.~\cite{kuhl2018latin} have been combined into \Cref{thm: squaresatmost2}.

\begin{theorem}
\label{thm: a^n square}
    For $k\geq 1$ and $a\geq 1$, an $\LS(a^k)$ exists if and only if $k\neq 2$.
\end{theorem}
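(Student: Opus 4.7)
\emph{Proof plan.} The necessity of $k\neq 2$ is immediate from the $k=2$ bullet of \Cref{thm: small n squares}. For sufficiency, the case $k=1$ is trivial (any order-$a$ latin square is its own subsquare) and the case $k=3$ is the $k=3$ bullet of \Cref{thm: small n squares} with $h_1=h_2=h_3=a$, so I may assume $k\geq 4$.

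The strategy is an \emph{inflation} construction. Let $M$ be any latin square of order $a$ and let $L$ be a latin square of order $k$ whose main diagonal entries $L(1,1),\dots,L(k,k)$ are a transversal (i.e.\ all $k$ distinct). Index the rows, columns and symbols of a new array $L'$ by $[k]\times[a]$, and define
\[
L'\bigl((i,r),(j,c)\bigr)=\bigl(L(i,j),\,M(r,c)\bigr).
\]
A routine check shows $L'$ is a latin square of order $ak$: fixing $(i,r)$ and varying $(j,c)$, the first coordinate is a permutation of $[k]$ (as $j$ varies, since row $i$ of $L$ is a permutation) and for each $j$ the second coordinate is a permutation of $[a]$ (as $c$ varies); the column case is symmetric. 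For each $(i,j)$ the $a\times a$ sub-array of $L'$ in row-block $i$ and column-block $j$ uses only symbols in $\{L(i,j)\}\times[a]$ and is itself a latin square on this set; in particular each of the $k$ diagonal blocks is an order-$a$ subsquare. The transversal hypothesis on $L$ makes the symbol sets $\{L(i,i)\}\times[a]$ pairwise disjoint, so the diagonal subsquares are pairwise disjoint, and $L'$ is an $\LS(a^k)$.

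What remains is exhibiting, for every $k\geq 4$, a latin square of order $k$ whose main diagonal is a transversal. For odd $k$ the Cayley table of $\mathbb{Z}_k$ works, since its diagonal is $\{2i\bmod k:i\in\mathbb{Z}_k\}=\mathbb{Z}_k$. For even $k\geq 4$ I use the classical fact that some latin square of order $k$ admits a transversal: for $k\neq 2,6$ this follows from a pair of MOLS of order $k$, and for $k=6$ an explicit example suffices; permuting rows, columns, and symbols then aligns the chosen transversal with the main diagonal. This is the main technical point---in particular the $k=6$ case, where no pair of MOLS of order $6$ exists---but since we require only one latin square with a single transversal rather than two orthogonal mates, the step is standard and causes no real difficulty.
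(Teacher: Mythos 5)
Your proof is correct. Note that the paper itself offers no proof of this statement---it is quoted as a known result of D\'enes and P\'asztor---so there is no internal argument to compare against; what you have written is essentially the classical construction for it. The necessity direction and the small cases are correctly dispatched via \Cref{thm: small n squares}, and the inflation (direct product) of an order-$a$ latin square with an order-$k$ latin square whose main diagonal is a transversal does verify as a latin square of order $ak$ whose $k$ diagonal blocks are pairwise disjoint order-$a$ subsquares: the transversal condition is exactly what makes the symbol sets $\{L(i,i)\}\times[a]$ disjoint. The one load-bearing external fact, that for every $k\neq 2$ some latin square of order $k$ has a transversal (equivalently, an idempotent latin square of order $k$ exists), is standard and you have handled the even case honestly: odd $k$ via $\mathbb{Z}_k$, even $k\neq 6$ via a pair of MOLS, and $k=6$ via an explicit example---indeed the paper itself exhibits an order-6 latin square with four disjoint transversals in \Cref{fig: order 6 square with 4 transversals}, so even that case needs nothing outside the paper. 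The only cosmetic redundancy is that your separate treatment of $k=3$ is unnecessary, since the inflation argument already covers all $k\neq 2$ once the transversal fact is in hand.
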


\begin{theorem}
\label{thm: squaresatmost2}
    For $a>b>0$ and $k>4$, an $\LS(a^ub^{k-u})$ exists if and only if $u\geq 3$, or $0< u < 3$ and $a\leq (k-2)b$.
\end{theorem}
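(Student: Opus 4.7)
My plan is to handle necessity and sufficiency separately, using the symmetric rational outline square (SROS) framework of the paper for the constructive direction. Necessity has content only for $u \in \{1, 2\}$.

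\textbf{Necessity.} Suppose an $\LS(a^u b^{k-u})$ in normal form exists with $u \in \{1, 2\}$. Fix an $a$-subsquare $S$ and any other subsquare $S'$ of size $h'$. The off-diagonal rectangle formed by the rows of $S$ and the columns of $S'$ cannot contain symbols of $S$ (which appear only in columns of $S$) nor symbols of $S'$ (which occupy each column of $S'$ only within the rows of $S'$), so its entries come from a pool of $n - a - h'$ symbols. A single column of this rectangle holds $a$ distinct such entries, forcing $a \leq n - a - h'$. Taking $S'$ to be the other $a$-subsquare when $u = 2$, and any $b$-subsquare when $u = 1$, both yield $a \leq (k-2)b$.

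\textbf{Sufficiency.} I would construct a SROS with part sizes $(a^u b^{k-u})$ and then invoke the paper's main realisation theorem to convert it into a latin square. A SROS assigns to each triple $(i, j, \ell)$ of pairwise distinct indices a non-negative rational $f(i, j, \ell)$, representing the fractional number of cells in block $(i, j)$ carrying a symbol from $S_\ell$, and satisfies
\begin{equation*}
  \sum_{\ell \ne i, j} f(i, j, \ell) = h_i h_j,
  \quad
  \sum_{j \ne i, \ell} f(i, j, \ell) = h_i h_\ell,
  \quad
  \sum_{i \ne j, \ell} f(i, j, \ell) = h_j h_\ell,
\end{equation*}
together with the appropriate row/column symmetry. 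Using the full $(u, k-u)$-part symmetry of the target partition, the unknowns collapse to a small number of class weights. For $u = 1$, letting $A = f(1, j, \ell)$ for small $j \ne \ell$ and $E = f(j, j', \ell)$ for pairwise distinct small $j, j', \ell$, direct substitution gives $A = \tfrac{ab}{k-2}$ and $E = \tfrac{b\bigl((k-2)b - a\bigr)}{(k-2)(k-3)}$; these are non-negative exactly when $a \leq (k-2)b$, and the denominators are non-zero since $k > 4$. The $u = 2$ case reduces by an identical computation to the same inequality, while for $u \geq 3$ the analogous linear system is feasible unconditionally, because the extra off-diagonal $a \times a$ blocks provide room to place the $b$-symbols without any additional constraint.

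\textbf{Main obstacle.} The difficult step is not producing the SROS but verifying that the paper's realisation theorem applies: one must check its non-degeneracy hypotheses, confirm that the rational entries refine to a valid integer configuration, and handle the extremal case $a = (k-2)b$ (where $E = 0$ forces a rigid structure and may require a separate direct construction). Small-$k$ partitions already settled in \Cref{thm: small n squares,thm: 5 subsquares} must be set aside, but the hypothesis $k > 4$ is precisely what excludes them.
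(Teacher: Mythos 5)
First, note that the paper does not prove this statement at all: it is quoted as a known result, assembled from Heinrich and from Kuhl et al., so there is no in-paper proof to match your argument against. Your necessity argument is correct --- it is the standard counting argument behind \Cref{condition: squarecondition1} (in the rectangle formed by the rows of an $a$-subsquare and the columns of a second subsquare, each column must be filled with $a$ distinct symbols drawn from the symbols of neither subsquare), and applying it with the two choices of $S'$ does give $a\leq(k-2)b$ for $u\in\{1,2\}$. Your class-symmetric SROS computation for $u=1$ is also arithmetically correct.

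The genuine gap is in sufficiency: there is no ``realisation theorem'' that converts a symmetric rational outline square into a latin square, neither in this paper nor elsewhere. The only general statement available is \Cref{lemma: always outline for multiple of P}, which converts an $\ROS(h_1\dots h_k)$ into an $\LS([qh_1]\dots[qh_k])$ for \emph{some} sufficiently large integer $q$ --- a realization of a scaled partition, not of $(a^ub^{k-u})$ itself. Whether the existence of an ROS implies the existence of a realization is explicitly left open in the paper's concluding remarks, and the rational-to-integer passage is precisely the hard step: \Cref{lemma: rational outline to full} requires constructing an auxiliary outline array $B$ that absorbs the fractional parts, and the paper devotes an entire section of bespoke constructions (transversals, equitable edge-colourings) to carry this out for the single family $(h_1h_2h_3^m)$. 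So the item you flag as the ``main obstacle'' is not a matter of checking non-degeneracy hypotheses of an existing theorem; it is the absence of the theorem you need. To prove sufficiency along the lines of the literature you would instead give direct constructions --- e.g.\ building $\LS(a^ub^{k-u})$ from smaller realizations, transversal designs, or group divisible designs as Heinrich and Kuhl et al.\ do --- rather than passing through rational outline squares.
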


Two conjectures are given by Colbourn in \cite{colbourn2018latin}, each concerning a family of partitions for which realizations are believed to exist unconditionally.

\begin{conjecture}
\label{conj: largest 3}
    If $k\geq 3$, then an $\LS(h_1^3h_4\dots h_k)$ exists.
\end{conjecture}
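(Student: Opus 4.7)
The plan is to induct on $k$, using the symmetric rational outline square ($\SROS$) machinery developed in the body of the paper. The base cases are already settled: $k=3$ is $\LS(h_1^3)$, which exists by \Cref{thm: a^n square}, and $k=4$ is $\LS(h_1^3 h_4)$, which falls under the first clause of \Cref{thm: small n squares} since $h_1=h_2=h_3$.

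For the inductive step I would start from a realization of $(h_1^3 h_4 \dots h_{k-1})$ guaranteed by the inductive hypothesis and attempt to insert a new disjoint subsquare of order $h_k$. Within the $\ROS$ framework this amounts to constructing an $\SROS$ whose hole structure matches $(h_1^3 h_4 \dots h_k)$, exploiting the three-fold symmetry afforded by the triple $h_1^3$: the three equal largest holes should play interchangeable roles so that a single construction can be rotated through them. I would then verify that the new necessary condition announced in the abstract is automatically satisfied whenever three parts are equal and largest---this should reduce to a short convexity argument, since the extremal three-subset $D$ analogous to the one in \Cref{thm: 5 subsquares} is typically the three largest parts, and in our setting those sum to only $3h_1$ while the complement is comparatively small---and finally invoke a sufficiency theorem that inflates the $\SROS$ to a bona fide latin square.

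The principal obstacle is that the tail $(h_4,\dots,h_k)$ is essentially unconstrained, whereas the main theorem of this paper handles only tails with at most two distinct sizes. To bridge this gap I would first try a peeling argument: amalgamate two small tail parts $h_{k-1}$ and $h_k$ into a single part of size $h_{k-1}+h_k$, apply the inductive hypothesis to the shorter partition, then locally resolve the merged hole into two disjoint subsquares using an auxiliary incomplete latin square of side $h_{k-1}+h_k$. Failing that, a group-theoretic construction built on $\mathbb{Z}_3$-symmetric starter--adder data, in which the three largest subsquares form an orbit under $\mathbb{Z}_3$, seems the most promising alternative route, though it would likely demand explicit case analysis for the first few values of $k$ before any uniform statement could be extracted.
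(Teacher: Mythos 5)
The statement you are trying to prove is \Cref{conj: largest 3}, which is an open conjecture of Colbourn; the paper does not prove it. What the paper actually establishes is only that an $\ROS(h_1^3h_4\dots h_k)$ exists for every such partition (via \Cref{lemma: All comb between range}, interpolating between the known realizations $\LS(h_1^ih_k^{k-i})$), and the authors are explicit that ``this does not prove the conjectures of course'' --- by \Cref{lemma: always outline for multiple of P} it only yields a realization of $(qh_1\dots qh_k)$ for some unspecified integer $q$. Your plan hinges on invoking ``a sufficiency theorem that inflates the $\SROS$ to a bona fide latin square.'' No such theorem exists: the paper's concluding remarks state only that ``it is possible that the existence of a symmetric rational outline square is also sufficient.'' Assuming it is precisely assuming the hard part of the problem.

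Your two fallback routes also fail concretely. The peeling argument --- merge $h_{k-1}$ and $h_k$ into one hole of size $h_{k-1}+h_k$, realize the shorter partition, then ``locally resolve the merged hole into two disjoint subsquares'' --- requires filling a latin square of order $h_{k-1}+h_k$ with two disjoint subsquares of orders $h_{k-1}$ and $h_k$, i.e.\ an $\LS(h_{k-1}h_k)$, which by \Cref{thm: small n squares} never exists for $k=2$. (This is the classical obstruction: two disjoint subsquares cannot partition a latin square.) The other inductive step, ``inserting'' a new disjoint subsquare of order $h_k$ into a realization of $(h_1^3h_4\dots h_{k-1})$, changes the order of the square from $n-h_k$ to $n$ and there is no mechanism in the paper or the literature cited for such an extension in general. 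Finally, verifying that the necessary condition of \Cref{condition: sufiĉa?} holds when the three largest parts are equal is worthwhile but irrelevant to existence, since the condition is only known to be necessary. If you want a provable statement along these lines, the correct target is the paper's theorem that an $\ROS(h_1^3h_4\dots h_k)$ exists, proved by combining \Cref{thm: a^n square}, \Cref{thm: squaresatmost2} and \Cref{lemma: All comb between range}.
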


The realizations in this first conjecture all have the largest three subsquares of the same size. \Cref{thm: small n squares} covers the cases when $k=3$ or $k=4$, and $k\in\{5,6\}$ was proven in \cite{colbourn2018latin}.

\begin{conjecture}
\label{conj: k-2}
    If $k\geq 5$ and $0<h_1\leq (k-2)h_k$, then an $\LS(h_1\dots h_k)$ exists.
\end{conjecture}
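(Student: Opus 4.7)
The plan is to attack \Cref{conj: k-2} by induction on the number $d$ of distinct part sizes among $h_1,\dots,h_k$, using the symmetric rational outline square (SROS) machinery developed elsewhere in this paper together with \Cref{thm: a^n square} and \Cref{thm: squaresatmost2} as base cases. The hope is that the inequality $h_1\leq(k-2)h_k$ carries enough slack to drive a recursive reduction to a partition with fewer distinct sizes at each step.

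The base cases handle $d\leq 2$ immediately. For $d=1$ the partition is $(a^k)$ with $k\geq 5$ and \Cref{thm: a^n square} applies; for $d=2$ we write $(a^u b^{k-u})$ with $a>b>0$, in which case $h_1\leq(k-2)h_k$ becomes $a\leq(k-2)b$, and \Cref{thm: squaresatmost2} settles both the $u<3$ and $u\geq 3$ ranges. For $d\geq 3$, I would construct a symmetric rational outline square whose shape encodes $(h_1,\dots,h_k)$ and then fill it using realizations of partitions with strictly fewer distinct sizes. A natural reduction is to pick an intermediate part $c$ and ``absorb'' it: either split $h_1$ as $(h_1-c)+c$ with $c$ matching some other part already present and glue the pieces back via SROS, or merge two smaller parts into a shared subsquare of a realization produced by the inductive hypothesis. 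One then verifies that the new partition still satisfies the bound $h_1'\leq(k'-2)h_{k'}'$ and invokes induction.

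The hard part will be showing that the global hypothesis $h_1\leq(k-2)h_k$ actually survives each local split or merge, together with the nonnegativity and symmetry constraints demanded by the SROS filling step. This bookkeeping is delicate because the bound couples the extremes of the partition to the number of parts, and an aggressive reduction can easily violate it even when each intermediate realization is individually fine. The special case treated in this paper — where all but two of the subsquares share a common size — sidesteps the iteration entirely: a single SROS reduction leaves a partition of the form $a^{k-2}$, handled in one stroke by \Cref{thm: a^n square}, so no chain of reductions is needed and the inequality only has to be checked once. Extending from this three-distinct-size regime to partitions with arbitrarily many distinct part sizes seems to require either a substantially stronger SROS-composition lemma or a new base construction not yet available, which is presumably why \Cref{conj: k-2} remains open in full generality.
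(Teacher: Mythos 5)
The statement you are attacking is \Cref{conj: k-2}, which is an \emph{open conjecture}: the paper does not prove it, and neither does your proposal. What the paper actually establishes is the strictly weaker assertion that an $\ROS(h_1\dots h_k)$ exists whenever $k\geq 5$ and $h_1\leq (k-2)h_k$. It does this by interpolation rather than by induction on distinct part sizes: each intermediate part $g$ with $h_k\leq g\leq h_1$ is realised as a convex combination of two neighbouring partitions via \Cref{lemma: exists in range} and \Cref{lemma: All comb between range}, starting from the known realizations of $(h_1^i h_k^{k-i})$ supplied by \Cref{thm: a^n square} and \Cref{thm: squaresatmost2}. By \Cref{lemma: always outline for multiple of P} this only yields a genuine realization of $(qh_1,\dots,qh_k)$ for some unspecified integer $q$, and the paper states explicitly that this ``does not prove the conjectures of course.''

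Your base cases $d\leq 2$ are correct, but the inductive step for $d\geq 3$ is where the whole difficulty lives and you have not supplied it: ``split $h_1$ and glue the pieces back via SROS'' and ``merge two smaller parts'' are not operations the paper's machinery supports at the level of integer realizations, only at the level of rational outline squares, where the paper's interpolation already does the job more cleanly. The genuine obstruction, which your sketch does not isolate, is the passage from a rational outline square back to an integer one. The paper manages this only for the family $(h_1h_2h_3^m)$, by flooring the ROS and building the leftover array $B$ of \Cref{lemma: rational outline to full} through transversals of latin squares and equitable edge-colourings; no analogue is known for a general partition with three or more distinct part sizes, and your proposal offers none. Your closing admission that the conjecture ``remains open in full generality'' is the correct assessment, but it means what you have written is a research plan with an acknowledged hole, not a proof.
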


The following weaker result was proven in \cite{colbourn2018latin} with some possible exceptions, which were constructed in \cite{kemp2024latin}.

\begin{theorem}
    If $k\geq 5$, $h_k>0$ and $h_1\leq 3h_k$, then an $\LS(h_1\dots h_k)$ exists.
\end{theorem}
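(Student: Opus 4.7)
The plan is to prove this by verifying that the hypothesis $h_1 \le 3h_k$ places each partition squarely within the reach of the earlier results summarised in \Cref{thm: small n squares,thm: 5 subsquares,thm: a^n square,thm: squaresatmost2}, combined with a recursive construction for partitions beyond their individual scope. The argument splits naturally according to the structure of the partition and the value of $k$.

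For partitions with at most two distinct part sizes, write $(h_1\dots h_k) = (a^u b^{k-u})$ with $a \ge b$. If $u \in \{0, k\}$ the conclusion follows from \Cref{thm: a^n square}, since $k \ge 5 > 2$. Otherwise \Cref{thm: squaresatmost2} applies: the bound $k \ge 5$ gives $(k-2) \ge 3$, so the hypothesis yields $a \le 3b \le (k-2)b$ and dispatches the case $0 < u < 3$, while the case $u \ge 3$ is unconditional. For $k = 5$ with three or more distinct part sizes, I would use \Cref{thm: 5 subsquares}. Expanding $n^2 - \sum h_i^2 = 2\sum_{i<j}h_i h_j$ and letting $A = \sum_{i \in D} h_i$, the required inequality for a 3-subset $D \subseteq [5]$ becomes
\[
2\sum_{\{i,j\}\subseteq D} h_i h_j \;+\; 2\sum_{\{i,j\}\subseteq \overline{D}} h_i h_j \;\ge\; \sum_{i \in D,\, j \in \overline{D}} h_i h_j.
\]
Normalising $h_k = 1$ so that each $h_i \in [1,3]$, the extremal case is $D = \{1,2,3\}$ (the indices of the three largest parts), and a direct case analysis confirms the inequality with room to spare.

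For $k \ge 6$ with three or more distinct part sizes I would proceed by a recursive construction built on the symmetric rational outline squares developed in this paper: produce a small outline-square scaffold that encodes the partition structure, then inflate its cells by realizations already secured from the $k = 5$ case or from \Cref{thm: squaresatmost2}. The main obstacle is that this recursion is not automatic --- merging two parts into a composite block, or peeling off one part and recursing on $k-1$, can worsen the ratio $h_1 / h_k$ enough to invalidate the base-case hypotheses, and a finite family of small exceptional partitions resists every general technique. Isolating that exceptional family and building explicit realizations for each of them, which is precisely the content of \cite{kemp2024latin}, is where the real work lies; with those cases resolved, the outline-square argument just sketched completes the proof.
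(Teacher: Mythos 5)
There is a genuine gap here, and also a point worth flagging about the target itself: the paper does not prove this theorem at all --- it is quoted as an external result, proven in \cite{colbourn2018latin} up to a finite list of possible exceptions that were then constructed in \cite{kemp2024latin}. Your proposal ultimately does the same thing: for $k\geq 6$ with three or more distinct part sizes you sketch a recursion and then concede that the exceptional partitions which defeat it are ``precisely the content of \cite{kemp2024latin}.'' That concession is where essentially all of the difficulty of the theorem lives, so the proposal is not a proof but a reduction to the very papers being cited. Moreover, the recursion you gesture at (inflating cells of an outline-square scaffold by smaller realizations) is not a construction this paper provides: the machinery here takes a rational outline square to a realization of a \emph{scalar multiple} $[qh_1]\dots[qh_k]$ (\Cref{lemma: always outline for multiple of P}), not of the original partition, so the sketch cannot be completed with the tools at hand.

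There is also a concrete error in your $k=5$ analysis. After normalising $h_5=1$, the binding subset $D$ in \Cref{thm: 5 subsquares} is not $\{1,2,3\}$ but the indices of the three \emph{smallest} parts, and the inequality can hold with equality rather than ``with room to spare.'' For example, for $(3,1,1,1,1)$ and $D=\{3,4,5\}$ one gets $n^2-\sum h_i^2 = 49-13=36$ and $3\bigl(\sum_{i\in D}h_i\bigr)\bigl(\sum_{j\in\overline{D}}h_j\bigr)=3\cdot 3\cdot 4=36$; similarly $(3,2,1,1,1)$ with $D=\{2,4,5\}$ is tight. The $k=5$ case can indeed be settled by verifying the condition of \Cref{thm: 5 subsquares} under $h_1\leq 3h_5$ (one reduces to inequalities such as $(s-1)(t-1)\geq 0$ after normalisation), but the case analysis must be aimed at the correct extremal configuration. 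Your treatment of partitions with at most two distinct part sizes via \Cref{thm: a^n square} and \Cref{thm: squaresatmost2} is correct.
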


\section{Outline squares}

Outline squares have been used in \cite{kemp2024latin,kuhl2019existence,kuhl2018latin} to construct realizations for a range of partitions. Outline squares and outline rectangles were introduced by Hilton in \cite{hilton1980reconstruction}, where outline rectangles are a generalisation of outline squares.

\begin{definition}
    Given a partition $P$ of $n$, where $P = (p_1\dots p_k)$, let $O$ be a $k\times k$ array of multisets, with elements from $[k]$. For $i,j\in [k]$, let $O(i,j)$ be the multiset of symbols in cell $(i,j)$ and let $|O(i,j)|$ be the number of symbols in the cell, including repetition.
    
    Then $O$ is an \emph{outline square} associated to $P$ if
    \begin{enumerate}[label=\textup{(\arabic*)}]
        \item $|O(i,j)| = p_ip_j$, for all $i,j\in[k]$;
        \item symbol $\ell\in[k]$ occurs $p_ip_\ell$ times in the row $(i,[k])$;
        \item symbol $\ell\in[k]$ occurs $p_jp_\ell$ times in the column $([k],j)$.
    \end{enumerate}
\end{definition}

Let $O_\ell(i,j)$ represent the number of copies of symbol $\ell$ in the multiset $O(i,j)$. It is obvious that each value $O_\ell(i,j)$ must be a non-negative integer. In \cite{kemp2024latin}, the idea of a \emph{rational outline square} was introduced, where each of these values can instead be a non-negative rational number.

\begin{definition}
    Let $O$ be the set of values $\{O_\ell(i,j)\mid i,j,\ell\in[k]\}$, where $O_\ell(i,j)$ is a non-negative rational number for all $i,j,\ell\in[k]$. Then $O$ forms a \emph{rational outline square} associated to a partition $P = (p_1\dots p_k)$ if
    \begin{enumerate}[label=\textup{(\arabic*)}]
        \item $\sum_{\ell\in[k]} O_\ell(i,j) = p_ip_j$, for all $i,j\in[k]$;
        \item $\sum_{j\in[k]} O_\ell(i,j) = p_ip_\ell$, for all $i,\ell\in[k]$;
        \item $\sum_{i\in[k]} O_\ell(i,j) = p_jp_\ell$, for all $j,\ell\in[k]$.
    \end{enumerate}
\end{definition}

\begin{example}
\label{ex: outline squares}

The array in \Cref{outline square} is an outline square associated to $P = (3^12^3)$. \Cref{rational outline rectangle} gives a rational outline square for the same partition, where $\ell:x$ in cell $(i,j)$ represents that $O_\ell(i,j)=x$.

    \begin{figure}[h]
    \centering
    \begin{subfigure}{0.38\textwidth}
        \centering
        $\arraycolsep=4pt\begin{array}{|ccc|cc|cc|cc|} \hline
        1 & 1 & 1 & 3 & 3 & 2 & 2 & 2 & 2 \\
        1 & 1 & 1 & 3 & 4 & 2 & 4 & 2 & 3 \\
        1 & 1 & 1 & 4 & 4 & 4 & 4 & 3 & 3 \\ \hline
        3 & 3 & 3 & 2 & 2 & 1 & 1 & 1 & 1 \\ 
        4 & 4 & 4 & 2 & 2 & 1 & 4 & 1 & 3 \\ \hline
        2 & 2 & 2 & 1 & 1 & 3 & 3 & 1 & 1 \\
        4 & 4 & 4 & 1 & 4 & 3 & 3 & 1 & 2 \\ \hline
        2 & 2 & 2 & 1 & 1 & 1 & 1 & 4 & 4 \\
        3 & 3 & 3 & 1 & 3 & 1 & 2 & 4 & 4 \\ \hline
        \end{array}$
        \caption{An outline square.}
        \label{outline square}
    \end{subfigure}
    \begin{subfigure}{0.38\textwidth}
        \centering
        $\arraycolsep=4pt\begin{array}{|c|c|c|c|} \hline
        \thead{1:9} & \thead{3:7/2\\ 4:5/2} & \thead{2:5/2\\ 4:7/2} & 
        \thead{2:7/2\\ 3:5/2} \\ \hline
        \thead{3:5/2\\ 4:7/2} & \thead{2:4} & \thead{1:7/2\\ 4:1/2} & 
        \thead{1:5/2\\ 3:3/2} \\ \hline
        \thead{2:7/2\\ 4:5/2} & \thead{1:5/2\\ 4:3/2} & \thead{3:4} & 
        \thead{1:7/2\\ 2:1/2} \\ \hline
        \thead{2:5/2\\ 3:7/2} & \thead{1:7/2\\ 3:1/2} & \thead{1:5/2\\ 2:3/2} & 
        \thead{4:4} \\ \hline
        \end{array}$
        \caption{A rational outline square.}
        \label{rational outline rectangle}
    \end{subfigure}
    \caption{Outline squares associated to $P = (3^12^3)$.}
    \end{figure}
\end{example}

Also introduced in \cite{kemp2024latin} is the idea of a \emph{symmetric} (rational) outline square, where $O_\ell(i,j) = O_c(a,b)$ for every permutation $(a,b,c)$ of $(i,j,\ell)$. It follows that the ordering of the arguments $i,j,\ell$ in $O_\ell(i,j)$ is not important, and so we use $O(i,j,\ell)$ to represent the value of $O_c(a,b)$, where $(a,b,c)$ is any permutation of $(i,j,\ell)$.

\begin{definition}
    Given a partition $P=(p_1\dots p_k)$, let $O = \{O(i,j,\ell)\mid \{i,j,\ell\} \text{ is a multiset of } [k]\}$ be a set of non-negative (rational numbers) integers. Then $O$ defines a symmetric (rational) outline square associated to $P$ if
    \begin{equation}
    \label{eq: symmetric outline square cond}
        \sum_{\ell\in [k]}O(i,j,\ell) = p_ip_j\text{ for all $i,j\in[k]$.}
    \end{equation}
\end{definition}

A (rational) outline square $O$ is said to \emph{respect} $P$ if $O$ is associated to $P$ and $O_i(i,i) = p_i^2$ for all $i\in[k]$. For a symmetric outline square, this implies that $O(i,i,i) = p_i^2$ and $O(i,i,j)=0$ for $j\neq i$.

The outline squares in \Cref{ex: outline squares} respect $P = (3^12^3)$, and the outline square in \Cref{outline square} is symmetric. To see how this relates to our problem of finding a realization, we return to latin squares.

    Given a latin square $L$ of order $n$, the \emph{reduction modulo $P$} of $L$, denoted $O$, is the $k\times k$ array of multisets obtained by amalgamating rows $(p_1 + \dots + p_{i-1}) + [p_i]$ for all $i\in[k]$, columns $(p_1 + \dots + p_{j-1}) + [p_j]$ for all $j\in[k]$, and symbols $(p_1 + \dots + p_{\ell-1}) + [p_\ell]$ for all $\ell\in[k]$.

    Thus, $O$ is a $k\times k$ array of multisets on the symbols of $[k]$. This array is clearly an outline square associated to $P$.

The outline square in \Cref{outline square} is a reduction modulo $P = (3^12^3)$ of the latin square in \Cref{fig: square example}.

If an outline square $O$ is a reduction modulo $P$ of a latin square $L$, then we say that $O$ \emph{lifts} to $L$.

It is clear that every reduction of a latin square is an outline square, and the following result by Hilton proves the reverse.

\begin{theorem}[\cite{hilton1980reconstruction}]
\label{thm: outline rectangle to square}
    Let $P$ be a partition of $n$. For every outline square $O$ associated to $P$, there is a latin square $L$ of order $n$ such that $O$ lifts to $L$.
\end{theorem}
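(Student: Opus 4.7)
The plan is to prove the theorem by induction on $n-k$, the amount of amalgamation in $P$. The base case is $k = n$: the partition is $(1^n)$, and the definitions of an outline square associated to $(1^n)$ coincide with those of a latin square of order $n$ (each cell holds a single symbol and each symbol appears once per row and per column), so one may simply take $L = O$.

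For the inductive step, assume $k < n$, so that some part $p_i \ge 2$. The core of the argument is a refinement lemma: $O$ is the reduction modulo $P$ of some outline square $O'$ associated to the refined partition
\[
P' = (p_1, \dots, p_{i-1},\, p_i - 1,\, 1,\, p_{i+1}, \dots, p_k),
\]
in the sense that re-merging the two new sub-parts of sizes $p_i - 1$ and $1$ gives back $O$. Since $P'$ has $k+1$ parts, $n - |P'| < n - k$, so the inductive hypothesis applies and yields a latin square $L$ lifting $O'$. Because reduction modulo a partition is transitive under refinement, $L$ also lifts $O$.

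Producing $O'$ amounts to splitting row-block $i$, column-block $i$, and symbol-block $i$ each into a piece of size $p_i - 1$ and a singleton, and then redistributing each multiset $O(a,b)$ among the newly created sub-blocks while preserving the three outline-square identities in $P'$. For each fixed symbol $\ell$, this becomes a bipartite integer-flow (equivalently, a bipartite degree-sequence realisation) problem whose marginals are encoded by the numbers $O_\ell(a,b)$. Existence of a solution follows from Hall's marriage theorem, or equivalently from K\"onig's theorem on proper edge-colourings of bipartite multigraphs applied to a multigraph whose edge multiplicities are read off from $O$.

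The main obstacle is verifying the Hall/flow-feasibility condition for each of these subproblems. The three defining identities of an outline square, namely $\sum_\ell O_\ell(i,j) = p_ip_j$, $\sum_j O_\ell(i,j) = p_ip_\ell$ and $\sum_i O_\ell(i,j) = p_jp_\ell$, supply exactly the balance between supply and demand needed; the verification is a matter of careful bookkeeping, splitting into cases according to which of $a$, $b$, $\ell$ equal $i$. Once the refinement lemma is secured, the induction completes the argument in $n - k$ steps.
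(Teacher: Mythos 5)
Your overall architecture --- induction on the amount of amalgamation, with a refinement lemma splitting one part $p_i$ into $(p_i-1,1)$ and invoking transitivity of reduction under refinement --- is exactly the shape of Hilton's argument (the paper cites this theorem rather than reproving it), and the base case $P=(1^n)$ is handled correctly. The problem lies in how you model and justify the refinement step itself.

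Splitting a single row off row-block $i$ requires choosing, from each multiset $O(i,j)$, a sub-multiset of size exactly $p_j$ (the cell-size identity for the new singleton row-block) whose union over $j$ contains exactly $p_\ell$ copies of each symbol $\ell$ (the row--symbol identity). These constraints are coupled across symbols: $|O'(i_2,j)|=p_j$ is a sum over all $\ell$, so the problem does not decompose ``for each fixed symbol $\ell$'' as you claim --- solving independent per-symbol flow problems would produce the right symbol counts in the new row while giving no control over how many entries land in each cell. The correct model is a single degree-constrained factor problem on the bipartite multigraph with vertex classes the column-blocks and the symbols and with $O_\ell(i,j)$ parallel edges joining $j$ to $\ell$: one needs a spanning sub-multigraph of degree $p_j$ at each $j$ and $p_\ell$ at each $\ell$. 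Its existence is not a direct consequence of Hall's or K\"onig's theorem as stated; it follows either from de~Werra's equitable edge-colouring theorem (quoted later in this paper as \Cref{thm: de werra}: colour with $p_i$ colours, and since every degree $p_ip_j$ and $p_ip_\ell$ is divisible by $p_i$, each colour class is such a factor), or from the observation that the uniform fractional point $x_{j\ell}=O_\ell(i,j)/p_i$ is feasible for the capacitated transportation polytope, which is integral. The analogous remark applies to the column split, while for the symbol split your per-symbol formulation is harmless because only symbol-class $i$ is being divided. Finally, note that after splitting the rows but before splitting the columns and symbols the intermediate object is an outline \emph{rectangle}, not an outline square associated to a single partition, so you must either perform all three splits inside one inductive step or widen the induction to outline rectangles, as Hilton does. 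With these repairs the proof goes through.
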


For any partition $P$, this idea can be used to construct a realization of $P$ from an outline square $O$, if $O$ respects $P$.

\begin{lemma}[\cite{kuhl2019existence}]
\label{lemma: outline to realization}
    For a partition $P = (h_1\dots h_k)$, an outline square which respects $P$ lifts to a realization of $P$.
\end{lemma}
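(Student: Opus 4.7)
The plan is to invoke Hilton's theorem (\Cref{thm: outline rectangle to square}) to produce a latin square $L$ of order $n = \sum_{i=1}^{k} h_i$ whose reduction modulo $P$ equals the given outline square $O$, and then check directly that the $k$ diagonal blocks of $L$ are the desired pairwise disjoint subsquares.

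To carry this out, I would fix $i \in [k]$ and write $R_i = (h_1 + \dots + h_{i-1}) + [h_i]$ for the rows amalgamated into class $i$, with $C_i$ and $S_i$ defined analogously for columns and symbols. Let $B_i$ denote the $h_i \times h_i$ diagonal block of $L$ with rows in $R_i$ and columns in $C_i$. By the definition of the reduction, the multiset of amalgamated symbols appearing in $B_i$ is precisely $O(i,i)$, whose total size is $h_i^2$. Since $O$ respects $P$, the hypothesis $O_i(i,i) = h_i^2$ forces $O_\ell(i,i) = 0$ for every $\ell \ne i$, so every entry of $B_i$ lies in the symbol class $S_i$.

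Next I would argue that $B_i$ is itself a latin square on $S_i$. Each row of $L$ restricted to the $h_i$ columns of $C_i$ contains $h_i$ distinct symbols of $L$; since they all lie in $S_i$ and $|S_i| = h_i$, they must coincide with $S_i$. The same argument handles columns, so $B_i$ is an order-$h_i$ subsquare of $L$. Finally, the subsquares $B_1, \dots, B_k$ occupy pairwise disjoint row classes, column classes, and symbol classes, so they are pairwise disjoint, and $L$ is a realization of $P$ in normal form.

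The main work is already done by Hilton's theorem, which supplies the lift. What remains is a short pigeonhole-style argument: the respect condition pins each diagonal cell of $O$ to use only one symbol class, and the latin property then promotes each diagonal block from ``entries drawn from $S_i$'' to ``a latin square on $S_i$.'' I do not anticipate any genuine obstacle.
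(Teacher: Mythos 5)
The paper cites this lemma from the literature without giving a proof, so there is nothing internal to compare against; your argument is the standard one and is correct. Combining Hilton's theorem with the observation that $O_i(i,i)=h_i^2$ confines each diagonal block to a single symbol class, after which the latin property of $L$ plus a pigeonhole count forces each block to be a subsquare, is exactly the intended reasoning, and the disjointness of the row, column, and symbol classes finishes it.
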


Taking \Cref{thm: outline rectangle to square} and \Cref{lemma: outline to realization} together, it is observed that the existence of an outline square which respects $P$ is sufficient to prove the existence of a realization of $P$.

With this connection between outline squares and realizations established, we now consider the relationship between realizations and symmetric rational outline squares.

Throughout, $\ROS(P)$ denotes a symmetric rational outline square which respects the partition $P$.

\begin{lemma}
\label{lemma: always symmetric solution}
    If an $\LS(h_1\dots h_k)$ exists, then an $\ROS(h_1\dots h_k)$ exists.
\end{lemma}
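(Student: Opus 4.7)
My plan is to obtain the symmetric rational outline square by $S_3$-averaging the integer outline square produced by the given realization. Explicitly, let $L$ be a realization of $P = (h_1,\dots,h_k)$ and let $O$ be its reduction modulo $P$, so $O_\ell(i,j)$ records the number of occurrences of amalgamated symbol $\ell$ in amalgamated cell $(i,j)$. Because $L$ is in normal form, the $i$-th diagonal subsquare forces $O_i(i,i) = h_i^2$ together with $O_j(i,i) = O_i(i,j) = O_i(j,i) = 0$ for every $j\neq i$: no symbol from block $i$ appears outside the $i$-th subsquare, and no symbol from any other block appears inside it.

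I would then define, for every triple $(i,j,\ell) \in [k]^3$,
\begin{equation*}
O'(i,j,\ell) \;=\; \tfrac{1}{6}\bigl( O_\ell(i,j) + O_\ell(j,i) + O_i(j,\ell) + O_i(\ell,j) + O_j(i,\ell) + O_j(\ell,i) \bigr),
\end{equation*}
i.e.\ the average of $O_c(a,b)$ over the six bijections assigning the multiset $\{i,j,\ell\}$ to the roles $(a,b,c)$. By construction this quantity is non-negative, rational, and invariant under every permutation of $(i,j,\ell)$, so it descends to a well-defined function on multisets of size three drawn from $[k]$, as required by the definition of a symmetric rational outline square.

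The verification then splits into two short checks. For the symmetric outline square condition \eqref{eq: symmetric outline square cond}, I sum $O'(i,j,\ell)$ over $\ell \in [k]$; each of the six summands collapses to $h_ih_j$ under one of the three outline-square axioms of $O$ (two by the cell-size axiom, two by the row-count axiom, two by the column-count axiom), so the average equals $h_ih_j$. For the condition that $O'$ respects $P$, the equality $O'(i,i,i) = O_i(i,i) = h_i^2$ is immediate, and $O'(i,i,j) = 0$ for $j\neq i$ follows because every summand vanishes by the zero entries noted in the first paragraph.

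The principal hurdle is purely bookkeeping: tracking the $S_3$-action on index triples carefully enough that the symmetrized sum genuinely collapses under each axiom to $h_ih_j$ and genuinely vanishes on triples of the form $(i,i,j)$. No combinatorial construction is required, which matches the soft, averaging character of this existence statement.
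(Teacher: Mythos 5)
Your proposal is correct and is essentially the paper's own proof: both symmetrize the reduction $O$ of the realization over the six orderings of $(i,j,\ell)$ and divide by $6$, then verify \eqref{eq: symmetric outline square cond} and the respecting condition. The only cosmetic difference is that you apply the averaging formula uniformly and deduce the diagonal values $O'(i,i,i)=h_i^2$ and $O'(i,i,j)=0$ from the fact that $O$ respects $P$, whereas the paper defines those cases explicitly; the two are equivalent.
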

\begin{proof}
    Obtain an outline square $O$ which respects $P = (h_1\dots h_k)$ by reducing the $\LS(h_1\dots h_k)$ modulo $P$. Construct a symmetric rational outline square $X$ by:
    $$X(i,j,\ell) = \begin{cases}
        h_i^2, & \text{if $i=j=\ell$,}\\
        \frac{1}{6}[O_\ell(i,j) + O_\ell(j,i) + O_i(j,\ell) + O_i(\ell,j) + O_j(i,\ell) + O_j(\ell,i)], & \text{if $i$, $j$ and $\ell$ are distinct,}\\
        0, & \text{otherwise.}
    \end{cases}$$

    Clearly, $X(i,i,i) = h_i^2$ as required to respect the partition, and by fixing any $i,j\in[k]$ where $i\neq j$, we have that
    \begin{align*}
        \sum_{\ell\in[k]} X(i,j,\ell) &= \frac{1}{6}\left[ \sum_{\ell=1}^k O_\ell(i,j) + \sum_{\ell=1}^k O_\ell(j,i) + \sum_{\ell=1}^k O_i(j,\ell) + \sum_{\ell=1}^k O_i(\ell,j) + \sum_{\ell=1}^k O_j(i,\ell) + \sum_{\ell=1}^k O_j(\ell,i) \right]\\
        &= \frac{1}{6}[6h_ih_j] = h_ih_j.
    \end{align*}

    Thus, $X$ forms a $\ROS(h_1\dots h_k)$.
\end{proof}

Given a rational outline square, there must exist some sufficiently large integer $q$ such that multiplying all of the values $X(i,j,\ell)$ by $q$ gives only integer values. Thus, we obtain the following lemma.

\begin{lemma}
\label{lemma: always outline for multiple of P}
    If an $\ROS(h_1\dots h_k)$ exists, then there exists an integer $q$ such that a symmetric $\LS([qh_1]\dots[qh_k])$ exists.
\end{lemma}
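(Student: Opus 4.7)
The plan is to clear denominators in the given symmetric rational outline square and then apply the lifting machinery already established in the paper. Let $X$ denote the assumed $\ROS(h_1,\dots,h_k)$. Since the collection $\{X(i,j,\ell):i,j,\ell\in[k]\}$ is a finite set of non-negative rationals, I can choose a positive integer $q$ (for example, the least common multiple of all the denominators appearing in $X$) so that $qX(i,j,\ell)\in\mathbb{Z}_{\geq 0}$ for every triple. Define
\begin{equation*}
    Y(i,j,\ell) := q^2\, X(i,j,\ell),
\end{equation*}
which is then a non-negative integer for every triple.

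Next I would verify that $Y$ is a symmetric integer outline square respecting the scaled partition $P' = (qh_1,\dots,qh_k)$. Symmetry in the three arguments is preserved by scalar multiplication, and the defining sum becomes
\begin{equation*}
    \sum_{\ell\in[k]} Y(i,j,\ell) \;=\; q^2\sum_{\ell\in[k]} X(i,j,\ell) \;=\; q^2 h_i h_j \;=\; (qh_i)(qh_j),
\end{equation*}
which is exactly \eqref{eq: symmetric outline square cond} for $P'$. The respect conditions also transfer directly: $Y(i,i,i) = q^2 h_i^2 = (qh_i)^2$ and $Y(i,i,j) = q^2\cdot 0 = 0$ for $j\neq i$.

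Finally, a symmetric integer outline square is in particular an outline square, so \Cref{thm: outline rectangle to square} together with \Cref{lemma: outline to realization} guarantees that $Y$ lifts to an $\LS(qh_1\dots qh_k)$; and because $Y$ is symmetric, the resulting realization inherits that symmetric structure, giving the desired symmetric $\LS([qh_1]\dots[qh_k])$. The only real subtlety is choosing the scaling exponent correctly: one might naively multiply by $q$, but the outline-square row-sums $p_ip_j$ scale \emph{quadratically} with the partition, which is why $q^2$, not $q$, is the right factor. Beyond this bookkeeping the argument is a direct corollary of the results already proved, so no genuine obstacle remains.
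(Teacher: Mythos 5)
Your proof is correct and takes essentially the same route as the paper, which disposes of this lemma with the one-line observation that a suitable integer $q$ clears all denominators, after which the resulting integer symmetric outline square respects the scaled partition and lifts via \Cref{thm: outline rectangle to square} and \Cref{lemma: outline to realization}. Your explicit point that the values must be scaled by $q^2$ rather than $q$ (so that the cell sums become $(qh_i)(qh_j)$) is a useful clarification of a detail the paper leaves implicit.
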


Combining \Cref{lemma: always symmetric solution} with \Cref{lemma: always outline for multiple of P} gives the following result.

\begin{corollary}
    If an $\LS(h_1\dots h_k)$ exists, then there exists an integer $q$ such that a symmetric $\LS([qh_1]\dots[qh_k])$ exists.
\end{corollary}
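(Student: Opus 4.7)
The plan is to obtain this corollary as a direct chaining of the two preceding lemmas, with essentially no additional work. Concretely, I would start from the hypothesised $\LS(h_1 \dots h_k)$ and push it through \Cref{lemma: always symmetric solution} to produce a symmetric rational outline square $X$ that respects $P = (h_1 \dots h_k)$. At this point the object of interest lives in the rational setting rather than the integer one, so the remaining task is to clear denominators.

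Next, I would invoke \Cref{lemma: always outline for multiple of P} applied to the $\ROS(h_1 \dots h_k)$ just constructed. This lemma produces a positive integer $q$ such that scaling every entry $X(i,j,\ell)$ by $q$ yields a symmetric integer outline square, which in turn lifts (by \Cref{thm: outline rectangle to square} combined with \Cref{lemma: outline to realization}, already absorbed into the statement of \Cref{lemma: always outline for multiple of P}) to a symmetric $\LS([qh_1] \dots [qh_k])$. Stringing these two steps together produces the required $q$ and realization, which is exactly the conclusion.

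There is no genuine obstacle here, since both ingredients are stated and proved just above. The only thing worth a sentence of care is that the integer $q$ guaranteed by \Cref{lemma: always outline for multiple of P} depends on the particular $\ROS(h_1 \dots h_k)$ chosen in the first step: one may take $q$ to be (any multiple of) the least common denominator of the finite set of rationals $\{X(i,j,\ell) : i,j,\ell \in [k]\}$, and the respect property $X(i,i,i) = h_i^2$ together with $X(i,i,j) = 0$ for $j \neq i$ carries over to the scaled object, so the lifted latin square is indeed a symmetric realization of $(qh_1, \dots, qh_k)$. Thus the proof is essentially one sentence: apply \Cref{lemma: always symmetric solution}, then apply \Cref{lemma: always outline for multiple of P}.
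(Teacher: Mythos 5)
Your proposal is correct and matches the paper exactly: the corollary is stated immediately after the sentence ``Combining \Cref{lemma: always symmetric solution} with \Cref{lemma: always outline for multiple of P} gives the following result,'' so the intended proof is precisely the two-step chaining you describe. Your added remark about taking $q$ to be a common denominator and checking that the respect property survives scaling is consistent with how \Cref{lemma: always outline for multiple of P} is justified in the text.
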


\section{Necessary conditions}

It can be seen in \Cref{thm: small n squares} that the complexity of the necessary and sufficient conditions increases as the number of subsquares increases. Two necessary conditions were proven by Colbourn \cite{colbourn2018latin} for an arbitrary number of subsquares, and these are given as \Cref{condition: squarecondition1} and \Cref{condition: squarecondition2}. It was shown in \cite{kemp2024latin} that the second condition alone is sufficient for $k=5$, however, it was established in \cite{colbourn2018latin} that further necessary conditions are required for larger values of $k$.

\begin{theorem}[\cite{colbourn2018latin}]
\label{condition: squarecondition1}
    If an $\LS(h_1h_2\dots h_k)$ exists, then $h_1\leq\sum_{i=3}^kh_i$.
\end{theorem}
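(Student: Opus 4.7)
The plan is to examine the off-diagonal block of the realization formed by the rows of the first subsquare and the columns of the second subsquare, and to show that its cells can only contain symbols belonging to subsquares $3,\dots,k$. A simple counting argument then yields the desired inequality.

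First I would assume the $\LS(h_1h_2\dots h_k)$ is in normal form, so the $t$-th subsquare occupies rows, columns, and symbols equal to $H_t := (h_1+\dots+h_{t-1})+[h_t]$. Consider the $h_1\times h_2$ rectangle $B_{1,2}$ consisting of rows $H_1=[h_1]$ and columns $H_2=h_1+[h_2]$. I would then establish two exclusions. The symbols of $H_1$ cannot appear in $B_{1,2}$: within any row $r\in H_1$, the symbols of $H_1$ already occupy columns $H_1$ (inside the first subsquare), so by the latin property they cannot reappear in columns $H_2$. Dually, the symbols of $H_2$ cannot appear in $B_{1,2}$: within any column $c\in H_2$, the symbols of $H_2$ already occupy rows $H_2$ (inside the second subsquare), so they cannot reappear in rows $H_1$. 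Hence every entry of $B_{1,2}$ is a symbol belonging to some $H_t$ with $t\geq 3$.

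Next I would count. Each symbol in a latin square appears at most once per row and at most once per column, so any fixed symbol occurs at most $\min(h_1,h_2)=h_2$ times inside the $h_1\times h_2$ rectangle $B_{1,2}$. Since $B_{1,2}$ has exactly $h_1h_2$ cells and only the $\sum_{i=3}^k h_i$ symbols in $H_3\cup\dots\cup H_k$ are eligible, we obtain
\begin{equation*}
h_1 h_2 \;\leq\; h_2\sum_{i=3}^k h_i,
\end{equation*}
and dividing by $h_2$ (which is positive) gives $h_1\leq\sum_{i=3}^k h_i$, as required.

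There is no substantive obstacle; the argument is a direct double-counting on a carefully chosen off-diagonal rectangle. The only point requiring care is justifying the two exclusions from the latin property, in particular the column-based exclusion for the symbols of the second subsquare, which relies on the fact that a subsquare is a complete latin square on its own row, column, and symbol sets.
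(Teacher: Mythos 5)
Your proof is correct. The two exclusion claims are justified exactly as you say: the first subsquare is a latin square on rows $[h_1]$, columns $[h_1]$ and symbols $[h_1]$, so every symbol of $H_1$ already has its unique occurrence in each row $r\in H_1$ inside columns $H_1$ and cannot recur in columns $H_2$; the dual argument on columns handles $H_2$. The count $h_1h_2\leq h_2\sum_{i=3}^kh_i$ then follows since each eligible symbol occurs at most $\min(h_1,h_2)=h_2$ times in the rectangle (using the paper's non-increasing ordering convention) and $h_2>0$. Note, however, that the paper does not prove this statement directly: it is quoted from Colbourn, and the paper's own route to it is as a degenerate case of its new necessary condition (\Cref{condition: sufiĉa?}), obtained by passing to a symmetric rational outline square and specializing $A=\{2\}$, $B=\{1\}$, $C=D=\emptyset$, which yields $0\geq h_2\bigl(h_1-\sum_{j\geq 3}h_j\bigr)$. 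Your argument is the classical, self-contained double count on the off-diagonal block and is more elementary; the paper's approach is less transparent for this single inequality but buys a much more general family of necessary conditions from one counting framework.
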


\begin{theorem}[\cite{colbourn2018latin}]
\label{condition: squarecondition2}
    If an $\LS(h_1h_2\dots h_k)$ exists, then $$n^2 - \sum_{i=1}^kh_i^2\geq 3\Bigg( \sum_{i\in D}h_i\Bigg)\Bigg(\sum_{j\in\overline{D}}h_j\Bigg)$$ for any $D\subseteq\{1,2,\dots,k\}$ and $\overline{D} = \{1,2,\dots,k\}\setminus D$.
\end{theorem}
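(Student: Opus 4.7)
The plan is a counting argument that classifies every cell of a realization of $P=(h_1h_2\dots h_k)$ by the types of its row, column, and symbol (relative to $D$). I work in normal form and call an index (row, column, or symbol) of \emph{type $D$} if it lies in a block indexed by some $i\in D$, of \emph{type $\overline{D}$} otherwise. Writing $A=\sum_{i\in D}h_i$ and $B=\sum_{j\in\overline{D}}h_j$ (so $n=A+B$), let $N_{XYZ}$, for $X,Y,Z\in\{D,\overline{D}\}$, denote the number of cells whose row, column, and symbol have types $X$, $Y$, $Z$ respectively.

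The central step exploits the three-fold row/column/symbol symmetry of latin squares. A row of type $D$ meets each type-$D$ symbol exactly once, so there are $A\cdot A=A^2$ cells with (row, symbol) both of type $D$; the identical count applies to (row, column) and to (column, symbol) both of type $D$. Hence
\begin{equation*}
N_{DDD}+N_{DD\overline{D}}=N_{DDD}+N_{D\overline{D}D}=N_{DDD}+N_{\overline{D}DD}=A^{2},
\end{equation*}
which forces the three mixed counts on the left to coincide; call their common value $u$. The same argument on the $\overline{D}$-side gives a common value $v$ for the three $\overline{D}$-dominated mixed counts, and summing all eight $N_{XYZ}$ to $n^2$ yields $u+v=AB$.

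To finish, every cell inside a subsquare indexed by $i\in D$ has its row, column, and symbol all of type $D$, contributing $\sum_{i\in D}h_i^{2}$ cells to $N_{DDD}$ (and symmetrically for $N_{\overline{D}\overline{D}\overline{D}}$). Since mixed-type cells lie outside every subsquare,
\begin{equation*}
n^{2}-\sum_{i=1}^{k}h_i^{2}=\left(N_{DDD}-\sum_{i\in D}h_i^{2}\right)+\left(N_{\overline{D}\overline{D}\overline{D}}-\sum_{j\in\overline{D}}h_j^{2}\right)+3(u+v)\geq 3AB,
\end{equation*}
which is the claimed bound. The only nontrivial step is the symmetry that collapses the six mixed counts into two values; it is purely a statement about the row/column/symbol interchangeability of latin squares and uses no property of the subsquare structure beyond the trivial bound $N_{DDD}\geq\sum_{i\in D}h_i^2$, which is why the argument works uniformly in $k$ and in $D$.
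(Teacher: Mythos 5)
Your argument is correct and complete: the three identities $N_{DDD}+N_{DD\overline{D}}=N_{DDD}+N_{D\overline{D}D}=N_{DDD}+N_{\overline{D}DD}=A^2$ do follow from the latin property, they force the three $D$-dominated mixed counts to share a value $u$ (and likewise $v$ on the other side), the total count gives $u+v=AB$, and the trivial bounds $N_{DDD}\geq\sum_{i\in D}h_i^2$ and $N_{\overline{D}\overline{D}\overline{D}}\geq\sum_{j\in\overline{D}}h_j^2$ finish the inequality. However, your route is genuinely different from the paper's. The paper does not prove this theorem directly at all: it cites it from Colbourn and then recovers it as the special case $C=D=\emptyset$, $A\cup B=E=[k]$ of the more general necessary condition in Theorem~\ref{condition: sufiĉa?}, whose proof first passes from the realization to a symmetric rational outline square (Lemma~\ref{lemma: always symmetric solution}) and then counts symbols block-by-block in that object. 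The two approaches exploit the same underlying row/column/symbol interchangeability --- in the paper it is baked in by the averaging $X(i,j,\ell)=\tfrac{1}{6}[O_\ell(i,j)+\cdots]$, in yours it appears as the collapse of the six mixed counts to two values --- but yours is self-contained and elementary, working directly on the cells of the latin square, whereas the paper's machinery buys the stronger condition with nonempty $C$, $D$ and proper $E\subset[k]$, which your two-type classification cannot reach without refining the partition of indices beyond $\{D,\overline{D}\}$.
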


We now use the existence of a rational outline square to improve the necessary conditions. In fact, we combine the known conditions into a single condition.

\begin{theorem}
\label{condition: sufiĉa?}
    If an $\LS(h_1\dots h_k)$ exists, then
    $$\left(\sum_{i\in A\cup C}h_i\right)^2 + \left(\sum_{i\in B\cup D}h_i\right)^2 - \sum_{i\in E}h_i^2 \geq \left(\sum_{i\in A\cup D}h_i\right)\left(\sum_{i\in B\cup C}h_i - \sum_{j\in \overline{E}}h_j\right),$$
    where $A$, $B$, $C$ and $D$ are pairwise disjoint subsets of $[k]$, $E = A\cup B\cup C\cup D$ and $\overline{E} = [k]\setminus E$.
\end{theorem}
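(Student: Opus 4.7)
The plan is to apply \Cref{lemma: always symmetric solution} to obtain a symmetric rational outline square $X$ that respects $P=(h_1,\dots,h_k)$, and then to write the difference between the two sides of the claimed inequality as a non-negative combination of the non-negative entries $X(i,j,\ell)$.

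Writing $w(S)=\sum_{i\in S}h_i$ for $S\subseteq[k]$, the row-sum identity $\sum_{\ell}X(i,j,\ell)=h_ih_j$ expands every product $w(S)w(T)$ appearing in the inequality as $\sum_{i\in S,\,j\in T,\,\ell\in[k]}X(i,j,\ell)$, and $\sum_{i\in E}h_i^2$ equals $\sum_{i\in E}X(i,i,i)$. After collecting, the quantity
$$\Sigma=w(A\cup C)^2+w(B\cup D)^2-\sum_{i\in E}h_i^2-w(A\cup D)\,w(B\cup C)+w(A\cup D)\,w(\overline{E})$$
becomes a signed sum of $X(i,j,\ell)$ values. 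The first step is to check that terms with at most two distinct indices contribute nothing to $\Sigma$: the contributions of $X(i,i,i)=h_i^2$ with $i\in E$ cancel against $-\sum_{i\in E}h_i^2$, using that $A\cup C$ and $B\cup D$ partition $E$, while the near-diagonal values $X(i,i,j)$ for $j\ne i$ vanish because $X$ respects $P$.

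What remains is a sum over ordered triples $(i,j,\ell)$ of pairwise distinct indices with coefficient
$$c(i,j,\ell)=\mathbf{1}[i,j\in A\cup C]+\mathbf{1}[i,j\in B\cup D]-\mathbf{1}[i\in A\cup D,\,j\in B\cup C]+\mathbf{1}[i\in A\cup D,\,j\in \overline{E}],$$
depending only on which of the classes $A,B,C,D,\overline{E}$ the indices $i$ and $j$ lie in. Exploiting the $S_3$-symmetry of $X$ in its three arguments, one collects the six permutations of each unordered triple $\{i,j,\ell\}$ and finds that the coefficient of $X(i,j,\ell)$ in $\Sigma$ becomes $s(t_i,t_j)+s(t_i,t_\ell)+s(t_j,t_\ell)$, where $t_i$ denotes the class of $i$ and $s(t,t')=c(t,t')+c(t',t)$ is a symmetric function on the five classes.

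The main obstacle is then a finite verification. One tabulates $s$ explicitly, finding entries in $\{-1,0,1,2\}$ with $s(A,B)=s(C,D)=-1$ the only negative values and $s(t,t)=2$ for $t\in\{A,B,C,D\}$, and then checks $s(t_1,t_2)+s(t_1,t_3)+s(t_2,t_3)\ge 0$ for every multiset of three classes. Triples with two or three matching types are handled by $s(t,t')\ge -1$ and $s(t,t)\ge 0$; for the ten triples of pairwise distinct classes one directly computes that the sum equals $0$ or $2$. With every symmetrized coefficient non-negative and every $X(i,j,\ell)\ge 0$, we conclude $\Sigma\ge 0$, which is the claimed inequality.
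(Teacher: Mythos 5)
Your argument is correct, and the finite check goes through as you claim: with $s$ as you define it, $s(A,B)=s(C,D)=-1$ are the only negative entries, $s(t,t)=2$ for $t\in\{A,B,C,D\}$, $s(\overline{E},\overline{E})=0$, and the symmetrized coefficient of every multiset of three classes is non-negative (it equals $0$ or $2$ for each of the ten triples of pairwise distinct classes, and the two-matching case is covered since $s(\overline{E},t')\geq 0$ for every $t'$). Your route is genuinely different from the paper's, though it consumes the same inputs. The paper also begins with the symmetric rational outline square supplied by \Cref{lemma: always symmetric solution}, but then argues by double counting: it counts the symbols required in the block of cells indexed by $(A\cup D)\times(B\cup C)$, subtracts the at most $\left(\sum_{i\in A\cup D}h_i\right)\left(\sum_{j\in\overline{E}}h_j\right)$ symbols that can come from $\overline{E}$, and bounds the symbols from $E$ by splitting into within-class counts (such as symbols of $A$ in rows of $A$) and cross-class counts, the latter controlled by permuting the arguments of $X$ using symmetry. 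You replace this with a mechanical verification: expand both sides of the inequality in the basis of entries $X(i,j,\ell)$ via the row-sum identity, note that the diagonal contributions cancel against $-\sum_{i\in E}h_i^2$ and the near-diagonal entries vanish because $X$ respects the partition, and reduce everything to a finite table of symmetrized coefficients over the five classes $A,B,C,D,\overline{E}$. The two proofs are equivalent in strength, but yours is more systematic: it amounts to an algorithm for deciding whether any candidate quadratic inequality in the $h_i$ is implied by the existence of a symmetric rational outline square, whereas the paper's counting argument offers more combinatorial intuition for why this particular inequality holds.
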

\begin{proof}
    By \Cref{lemma: always symmetric solution}, if such a latin square exists, then an $\ROS(h_1\dots h_k)$ exists also. We consider this outline square $X$.

    Observe that there are $(\sum_{i\in A\cup D}h_i)(\sum_{i\in B\cup C}h_i)$ symbols, including repetition, required across the cells $(i,j)$ where $i\in A\cup D$ and $j\in B\cup C$. Of these symbols, at most $(\sum_{i\in A\cup D}h_i)(\sum_{j\in \overline{E}}h_j)$ can be symbols in $\overline{E}$. The remaining symbols must be from $E$.

    We now count the number of symbols from $E$ in these cells. First, note that there are at most $(\sum_{i\in A}h_i)^2 - \sum_{i\in A}h_i^2$ symbols from $A$ in the rows of $A$, and likewise for the rows of $D$ and columns of $B$ and $C$. Thus, we must now consider the symbols appearing not within columns or rows from the same subset. The number of symbols from $A$ within rows of $D$ of these cells is given by
    $$\sum_{a\in A} \sum_{d\in D} \sum_{i\in B\cup C} X(a,d,i)$$
    and the number of symbols from $D$ in columns of $A$ is the same by symmetry. Similarly, the number of symbols from $B$ in columns of $C$ and the number of symbols from $C$ in columns of $B$ is given by
    $$\sum_{b\in B} \sum_{c\in C} \sum_{i\in A\cup D} X(b,c,i).$$

    Combining these, and using symmetry to change the order of the terms in the $X$ variables, we see that
    \begin{align*}
        2\sum_{a\in A} \sum_{d\in D} \sum_{i\in B\cup C} X(a,d,i) + 2\sum_{b\in B} \sum_{c\in C} \sum_{i\in A\cup D} X(b,c,i) &= 2 \sum_{a\in A} \sum_{c\in C} \sum_{i\in B\cup D} X(a,c,i) + 2 \sum_{b\in B} \sum_{d\in D} \sum_{i\in A\cup C} X(b,d,i)\\
        &\leq 2 \sum_{a\in A} \sum_{c\in C} \sum_{i\in E} X(a,c,i) + 2 \sum_{b\in B} \sum_{d\in D} \sum_{i\in E} X(b,d,i).
    \end{align*}
    The two terms in this last expression are the number of symbols in cells $(i,j)$ where $i\in A$ and $j\in C$ or $i\in B$ and $j\in D$ respectively. Thus, we have that
    $$2 \sum_{a\in A} \sum_{d\in D} \sum_{i\in B\cup C} X(a,d,i) + 2 \sum_{b\in B} \sum_{c\in C} \sum_{i\in A\cup D} X(b,c,i) \leq 2(\sum_{i\in A}h_i)(\sum_{i\in C}h_i) + 2(\sum_{i\in B}h_i)(\sum_{i\in D}h_i).$$

    Altogether, the number of possible entries is at least the number of required symbols, so
    \begin{align*}
    \begin{split}
        \left(\sum_{i\in A\cup D}h_i\right)\left(\sum_{i\in B\cup C}h_i - \sum_{j\in \overline{E}}h_j\right)\leq \left(\sum_{i\in A}h_i\right)^2 + \left(\sum_{i\in B}h_i\right)^2 + \left(\sum_{i\in C}h_i\right)^2 + \left(\sum_{i\in D}h_i\right)^2 - \sum_{i\in E}h_i^2\\ \phantom{1} + 2\left(\sum_{i\in A}h_i\right)\left(\sum_{i\in C}h_i\right) + 2\left(\sum_{i\in B}h_i\right)\left(\sum_{i\in D}h_i\right)
    \end{split}
    \end{align*}
    $$\left(\sum_{i\in A\cup D}h_i\right)\left(\sum_{i\in B\cup C}h_i - \sum_{j\in \overline{E}}h_j\right)\leq \left(\sum_{i\in A\cup C}h_i\right)^2 + \left(\sum_{i\in B\cup D}h_i\right)^2 - \sum_{i\in E}h_i^2.$$
\end{proof}

    The known necessary conditions are recovered from this new condition by setting $C = D = \emptyset$. Taking $A = \{2\}$ and $B = \{1\}$ gives the condition in \Cref{condition: squarecondition1}, and letting $A\cup B = E = [k]$ returns \Cref{condition: squarecondition2}.

Both additions that are made in this condition (taking $E$ to be a proper subset of $[k]$, and letting $C$ and $D$ be non-empty) are required parts of any general necessary condition. If $E\subset [k]$ but $C$ and $D$ are allowed to be empty, then partitions such as $(20^1 19^1 10^1 8^1 1^4)$ satisfy the necessary condition. However, with $C$ and $D$ non-empty, this same partition does not meet the condition. This partition also demonstrates how the previous necessary conditions are not sufficient for $k\geq 8$.

\section{Rational solutions to two conjectures}

In this section, we construct symmetric rational outline squares for all partitions in \Cref{conj: largest 3,conj: k-2}. First, we introduce a result which constructs a symmetric rational outline square for a partition which falls between two similar partitions.

\begin{lemma}
\label{lemma: exists in range}

    For $a,b\geq 0$, allow the parts of the partitions $P_1 = (h_1\dots h_{m-1} (h_m-a) h_{m+1}\dots h_k)$ and $P_2 = (h_1\dots h_{m-1} (h_m+b) h_{m+1}\dots h_k)$ to not be in non-increasing order.

    If an $\ROS(P_1)$ and an $\ROS(P_2)$ exist, then an $\ROS(h_1\dots h_{m-1} h_m h_{m+1}\dots h_k)$ exists also.
\end{lemma}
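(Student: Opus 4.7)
My plan is to construct the desired $\ROS$ as an affine (convex) combination of the two given ones, followed by a single local correction at the $(m,m,m)$ entry. Let $X$ denote an $\ROS(P_1)$ and $Y$ denote an $\ROS(P_2)$, and pick $\lambda = b/(a+b)$ (the case $a=b=0$ is trivial). Set
\[
Z'(i,j,\ell) = \lambda\, X(i,j,\ell) + (1-\lambda)\, Y(i,j,\ell).
\]
Then $Z'$ is symmetric and nonnegative. The reason I choose this $\lambda$ is that $\lambda(h_m-a)+(1-\lambda)(h_m+b)=h_m$, which will make the row/column marginals come out right in every cell except possibly $(m,m)$.

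Next I would verify condition \eqref{eq: symmetric outline square cond} for $Z'$. For any pair $(i,j)$ with $i,j\neq m$, both $\sum_\ell X(i,j,\ell)$ and $\sum_\ell Y(i,j,\ell)$ equal $h_ih_j$, so the marginal is already $h_ih_j=p_ip_j$. For $i=m$, $j\neq m$, one gets $\lambda(h_m-a)h_j+(1-\lambda)(h_m+b)h_j=h_mh_j=p_mp_j$ by the choice of $\lambda$. The only remaining case is $i=j=m$, where a short computation gives
\[
\sum_{\ell}Z'(m,m,\ell)=\lambda(h_m-a)^2+(1-\lambda)(h_m+b)^2 = h_m^2+ab,
\]
an excess of exactly $ab$ over the required value $p_m^2=h_m^2$.

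Now I use the respecting property of $X$ and $Y$: for $\ell\neq m$, $X(m,m,\ell)=Y(m,m,\ell)=0$, hence $Z'(m,m,\ell)=0$ as well, and the entire excess lives in the entry $Z'(m,m,m)=h_m^2+ab$. So I simply define
\[
Z(i,j,\ell) = \begin{cases} h_m^2 & \text{if } \{i,j,\ell\}=\{m,m,m\},\\ Z'(i,j,\ell) & \text{otherwise.}\end{cases}
\]
This preserves symmetry and nonnegativity, restores the marginal at $(m,m)$ to $h_m^2$, and leaves every other marginal untouched. The respecting conditions $Z(i,i,i)=h_i^2$ and $Z(i,i,j)=0$ for $j\neq i$ are inherited from $X$ and $Y$ except at $(m,m,m)$, which we have set by hand. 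Hence $Z$ is the desired $\ROS(h_1\dots h_{m-1}h_m h_{m+1}\dots h_k)$.

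I do not anticipate a serious obstacle: the one subtlety is recognising that the marginal at $(m,m)$ is the only one broken by the convex combination and that the overshoot is exactly concentrated in the single entry that we are allowed to adjust (because the respecting condition forces all other $(m,m,\ell)$ entries to vanish in both $X$ and $Y$). Once this is noticed, the correction step is essentially free.
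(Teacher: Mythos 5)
Your proposal is correct and is essentially the same construction as the paper's: the same convex combination with weight $\lambda = b/(a+b)$, with the $(m,m,m)$ entry reset to $h_m^2$ (the paper builds this reset directly into the case split of its definition rather than computing the excess $ab$ explicitly, but the argument is identical). Your observation that the respecting condition forces the entire overshoot into the single entry $Z'(m,m,m)$ is exactly the point the paper's verification relies on.
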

\begin{proof}

    If $a=0$ or $b=0$, then the $\ROS(P_1)$, or $\ROS(P_2)$ respectively, is already an $\ROS(h_1\dots h_{m-1} h_m h_{m+1}\dots h_k)$. Suppose that $a,b\geq 1$.

    Denote the $\ROS(h_1\dots h_{m-1} (h_m-a) h_{m+1}\dots h_k)$ and $\ROS(h_1\dots h_{m-1} (h_m+b) h_{m+1}\dots h_k)$ as $A$ and $B$ respectively. Construct a new symmetric rational outline square $C$ respecting $(h_1\dots h_k)$, by taking $$C(i,j,\ell) =\begin{cases}
        h_m^2, & \text{if $i=j=\ell=m$,}\\
        \frac{b}{a+b}\cdot A(i,j,\ell) + \frac{a}{a+b}\cdot B(i,j,\ell), & \text{otherwise.}
    \end{cases}$$

    Then for $i\neq m$ and $j\neq m$, it is clear that $\sum_{\ell\in[k]} C(i,j,\ell) = \frac{b}{a+b}\sum_{\ell=1}^k A(i,j,\ell) + \frac{a}{a+b}\sum_{\ell=1}^k B(i,j,\ell) = \frac{a+b}{a+b}h_ih_j = h_ih_j$.

    Also, for $j\neq m$, we get that $\sum_{\ell\in[k]} C(m,j,\ell) = \frac{b}{a+b}\sum_{\ell=1}^k A(m,j,\ell) + \frac{a}{a+b}\sum_{\ell=1}^k B(m,j,\ell) = \frac{b}{a+b}h_j(h_m-a) + \frac{a}{a+b}h_j(h_m+b) = h_jh_m$.

    Since $A$ and $B$ must respect their respective partitions, $A(i,i,i) = h_i^2$ and $B(i,i,i) = h_i^2$ for all $i\in[k]\setminus\{m\}$. Thus, $C$ also has this property for all $i\neq m$. Also, $A(m,m,i) = B(m,m,i) = 0$ for all $i\neq m$, so $C(m,m,i) = 0$ also. It follows that $\sum_{\ell\in[k]} C(m,m,\ell) = h_m^2$. Thus, $C$ is a symmetric rational outline square respecting $(h_1\dots h_k)$.
\end{proof}

Using \Cref{lemma: always symmetric solution}, we obtain the following corollary.

\begin{corollary}
\label{corollary: exists in range}
    With $P_1$ and $P_2$ as defined above, if an $\LS(P_1)$ and an $\LS(P_2)$ exist, then there also exists an $\ROS(h_1\dots h_{m-1} h_m h_{m+1}\dots h_k)$.
\end{corollary}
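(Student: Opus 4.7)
The corollary is essentially a direct combination of two results already proved in the excerpt, so my plan is short: I would simply chain \Cref{lemma: always symmetric solution} with \Cref{lemma: exists in range}.

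First, I would invoke \Cref{lemma: always symmetric solution} twice. By hypothesis, an $\LS(P_1)$ and an $\LS(P_2)$ both exist, where
\[P_1 = (h_1\dots h_{m-1} (h_m-a) h_{m+1}\dots h_k)\quad\text{and}\quad P_2 = (h_1\dots h_{m-1} (h_m+b) h_{m+1}\dots h_k).\]
Applying \Cref{lemma: always symmetric solution} to each of these latin squares yields an $\ROS(P_1)$ and an $\ROS(P_2)$, obtained by reducing each latin square modulo its own partition and then symmetrising via the averaging formula in the proof of that lemma. The only subtlety is that \Cref{lemma: always symmetric solution} is stated for partitions written in the usual form $(h_1\dots h_k)$, but an inspection of its proof shows that the argument never uses the ordering of the parts, only that the $i^\text{th}$ subsquare has order equal to the $i^\text{th}$ part; so it applies verbatim when $P_1$ and $P_2$ are not written in non-increasing order. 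I would make that observation explicitly.

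Second, with the two symmetric rational outline squares in hand, I would feed them into \Cref{lemma: exists in range}, whose hypotheses are exactly the existence of an $\ROS(P_1)$ and an $\ROS(P_2)$ for partitions of this interpolating form. The conclusion of that lemma is precisely an $\ROS(h_1\dots h_{m-1} h_m h_{m+1}\dots h_k)$, which is what we want.

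There is really no obstacle here; the only thing to be careful about is matching the conventions (ordering of parts, and the fact that \Cref{lemma: exists in range} permits the parts of $P_1$ and $P_2$ to be out of non-increasing order, which is exactly what we need when combining the symmetric outline squares produced from the two given realizations). The proof is therefore a two-line citation chain and could be written as: ``By \Cref{lemma: always symmetric solution}, an $\ROS(P_1)$ and an $\ROS(P_2)$ exist; by \Cref{lemma: exists in range}, this yields an $\ROS(h_1\dots h_{m-1} h_m h_{m+1}\dots h_k)$.''
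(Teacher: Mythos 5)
Your proof is correct and matches the paper's intent exactly: the paper derives this corollary by applying \Cref{lemma: always symmetric solution} to each of $\LS(P_1)$ and $\LS(P_2)$ and then invoking \Cref{lemma: exists in range}. Your additional remark about the ordering of parts being immaterial is a fair (and harmless) clarification.
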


Given a large set of partitions which can be realized, the method given in the previous lemma can be repeatedly applied to obtain outline squares that respect partitions.

\begin{lemma}
\label{lemma: All comb between range}
    If an $\LS(h_1^ih_k^{k-i})$ exists for all $u\leq i\leq k-1$, then an $\ROS(h_1^u g_1\dots g_{k-u-1} h_k)$ exists for any partition $(g_1\dots g_{k-u-1})$ with $h_1\geq g_1\geq\dots\geq g_{k-u-1}\geq h_k$.
\end{lemma}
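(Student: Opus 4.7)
The plan is to induct on the number $j$ of values interpolated into the sequence, using \Cref{lemma: exists in range} to introduce one at a time. To make the induction close, I will strengthen the statement to allow a variable number of additional copies of $h_1$ beyond the initial $u$. Specifically, I would prove by induction on $j$ that for each $0 \leq j \leq k-u-1$, each $t$ with $u \leq t \leq k-1-j$, and each choice $a_1, \dots, a_j \in [h_k, h_1]$, an $\ROS(h_1^t, a_1, \dots, a_j, h_k^{k-t-j})$ exists. The base case $j=0$ asks for an $\ROS(h_1^t h_k^{k-t})$ for $u \leq t \leq k-1$, which is immediate from the hypothesis together with \Cref{lemma: always symmetric solution}. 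The conclusion of the lemma then follows by taking $j = k-u-1$, $t = u$, and $(a_1,\dots,a_{k-u-1}) = (g_1,\dots,g_{k-u-1})$.

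For the step from $j$ to $j+1$, given $a_1, \dots, a_{j+1} \in [h_k, h_1]$ and $u \leq t \leq k-2-j$, I would apply \Cref{lemma: exists in range} at the position $m = t+j+1$. The two input partitions are
\[
P_1 = (h_1^t, a_1, \dots, a_j, h_k, h_k^{k-t-j-1})
\quad\text{and}\quad
P_2 = (h_1^t, a_1, \dots, a_j, h_1, h_k^{k-t-j-1}),
\]
which, after regrouping equal parts, coincide with $(h_1^t, a_1, \dots, a_j, h_k^{k-t-j})$ and $(h_1^{t+1}, a_1, \dots, a_j, h_k^{k-t-1-j})$ respectively. By IH$(j)$, both have $\ROS$s (using $t$ and $t+1$; the latter is valid since $t \leq k-2-j$ gives $t+1 \leq k-1-j$). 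Taking $a = a_{j+1} - h_k$ and $b = h_1 - a_{j+1}$, which are both nonnegative, \Cref{lemma: exists in range} produces an $\ROS$ of $(h_1^t, a_1, \dots, a_j, a_{j+1}, h_k^{k-t-j-1})$, which is IH$(j+1)$.

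The main subtlety lies in formulating the inductive hypothesis correctly. When the interpolation step sets the new entry to $h_1$, the corresponding partition $P_2$ must reduce to an IH case at the same level $j$ but with one more copy of $h_1$; if it instead required IH at $j+1$, the argument would be circular. This is why the parameter $t$ tracking the count of $h_1$'s is essential, and also why the full range $u \leq i \leq k-1$ in the hypothesis (not just $i = u$) is needed: each additional $\LS(h_1^i h_k^{k-i})$ with $i > u$ supplies a base case that is consumed when $P_2$ appears during the induction.
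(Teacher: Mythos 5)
Your proposal is correct and follows essentially the same route as the paper: an induction on the number of interpolated parts, with the count of $h_1$-copies carried as an auxiliary parameter, where each step applies \Cref{lemma: exists in range} to the two partitions obtained by setting the new part to $h_k$ and to $h_1$ (after permuting parts so these regroup into instances of the inductive hypothesis). Your explicit bookkeeping of the ranges of $t$ and $j$ makes precise the same point the paper handles with its condition $u\leq j\leq k-a-2$.
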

\begin{proof}
    For all $u\leq i\leq k-2$, using \Cref{corollary: exists in range} with the partitions $(h_1^{i}h^{k-i})$ and $(h_1^{i+1}h_k^{k-i-1})$, for which realizations exist by assumption, it is shown that an $\ROS(h_1^ig_1h_k^{k-i-1})$ exists for all $h_1\geq g_1\geq h_k$.

    Consider some $a\in[k-u-2]$ and any partition $(g_1\dots g_a)$ with $h_1\geq g_\ell\geq h_k$ for all $\ell\in [a]$. Suppose that for all $u\leq i\leq k-a-1$ there exists an $\ROS(h_1^i g_1\dots g_a h_k^{k-i-a})$. Then for any $u\leq j\leq k-a-2$, by permuting the parts of the partitions, there exists an $\ROS(h_1^j g_1\dots g_a h_k^{k-j-a})$ and an $\ROS(h_1^j g_1\dots g_a h_1 h_k^{k-j-a-1})$. Using \Cref{lemma: exists in range}, there exists an $\ROS(h_1^j g_1\dots g_a g_{a+1} h_k^{k-j-a-1})$ for all partitions $(g_1\dots g_{a+1})$ with $h_1\geq g_{a+1}\geq h_k$.

    The result follows by induction on $a$.
\end{proof}

We now construct symmetric rational outline squares for all cases of \Cref{conj: k-2} and \Cref{conj: largest 3}. This does not prove the conjectures of course, but considering \Cref{lemma: always outline for multiple of P}, it does show that there exists a realization for some sufficiently large multiple of each partition.

\begin{theorem}
    For any $k\geq 5$, if $h_1\leq (k-2)h_k$ then an $\ROS(h_1\dots h_k)$ exists.
\end{theorem}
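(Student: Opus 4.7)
The plan is to apply Lemma \ref{lemma: All comb between range} with $u=1$, taking $g_j = h_{j+1}$ for $1 \leq j \leq k-2$. The conclusion of that lemma then delivers an $\ROS(h_1, g_1, \ldots, g_{k-2}, h_k) = \ROS(h_1, h_2, \ldots, h_k)$, and the ordering requirement $h_1 \geq g_1 \geq \ldots \geq g_{k-2} \geq h_k$ is automatic from the convention that the parts are non-increasing. So the entire task reduces to verifying the lemma's hypothesis: that an $\LS(h_1^i h_k^{k-i})$ exists for every $1 \leq i \leq k-1$.

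To verify this, I would split on whether $h_1 = h_k$. If $h_1 = h_k$ then all parts are equal and each such latin square is of the form $\LS(h^k)$, which exists by Theorem \ref{thm: a^n square} since $k \geq 5 \neq 2$. Otherwise $h_1 > h_k$, and I would appeal to Theorem \ref{thm: squaresatmost2}, which applies since $k > 4$. For $i \geq 3$ existence is unconditional. For $i \in \{1,2\}$ the theorem requires exactly $h_1 \leq (k-2)h_k$, which is our hypothesis. Hence the hypothesis of Lemma \ref{lemma: All comb between range} holds in all cases, and the desired $\ROS$ follows immediately.

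The main obstacle is really no obstacle at all: the heavy machinery for averaging outline squares was built in the preceding lemmas, and the conditions for the two-part realizations $\LS(a^u b^{k-u})$ in Theorem \ref{thm: squaresatmost2} line up with the bound $h_1 \leq (k-2)h_k$ in the statement precisely because the binding constraint is at $u \in \{1,2\}$. The only subtlety is handling the equal-parts case separately, since Theorem \ref{thm: squaresatmost2} assumes $a > b$; once that is peeled off, the argument is mechanical.
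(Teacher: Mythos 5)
Your proof is correct and follows essentially the same route as the paper: invoke Theorem \ref{thm: a^n square} and Theorem \ref{thm: squaresatmost2} to obtain $\LS(h_1^i h_k^{k-i})$ for all $1\leq i\leq k-1$ (the bound $h_1\leq(k-2)h_k$ being exactly what is needed for $i\in\{1,2\}$), then apply Lemma \ref{lemma: All comb between range}. Your explicit separation of the $h_1=h_k$ case is a small but worthwhile clarification that the paper leaves implicit.
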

\begin{proof}
    Since $h_1\leq (k-2)h_k$, by \Cref{thm: a^n square} and \Cref{thm: squaresatmost2}, an $\LS(h_1^ih_k^{k-i})$ exists for all $1\leq i\leq k$. Thus by \Cref{lemma: All comb between range}, the result follows.
\end{proof}

\begin{theorem}
    If $h_1=h_2=h_3$, then an $\ROS(h_1^3h_4\dots h_k)$ exists for all $k\geq 3$.
\end{theorem}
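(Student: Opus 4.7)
The plan is to reduce the statement to a direct application of \Cref{lemma: All comb between range} with $u = 3$, after peeling off the small cases $k = 3, 4$ and the degenerate case $h_1 = h_k$ separately.

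First I would handle the small cases using known existence results and \Cref{lemma: always symmetric solution}. When $k = 3$, the partition is $(h_1^3)$, and \Cref{thm: a^n square} gives an $\LS(h_1^3)$; when $k = 4$, the partition is $(h_1^3 h_4)$, which is realized by the fourth bullet of \Cref{thm: small n squares}. In either case \Cref{lemma: always symmetric solution} upgrades the realization to an $\ROS$.

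For $k \geq 5$, the degenerate subcase $h_1 = h_k$ forces all parts equal, so $\LS(h_1^k)$ exists by \Cref{thm: a^n square} and we again invoke \Cref{lemma: always symmetric solution}. In the remaining subcase $h_1 > h_k$, the plan is to verify the hypothesis of \Cref{lemma: All comb between range} with $u = 3$: we need $\LS(h_1^i h_k^{k-i})$ for every $3 \leq i \leq k-1$. Since $i \geq 3$ and $k > 4$, \Cref{thm: squaresatmost2} (applied with $a = h_1$, $b = h_k$) delivers each of these realizations unconditionally. \Cref{lemma: All comb between range} then produces an $\ROS(h_1^3 g_1 \dots g_{k-4} h_k)$ for every non-increasing sequence $(g_1, \dots, g_{k-4})$ sandwiched between $h_1$ and $h_k$. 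Choosing $g_j = h_{j+3}$ yields the desired $\ROS(h_1^3 h_4 \dots h_k)$, using that the partition is written in non-increasing order so $h_1 \geq h_4 \geq \dots \geq h_{k-1} \geq h_k$.

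There is no genuine obstacle here; the argument is a bookkeeping exercise on top of \Cref{lemma: All comb between range}. The only mild care needed is in checking that the edge cases $k \in \{3,4\}$ and $h_1 = h_k$ are not covered by \Cref{lemma: All comb between range} (either because $k - u - 1 = 0$ so there are no $g_j$'s to interpolate, or because \Cref{thm: squaresatmost2} requires $a > b$), and in noting that \Cref{thm: squaresatmost2} places no arithmetic hypothesis on $h_1, h_k$ once $u = i \geq 3$, which is exactly why this conjecture's rational version is unconditional.
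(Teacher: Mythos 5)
Your proof is correct and follows essentially the same route as the paper: obtain the two-part realizations $\LS(h_1^i h_k^{k-i})$ from \Cref{thm: a^n square} and \Cref{thm: squaresatmost2} and interpolate with \Cref{lemma: All comb between range} (taking $u=3$), handling $k\in\{3,4\}$ via the known small-case realizations. Your explicit separation of the cases $k\in\{3,4\}$ and $h_1=h_k$ is slightly more careful bookkeeping than the paper's, but the argument is the same.
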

\begin{proof}
    For $k=3$ and $k=4$, \Cref{thm: small n squares} gives that an $\LS(h_1^3)$, an $\LS(h_1^4)$ and an $\LS(h_1^3h_k)$ exist. For $k\geq 5$, an $\LS(h_1^ih_k^{k-i})$ exists for all $3\leq i\leq k$ by \Cref{thm: a^n square} and \Cref{thm: squaresatmost2}. In either case, use \Cref{lemma: All comb between range}.
\end{proof}

\section{Similar outline squares}

This section introduces a new property for outline squares which makes it easier to construct symmetric rational outline squares for realizations with many subsquares of the same size.

\begin{definition}
    Let $X$ be an $\ROS(h_1\dots h_a^m h_{a+m}\dots h_k)$ and let $M = a-1+[m]$. Then $X$ is \emph{similar with respect to $h_a^m$} if there exists non-negative values $X'(i,j,a)$, $X'(i,a,a)$ and $X'(a,a,a)$ for all $i,j\notin M$, such that for all distinct $\alpha,\beta,\gamma\in M$
\begin{enumerate}[label=\textup{(\arabic*)}]
    \item $X(i,j,\alpha) = X'(i,j,a)$,
    \item $X(i,\alpha,\beta) = X'(i,a,a)$, and
    \item $X(\alpha,\beta,\gamma) = X'(a,a,a)$.
\end{enumerate}
\end{definition}

    With these variables, the entire outline square is determined by only $X(i,j,\ell)$, $X'(i,j,a)$, $X'(i,a,a)$ and $X'(a,a,a)$ for all distinct $i,j,\ell\notin M$.

\begin{example}
    The outline square in \Cref{outline square} is a similar with respect to $2^3$, with $X'(1,2,2) = 3$ and $X'(2,2,2) = 1$.
\end{example}

\sloppy We simplify the notation by letting a similar symmetric rational outline square be denoted by $\SROS(h_1\dots h_k,\{h_{\alpha_1}^{m_1},\dots,h_{\alpha_\ell}^{m_\ell}\})$, where the outline square is similar with respect to each $h_{\alpha_i}^{m_i}$ separately. As in that case, we prove that such an array can always be constructed from a realization or a symmetric rational outline square.

\begin{lemma}
    An $\ROS(h_1\dots h_a^m h_{a+m}\dots h_k)$ exists if and only if an $\SROS(h_1\dots h_a^m h_{a+m}\dots h_k,\{h_a^m\})$ exists.
\end{lemma}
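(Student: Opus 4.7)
The reverse direction is immediate from the definitions: every $\SROS(h_1\dots h_a^m h_{a+m}\dots h_k,\{h_a^m\})$ is in particular an $\ROS$ respecting the same partition, since requiring the extra similarity relations on the entries does not violate any axiom. So the content is in the forward implication.

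For the forward direction, my plan is to exploit the fact that the parts $h_a,h_{a+1},\dots,h_{a+m-1}$ are all equal, which means the partition is preserved by any permutation of the index set $M = a-1+[m]$. Given an $\ROS(h_1\dots h_a^m h_{a+m}\dots h_k)$ $X$, for each permutation $\sigma$ of $[k]$ that permutes $M$ and fixes $[k]\setminus M$ pointwise, define
\[
X^\sigma(i,j,\ell) = X(\sigma(i),\sigma(j),\sigma(\ell)).
\]
Because $\sigma$ preserves the partition (it only permutes equal-sized parts), $X^\sigma$ is itself a symmetric rational outline square respecting $(h_1\dots h_a^m h_{a+m}\dots h_k)$: symmetry in the three arguments is inherited from $X$, the defining sums \eqref{eq: symmetric outline square cond} are verified by relabelling the index of summation, and the respecting conditions $X^\sigma(\ell,\ell,\ell)=h_{\sigma(\ell)}^2=h_\ell^2$ and $X^\sigma(\ell,\ell,\ell')=0$ for $\ell'\neq\ell$ follow because $\sigma$ maps parts of a given size to parts of the same size.

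Next I would form the average
\[
\bar X \;=\; \frac{1}{m!}\sum_{\sigma} X^\sigma,
\]
where $\sigma$ ranges over the $m!$ permutations of $M$. As a convex combination of $\ROS$s respecting the same partition, $\bar X$ is itself an $\ROS$ respecting that partition. The key feature is that $\bar X$ is invariant under the action of the symmetric group on $M$: for any permutation $\tau$ of $M$, $\bar X(\tau(i),\tau(j),\tau(\ell))=\bar X(i,j,\ell)$, since $\tau$ acts on the summation by reindexing.

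The similarity property now follows by reading off this invariance case by case. Fix $i,j\notin M$. Then for any $\alpha\in M$, $\bar X(i,j,\alpha)$ is independent of which $\alpha\in M$ we pick (the symmetric group on $M$ acts transitively on $M$), so we may call this common value $X'(i,j,a)$. For any distinct $\alpha,\beta\in M$ and any $i\notin M$, $\bar X(i,\alpha,\beta)$ depends only on the fact that $\alpha,\beta$ are two distinct elements of $M$ (the action on pairs of distinct elements is transitive), so we set this common value to $X'(i,a,a)$. Finally, for any three distinct $\alpha,\beta,\gamma\in M$, $\bar X(\alpha,\beta,\gamma)$ depends only on that triple being distinct elements of $M$, and we call this $X'(a,a,a)$. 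Thus $\bar X$ is an $\SROS(h_1\dots h_a^m h_{a+m}\dots h_k,\{h_a^m\})$, completing the proof. No step is genuinely difficult; the only care needed is in verifying that averaging does not disturb the respecting conditions — which it does not, because each $X^\sigma$ already respects the partition.
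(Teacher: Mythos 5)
Your proof is correct and is essentially the paper's argument: the paper directly writes down the averaged values $Y'(i,j,a)=\frac{1}{m}\sum_{\alpha\in M}X(i,j,\alpha)$, etc., which are exactly what your group-averaged $\bar X$ evaluates to (using transitivity of $\sym(M)$ on distinct tuples and the respecting condition to kill the non-distinct terms). Your symmetrization over the $m!$ permutations is just a more conceptual packaging of the same averaging construction, and the reverse direction is trivial in both treatments.
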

\begin{proof}
    Let $X$ be an $\ROS(h_1\dots h_k)$ and let $M = a-1+[m]$. Then, construct a new $\ROS(h_1\dots h_k)$, $Y$, as follows.
    For $i,j,\ell\in [k]$, let
    $$Y(i,j,\ell) = \begin{cases}
        h_i^2, & \text{if $i=j=\ell$,}\\
        X(i,j,\ell), & \text{if $i,j,\ell\notin M$ are distinct,}\\
        Y'(i,j,a), & \text{if $i\neq j$, $i,j\notin M$ and $\ell\in M$,}\\
        Y'(i,a,a), & \text{if $j\neq \ell$, $i\notin M$ and $j,\ell\in M$,}\\
        Y'(a,a,a), & \text{if $i$, $j$ and $\ell$ are distinct, and $i,j,\ell\in M$,}\\
        0, & \text{otherwise,}
    \end{cases}$$
    where, for distinct $i,j\notin M$,
    $$Y'(i,j,a) = \frac{1}{m}\sum_{\alpha\in M} X(i,j,\alpha),$$
    $$Y'(i,a,a) = \frac{1}{m(m-1)}\sum_{\beta\in M}\sum_{\alpha\in M} X(i,\beta,\alpha),$$
    $$\text{and } Y'(a,a,a) = \frac{1}{m(m-1)(m-2)}\left(-mh_a^2+\sum_{\gamma\in M}\sum_{\beta\in M}\sum_{\alpha\in M} X(\gamma,\beta,\alpha)\right).$$

    It is straightforward to check that $Y$ forms a symmetric rational outline square respecting $(h_1\dots h_k)$.
\end{proof}

This method of constructing a similar outline square can be repeated to give an outline square which is similar in respect to more parts, since the construction preserves any existing similarity with respect to other parts of the partition.

\begin{corollary}
\label{cor: ROS iff SROS}
    An $\ROS(h_1\dots h_{\alpha_1}^{m_1}\dots h_{\alpha_\ell}^{m_\ell}\dots h_k)$ exists if and only if an $\SROS(h_1\dots h_k,\{h_{\alpha_1}^{m_1},\dots,h_{\alpha_\ell}^{m_\ell}\})$ exists.
\end{corollary}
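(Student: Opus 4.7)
The plan is to reduce the corollary to the preceding lemma by induction on the number $\ell$ of groups for which we want to establish similarity simultaneously. The reverse direction $(\Leftarrow)$ is immediate, since an $\SROS$ is in particular an $\ROS$. The content is in the forward direction, where the strategy is to apply the construction of the preceding lemma once for each group $h_{\alpha_s}^{m_s}$ in succession, and show that each application preserves the similarity already established by earlier applications, exactly as foreshadowed in the paragraph preceding the corollary.

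The base case $\ell=1$ is the preceding lemma itself. For the inductive step, I would assume an $\SROS(h_1\dots h_k,\{h_{\alpha_1}^{m_1},\dots,h_{\alpha_{\ell-1}}^{m_{\ell-1}}\})$, call it $X$, and apply the construction of the lemma with $M = M_\ell$, the index set of the $\ell$-th group, to produce a new array $Y$. By the lemma, $Y$ is an $\ROS$ respecting the same partition, and by design it is similar with respect to $h_{\alpha_\ell}^{m_\ell}$.

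The main (and really only) obstacle is verifying that $Y$ still enjoys similarity with respect to each earlier group $M_s$, $s<\ell$. This amounts to showing that the three averaging operations defining
\[
Y'(i,j,a_\ell)=\frac{1}{m_\ell}\sum_{\alpha\in M_\ell}X(i,j,\alpha),\quad Y'(i,a_\ell,a_\ell)=\frac{1}{m_\ell(m_\ell-1)}\sum_{\beta,\alpha\in M_\ell}X(i,\beta,\alpha),
\]
together with the analogous expression for $Y'(a_\ell,a_\ell,a_\ell)$, commute with the symmetry of $X$ under permutations within each earlier $M_s$. Indeed, if the free index $i$ (or $j$) lies in some $M_s$ with $s<\ell$, then by the similarity of $X$ with respect to $M_s$, the summand $X(i,j,\alpha)$ depends only on whether $i\in M_s$ and not on which representative of $M_s$ is chosen; the average over $\alpha\in M_\ell$ therefore inherits the same invariance. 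An identical case check handles the remaining combinations of free indices lying in earlier groups. Hence $Y$ is simultaneously similar with respect to $M_1,\dots,M_\ell$, which closes the induction and yields the required $\SROS$.
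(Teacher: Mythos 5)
Your proposal is correct and follows the same route as the paper, which simply remarks before the corollary that the lemma's averaging construction "preserves any existing similarity with respect to other parts of the partition" and iterates it; your induction on $\ell$ with the case check that averaging over $M_\ell$ commutes with invariance under permutations of each earlier $M_s$ is exactly the detail the paper leaves implicit.
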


Applying \Cref{lemma: always symmetric solution}, this can of course be extended to realizations.

\begin{corollary}
    If an $\LS(h_1\dots h_{\alpha_1}^{m_1}\dots h_{\alpha_\ell}^{m_\ell}\dots h_k)$ exists, then an $\SROS(h_1\dots h_k,\{h_{\alpha_1}^{m_1},\dots,h_{\alpha_\ell}^{m_\ell}\})$ exists.
\end{corollary}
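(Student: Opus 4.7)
The plan is to chain together the two results immediately preceding the statement. By \Cref{lemma: always symmetric solution}, the existence of an $\LS(h_1\dots h_{\alpha_1}^{m_1}\dots h_{\alpha_\ell}^{m_\ell}\dots h_k)$ directly yields the existence of an $\ROS(h_1\dots h_{\alpha_1}^{m_1}\dots h_{\alpha_\ell}^{m_\ell}\dots h_k)$ respecting the same partition. Then by \Cref{cor: ROS iff SROS}, this rational outline square can be transformed into an $\SROS(h_1\dots h_k,\{h_{\alpha_1}^{m_1},\dots,h_{\alpha_\ell}^{m_\ell}\})$.

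So the proof is essentially a one-line composition: apply \Cref{lemma: always symmetric solution} to the given realization to obtain the $\ROS$, then invoke the forward direction of \Cref{cor: ROS iff SROS} to obtain the similar symmetric rational outline square. No additional construction or verification is needed, since the two ingredients have already been established and the hypothesis that the $\ell$ repeated part-sizes appear with multiplicities $m_1,\dots,m_\ell$ in the partition is exactly what is required to apply the iff corollary.

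There is no significant obstacle — this corollary is deliberately stated as an immediate consequence of the two results, serving as a convenience for the later sections where realizations will be the natural starting data and similar symmetric rational outline squares the natural working object.
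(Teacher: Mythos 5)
Your proof is correct and matches the paper's own argument exactly: the paper likewise obtains the $\ROS$ via \Cref{lemma: always symmetric solution} and then applies \Cref{cor: ROS iff SROS} to get the $\SROS$. Nothing further is needed.
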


The similarity property reduces the number of free variables when constructing an outline square. This may simplify the construction. In fact, as the following lemma shows, we need only determine the values $X(i,j,\ell)$ for distinct $i,j,\ell\notin M$.

\begin{lemma}
\label{lemma: similar only needs distinct triples}
    Let $X$ be an $\SROS(h_1\dots h_a^m h_{a+m}\dots h_k, \{h_a^m\})$ with $m\geq 3$, and let $M = a-1+[m]$. Then
    \begin{enumerate}[label=\textup{(\arabic*)}]
        \item $X'(i,j,a) = \displaystyle\frac{1}{m}\left(h_ih_j - \sum_{\ell\notin M}X(i,j,\ell)\right)$,
        \item $\displaystyle X'(i,a,a) = \frac{1}{m-1}\left(h_ih_a - \frac{1}{m}\sum_{\ell\notin M\cup\{i\}} h_ih_\ell + \frac{2}{m}\sum_{\substack{j,\ell\notin M\cup\{i\}\\ j<\ell}} X(i,j,\ell)\right)$,
        \item $X'(a,a,a) = \displaystyle\frac{1}{m(m-1)(m-2)}\left(m(m-1)h_a^2 - mh_a\sum_{i\notin M}h_i + 2\sum_{\substack{i,j\notin M\\ i<j}}h_ih_j - 6\sum_{\substack{i,j,\ell\notin M\\ i<j<\ell}}X(i,j,\ell)\right)$.
    \end{enumerate}
\end{lemma}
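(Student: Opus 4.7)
The plan is to derive all three formulas from the single defining equation $\sum_{\ell\in[k]} X(i,j,\ell) = h_ih_j$, specialised to appropriate choices of $(i,j)$, by using the similarity identities in the definition together with the fact that $X(i,i,j) = 0$ whenever $i \neq j$ (from $X$ respecting the partition) to collapse most summands. Throughout, we use that $X$ is symmetric in its three arguments, so the similarity relations such as $X(i,j,\alpha) = X'(i,j,a)$ may be applied after any reordering.

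For part (1), fix $i,j \notin M$ with $i \neq j$ and split the sum $\sum_{\ell \in [k]} X(i,j,\ell) = h_ih_j$ into $\ell \notin M$ and $\ell \in M$. The first block contributes $\sum_{\ell \notin M} X(i,j,\ell)$ unchanged, while similarity forces each of the $m$ terms with $\ell \in M$ to equal $X'(i,j,a)$. Solving yields formula (1) directly.

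For part (2), fix $i \notin M$ and any $\alpha \in M$ and expand $\sum_{\ell \in [k]} X(i,\alpha,\ell) = h_ih_a$. The terms with $\ell \in M \setminus \{\alpha\}$ each equal $X'(i,a,a)$ (contributing $(m-1)X'(i,a,a)$), while the terms with $\ell = \alpha$ and $\ell = i$ both vanish because $X$ respects the partition. The remaining terms with $\ell \notin M \cup \{i\}$ rewrite as $X'(i,\ell,a)$ by symmetry and similarity. This yields $(m-1)X'(i,a,a) + \sum_{\ell \notin M \cup \{i\}} X'(i,\ell,a) = h_ih_a$. Substituting formula (1) for each $X'(i,\ell,a)$ and collapsing the resulting double sum $\sum_{\ell,j\notin M\cup\{i\}} X(i,\ell,j)$ over distinct indices to a sum over unordered pairs (picking up a factor of $2$ by symmetry, after checking that the diagonal terms $X(i,\ell,\ell) = X(i,\ell,i) = 0$ contribute nothing) delivers (2).

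Part (3) proceeds analogously from $\sum_{\ell \in [k]} X(\alpha, \beta, \ell) = h_a^2$ for distinct $\alpha, \beta \in M$. The two terms with $\ell \in \{\alpha, \beta\}$ vanish, the other $m - 2$ terms inside $M$ each equal $X'(a,a,a)$, and the terms outside $M$ each equal $X'(\ell,a,a)$ by symmetry and similarity, yielding $(m-2)X'(a,a,a) + \sum_{\ell \notin M} X'(\ell,a,a) = h_a^2$. Substituting formula (2) into this identity and then clearing denominators will give the claim. I expect the bookkeeping for the triple sum to be the main obstacle: the expression $\sum_{\ell \notin M} \sum_{i < j,\; i,j \notin M \cup \{\ell\}} X(\ell, i, j)$ must be recognised as counting each unordered triple $\{i,j,\ell\} \subseteq [k] \setminus M$ exactly three times, once for each choice of which element plays the role of the outer index, so that it collapses to $3 \sum_{i<j<\ell,\, i,j,\ell \notin M} X(i,j,\ell)$. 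Similarly the double sum $\sum_{\ell \notin M}\sum_{j\notin M\cup\{\ell\}} h_\ell h_j$ collapses to $2\sum_{i<j,\,i,j\notin M} h_ih_j$, and the single sum $\sum_{\ell\notin M} h_\ell h_a$ simply rearranges to $h_a\sum_{i\notin M} h_i$. Combining these pieces and multiplying through by $m(m-1)$ produces (3).
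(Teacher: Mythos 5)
Your proposal is correct and follows essentially the same route as the paper: split $\sum_{\ell\in[k]}X(\cdot,\cdot,\ell)=h_ih_j$ for the pairs $(i,j)$, $(i,\alpha)$ and $(\alpha,\beta)$, use similarity and the vanishing of the repeated-index terms, then back-substitute (1) into (2) and (2) into (3), with the factors $2$, $3$ and $6$ arising from the unordered-pair and unordered-triple overcounts exactly as you describe.
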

\begin{proof}
    By definition of a symmetric rational outline square, for distinct $i,j\notin M$, $h_ih_j = \sum_{\ell\in[k]} X(i,j,\ell)$ and so
    $$h_ih_j = \sum_{\ell=1}^k X(i,j,\ell) = \sum_{\ell\notin M}X(i,j,\ell) + \sum_{\alpha\in M}X(i,j,\alpha) = \sum_{\ell\notin M}X(i,j,\ell) + mX'(i,j,a).$$
    Thus, $X'(i,j,a) = \displaystyle\frac{1}{m}\left(h_ih_j - \sum_{\ell\notin M}X(i,j,\ell)\right)$.

    In a similar manner, we consider $i\notin M$ and $\alpha\in M$. Then
    $$h_ih_a = \sum_{\ell\notin M}X(i,\ell,\alpha) + \sum_{\beta\in M\setminus\{\alpha\}}X(i,\beta,\alpha) = \sum_{\ell\notin M\cup\{i\}}X'(i,\ell,a) + (m-1)X'(i,a,a).$$ Therefore, $\displaystyle X'(i,a,a) = \frac{1}{m-1}\left(h_ih_a - \frac{1}{m}\sum_{\ell\notin M\cup\{i\}} h_ih_\ell + \frac{2}{m}\sum_{\substack{j,\ell\notin M\cup\{i\}\\ j<\ell}} X(i,j,\ell)\right)$.

    Finally, for distinct $\alpha,\beta\in M$, $$h_a^2 = \sum_{\ell\notin M}X(\ell,\alpha,\beta) + \sum_{\gamma\in M\setminus\{\alpha,\beta\}}X(\alpha,\beta,\gamma) = \sum_{\ell\notin M} X'(\ell,a,a) + (m-2)X'(a,a,a).$$
    Thus, $X'(a,a,a) = \displaystyle\frac{1}{m(m-1)(m-2)}\left(m(m-1)h_a^2 - mh_a\sum_{i\notin M}h_i + 2\sum_{\substack{i,j\notin M\\ i<j}}h_ih_j - 6\sum_{\substack{i,j,\ell\notin M\\ i<j<\ell}}X(i,j,\ell)\right)$.
\end{proof}

In the following section we use this result to construct similar symmetric rational outline squares for a few different families of partitions.

\section{Partitions with many parts of the same size}

This section focuses on constructions of similar symmetric rational outline squares for different families of partitions: $(h_1h_2h_3^m)$, $(h_1h_2h_3h_4^m)$ and $(h_1h_2^ah_3^b)$, where the parts are not necessarily in non-increasing order.

If $m\leq 2$, then the existence of an $\LS(h_1h_2h_3^m)$ or an $\LS(h_1h_2h_3h_4^m)$ has already been determined in \Cref{thm: small n squares,thm: 5 subsquares}. In the third case, if either $a<3$ or $b<3$, then the partition is in the form of $(h_1h_2h_3^m)$ or $(h_1h_2h_3h_4^m)$. Thus, we only consider $m,a,b\geq 3$. 

Rational outline squares can be constructed fairly easily for the first two families by using \Cref{lemma: similar only needs distinct triples}.

\begin{theorem}
\label{thm: h_1h_2h_3^m rational}
    A $\ROS(h_1h_2h_3^m)$ exists for $m\geq 3$ if and only if $h_1\leq mh_3$, $h_2\leq mh_3$, and $h_3^2 - \frac{h_3}{m-1}(h_1+h_2) + \frac{2}{m(m-1)}h_1h_2 \geq 0$.
\end{theorem}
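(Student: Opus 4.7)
The plan is to apply Corollary \ref{cor: ROS iff SROS}, which reduces the existence of an $\ROS(h_1h_2h_3^m)$ to the existence of an $\SROS(h_1h_2h_3^m, \{h_3^m\})$. Under the natural indexing, $M = \{3, \ldots, m+2\}$ and $a = 3$, so only two indices (namely $1$ and $2$) lie outside $M$. This is the key observation driving everything: there are \emph{no} distinct triples $i,j,\ell \notin M$, so the ``free'' variables $X(i,j,\ell)$ of Lemma \ref{lemma: similar only needs distinct triples} form an empty set. Consequently, the three quantities $X'(1,2,3)$, $X'(i,3,3)$ for $i \in \{1,2\}$, and $X'(3,3,3)$ are \emph{forced} by that lemma's formulas, where every sum over distinct-triple data contributes zero.

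Next I would substitute into those formulas and simplify. The computations are routine and produce closed forms, namely $X'(1,2,3) = h_1h_2/m$, $X'(i,3,3) = h_i(mh_3 - h_{3-i})/(m(m-1))$ for $i \in \{1,2\}$, and
\[
X'(3,3,3) = \frac{1}{m-2}\left( h_3^2 - \frac{h_3(h_1+h_2)}{m-1} + \frac{2h_1h_2}{m(m-1)} \right).
\]
For the sufficiency direction, once these four values are non-negative, extending them to a full similar array via the similarity structure produces a valid $\SROS(h_1h_2h_3^m, \{h_3^m\})$: the symmetric-outline-square identity \eqref{eq: symmetric outline square cond} holds automatically because the formulas of Lemma \ref{lemma: similar only needs distinct triples} were derived precisely by enforcing it. For the necessity direction, the forced uniqueness above shows that any such SROS (equivalently, any $\ROS(h_1h_2h_3^m)$) must have exactly these $X'$-values, and the required non-negativity is therefore a necessary condition.

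It then remains only to read off the three inequalities. The value $X'(1,2,3) \geq 0$ holds unconditionally; $X'(1,3,3) \geq 0$ and $X'(2,3,3) \geq 0$ recover $h_2 \leq mh_3$ and $h_1 \leq mh_3$ respectively; and since $m \geq 3$ gives $m - 2 > 0$, the condition $X'(3,3,3) \geq 0$ is exactly the third inequality stated in the theorem.

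I expect the only mildly delicate step to be the sufficiency verification, i.e.\ confirming that once the forced $X'$ values are substituted back and the array is extended by similarity, the row/column-sum identity really is satisfied for every pair $(i,j)$ — including the mixed cases such as $i = 1$, $j \in M$. This amounts to a small number of short algebraic identities (e.g.\ $X'(1,2,3) + (m-1)X'(1,3,3) = h_1h_3$) rather than any genuine structural difficulty. Beyond these sanity checks, the proof is essentially a single substitution followed by reading off sign conditions.
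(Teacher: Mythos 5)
Your proposal is correct and takes essentially the same approach as the paper: the sufficiency argument is identical (no distinct triples lie outside $M$, so \Cref{lemma: similar only needs distinct triples} forces the four $X'$ values, whose closed forms you compute correctly, and their non-negativity is exactly the three stated inequalities). The only divergence is in the necessity direction, where the paper invokes the general condition of \Cref{condition: sufiĉa?} with particular choices of $A$ and $B$, whereas you derive necessity directly from the forced uniqueness of the $\SROS$ values via \Cref{cor: ROS iff SROS}; both are valid, and your version is self-contained.
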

\begin{proof}
    All necessary conditions come from \Cref{condition: sufiĉa?}. For each condition, set $C = D = \emptyset$ and take $A = \{2\}$ and $B = \{1\}$, $A = \{1\}$ and $B = \{2\}$, or $A = \{1,2\}$ and $B = [k]\setminus A$.
    
    By \Cref{lemma: similar only needs distinct triples}, when constructing $X$ to be an $\SROS(h_1h_2h_3^m,\{h_3^m\})$, we need only consider the variables $X(i,j,\ell)$ where $i$, $j$ and $\ell$ are distinct values from $\{1,2\}$. As there are no such variables, the solution is predetermined.

    Thus, $X'(1,2,3) = \frac{1}{m}h_1h_2$, $X'(1,3,3) = \frac{1}{m-1}(h_1h_3 - \frac{1}{m}h_1h_2)$, $X'(2,3,3) = \frac{1}{m-1}(h_2h_3 - \frac{1}{m}h_1h_2)$, and $X'(3,3,3) = \frac{1}{m-2}[h_3^2 - \frac{1}{m-1}h_3(h_1+h_2) + \frac{2}{m(m-1)}h_1h_2]$. When the assumed inequalities are satisfied, these are all non-negative.
\end{proof}

\begin{theorem}
    Take $h_1\geq h_2\geq h_3$. For $m\geq 3$, an $\ROS(h_1 h_2 h_3 h_4^m)$ exists if and only if the following are satisfied:
    \begin{enumerate}[label=\textup{(\arabic*)}]
        \item $n^2 - \sum_{i=1}^k h_i^2\geq 3mh_4(h_1+h_2+h_3)$,
        \item $n^2 - \sum_{i=1}^k h_i^2\geq 3(mh_4 + h_j)(n-mh_4-h_j)$ for all $j\in[3]$,
        \item $h_1(h_2+h_3)\leq mh_1h_4 + 2h_2h_3$, and
        \item $h_4 \geq \frac{1}{m}(h_1-h_3)$.
    \end{enumerate}
\end{theorem}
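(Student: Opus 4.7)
The approach is to reduce to constructing a similar symmetric rational outline square and to parametrise it by a single variable. By \Cref{cor: ROS iff SROS}, an $\ROS(h_1h_2h_3h_4^m)$ exists if and only if an $\SROS(h_1h_2h_3h_4^m,\{h_4^m\})$ exists. Applying \Cref{lemma: similar only needs distinct triples} with $M=\{4,\dots,m+3\}$ so that $[k]\setminus M=\{1,2,3\}$, the only free value on a distinct triple in $[k]\setminus M$ is $t:=X(1,2,3)$, since $\{1,2,3\}$ admits exactly one such triple. Every remaining entry $X'(i,j,4)$, $X'(i,4,4)$, $X'(4,4,4)$ is then an explicit linear function of $t$ via the formulas in that lemma.

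I would next translate the non-negativity of each entry into a linear inequality on $t$. The values $X'(i,j,4)=(h_ih_j-t)/m$ give $t\leq h_ih_j$, with the tightest bound being $t\leq h_2h_3$ under $h_1\geq h_2\geq h_3$. Writing $f_i := h_i(h_j+h_\ell-mh_4)$ for $\{i,j,\ell\}=\{1,2,3\}$, non-negativity of $X'(i,4,4)$ becomes $2t\geq f_i$. Finally, $X'(4,4,4)\geq 0$ rearranges, using $n=h_1+h_2+h_3+mh_4$, to the single upper bound
\begin{equation*}
6t \;\leq\; 2(h_1h_2+h_1h_3+h_2h_3) - mh_4(h_1+h_2+h_3) + m(m-1)h_4^2 \;=\; n^2-\sum_{i=1}^k h_i^2 - 3mh_4(h_1+h_2+h_3).
\end{equation*}
Denote this upper bound by $6U$. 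A valid $\SROS$ exists precisely when the interval $[\max(0,\max_i f_i/2),\ \min(h_2h_3,U)]$ is non-empty.

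Matching the non-emptiness of this interval to the four conditions is the crux. The inequality $U\geq 0$ (needed when all $f_i\leq 0$) is exactly condition (1). For each $j\in[3]$, the identity $6U-3f_j = n^2-\sum h_i^2-3(mh_4+h_j)(n-mh_4-h_j)$ shows that $6U\geq 3f_j$ is exactly condition (2) for that $j$, which handles every regime in which some $f_j>0$. On the lower end, $f_1/2\leq h_2h_3$ rearranges directly into condition (3); $f_2/2\leq h_2h_3$, after dividing by $h_2$, rearranges into condition (4); and $f_3/2\leq h_2h_3$ reduces to $h_2+mh_4\geq h_1$, which follows from condition (4) together with $h_2\geq h_3$. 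For sufficiency I would take $t:=\max(0,\max_i f_i/2)$ and invoke \Cref{lemma: similar only needs distinct triples} together with \Cref{cor: ROS iff SROS} to produce the $\ROS$; for necessity, \Cref{cor: ROS iff SROS} turns the assumed $\ROS$ into an $\SROS$ whose entry-wise non-negativity yields each of the four inequalities directly.

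The main obstacle is the algebraic bookkeeping that collapses the seven raw inequalities on $t$ to exactly the four stated conditions without spurious extras. Verifying the identity $6U-3f_j = n^2-\sum h_i^2-3(mh_4+h_j)(n-mh_4-h_j)$ is the most delicate piece, as it is what identifies condition (2) as the binding $X'(4,4,4)\geq 0$ constraint in the regime $f_j>0$. The assumed ordering $h_1\geq h_2\geq h_3$ plays a dual role: it pins down $h_2h_3$ as the tightest of the three $X'(i,j,4)$ bounds, and it is precisely what allows condition (4) alone to absorb $f_3/2\leq h_2h_3$, keeping the condition list minimal.
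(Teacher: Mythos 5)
Your proposal is correct and takes essentially the same route as the paper: reduce to an $\SROS(h_1h_2h_3h_4^m,\{h_4^m\})$ via \Cref{cor: ROS iff SROS} and \Cref{lemma: similar only needs distinct triples}, so that $X(1,2,3)$ is the single free parameter and the four conditions are exactly the non-emptiness of its feasible interval; your key identity $6U-3f_j=n^2-\sum_{i}h_i^2-3(mh_4+h_j)(n-mh_4-h_j)$ checks out. The only cosmetic differences are that the paper derives necessity by specialising \Cref{condition: sufiĉa?} rather than reading the inequalities off the $\SROS$ directly, and it sets $X(1,2,3)=\min\{h_2h_3,U\}$ (the upper endpoint of your interval) rather than the lower one.
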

\begin{proof}
    The four conditions are shown to be necessary using \Cref{condition: sufiĉa?}. For each condition respectively, set $C=D=\emptyset$ and use
    \begin{enumerate}[label=\textup{(\arabic*)}]
        \item $A = \{1,2,3\}$, $B = [k]\setminus A$,
        \item $A = \{1,2,3\}\setminus\{j\}$, $B = [k]\setminus A$,
        \item $A = \{1\}$, $B = \{2,3\}$,
        \item $A = \{2\}$, $B = \{1\}$.
    \end{enumerate}

    Suppose that the conditions are met. We construct an $\SROS(h_1h_2h_3h_4^m,\{h_4^m\})$, $X$, as follows:

    Let $X(1,2,3) = \min\{h_2h_3,\frac{1}{6}[n^2 - \sum_{i=1}^k h_i^2 - 3mh_4(h_1+h_2+h_3)]\}$. Then by \Cref{lemma: similar only needs distinct triples}, the remaining values of $X$ are determined also. Now, we must check that all values are non-negative.

    $X(1,2,3)\geq 0$ from (1).
    Observe that for distinct $i,j\in [3]$, $X'(i,j,4) = \frac{1}{m}(h_ih_j - X(1,2,3))$. Since $h_2h_3\leq h_ih_j$ for all distinct $i,j\in[3]$, we know that $X'(i,j,4)\geq 0$. Similarly, $X'(4,4,4) = \frac{1}{m(m-1)(m-2)}[n^2 - \sum_{i=1}^k h_i^2 - 3mh_4(h_1+h_2+h_3) - 6X(1,2,3)]$ and so $X'(4,4,4)\geq 0$.

    The final value to check is $X'(i,4,4)$ for $i\in[3]$. 
    If $X(1,2,3) = h_2h_3$, then $X'(i,4,4) = \frac{1}{m(m-1)}[mh_ih_4 - \sum_{j\in[3]}h_ih_j + h_i^2 +2h_2h_3]$. For $i=2$, $mh_4 - h_1 + h_3 \geq 0$ from (4), and so $X'(2,4,4)\geq 0$. For $i=3$, $mh_4 - h_1 + h_2 \geq mh_4 - h_1 + h_3 \geq 0$. Thus, $X'(3,4,4)\geq 0$. When $i=1$, $mh_1h_4 - h_1h_2 - h_1h_3 + 2h_2h_3\geq 0$ from (3).

    Instead consider the case where $X(1,2,3) = \frac{1}{6}[n^2 - \sum_{i=1}^k h_i^2 - 3mh_4(h_1+h_2+h_3)]$. With some rearrangement, it can be seen that $X'(i,4,4) = \frac{1}{3m(m-1)}[n^2 - \sum_{i=1}^k h_i^2 - 3(mh_4+h_i)(h_1+h_2+h_3-h_i)]\geq 0$ using (2).

    Therefore, for either case of $X(1,2,3)$, all values are non-negative, and $X$ is a rational outline square that respects the partition.
\end{proof}

The third family of partitions requires a more complex construction, as we use an outline square that is similar with respect to two different sizes of subsquare. Thus, we cannot utilise \Cref{lemma: similar only needs distinct triples}.

\begin{lemma}
\label{lemma: h_1h_2^ah_3^b inequalities}
    For $a,b\geq 3$, an $\SROS(h_1 h_2^a h_3^b, \{h_2^a,h_3^b\})$ exists if and only if there exists rational values $X'(2,2,3)$ and $X'(2,3,3)$ satisfying the following inequalities:
    \begin{enumerate}[label=\textup{(\arabic*)}]
        \item $X'(2,2,3),X'(2,3,3)\geq 0$,
        \item $abh_2h_3\geq ab(a-1)X'(2,2,3) + ab(b-1)X'(2,3,3)\geq \max\{abh_2h_3-ah_1h_2,abh_2h_3-bh_1h_3\}$,
        \item $2ab(a-1)X'(2,2,3) + ab(b-1)X'(2,3,3)\leq a(a-1)h_2^2 - ah_1h_2 + abh_2h_3$, and
        \item $ab(a-1)X'(2,2,3) + 2ab(b-1)X'(2,3,3)\leq b(b-1)h_3^2 - bh_1h_3 + abh_2h_3$.
    \end{enumerate}
\end{lemma}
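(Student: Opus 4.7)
The plan is to use the two-sided similarity to reduce the $\SROS$ to a small linear system, then match each of the four conditions to a non-negativity constraint on one of the representative values.

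First I would identify the representative entries. Let $M_2$ and $M_3$ denote the index sets of the $a$ copies of $h_2$ and the $b$ copies of $h_3$ respectively. Because $X$ is similar with respect to both $h_2^a$ and $h_3^b$ and respects the partition, every triple $(i,j,\ell)$ either equals $0$ (when it has the form $(i,i,j)$ with $j\neq i$), equals $p_i^2$ (when $i=j=\ell$), or, on distinct triples, is determined by the block-memberships of the three indices. This leaves exactly seven free values $X'(1,2,2)$, $X'(1,2,3)$, $X'(1,3,3)$, $X'(2,2,2)$, $X'(2,2,3)$, $X'(2,3,3)$, $X'(3,3,3)$.

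Next I would impose the row-sum condition $\sum_{\ell\in[k]} X(i,j,\ell)=p_ip_j$ for each pair of block-memberships of $(i,j)$. The $(1,1)$ case is automatic; counting how often each representative entry appears in each of the other five cases yields
\begin{align*}
h_1h_2 &= (a-1)X'(1,2,2) + b\,X'(1,2,3),\\
h_1h_3 &= a\,X'(1,2,3) + (b-1)X'(1,3,3),\\
h_2^2 &= X'(1,2,2) + (a-2)X'(2,2,2) + b\,X'(2,2,3),\\
h_2h_3 &= X'(1,2,3) + (a-1)X'(2,2,3) + (b-1)X'(2,3,3),\\
h_3^2 &= X'(1,3,3) + a\,X'(2,3,3) + (b-2)X'(3,3,3).
\end{align*}
Since $a,b\geq 3$, all coefficients $a-2, a-1, b-2, b-1$ are positive, so I would solve for $X'(1,2,3), X'(1,2,2), X'(1,3,3), X'(2,2,2), X'(3,3,3)$ in terms of the free parameters $X'(2,2,3)$ and $X'(2,3,3)$: use the fourth equation for $X'(1,2,3)$, substitute back into the first two for $X'(1,2,2)$ and $X'(1,3,3)$, and then use the third and fifth equations for $X'(2,2,2)$ and $X'(3,3,3)$.

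The final step is to translate non-negativity of each derived value into a constraint on the two free parameters. Setting $Y = ab(a-1)X'(2,2,3) + ab(b-1)X'(2,3,3)$, the constraint $X'(1,2,3)\geq 0$ gives $Y\leq abh_2h_3$, while $X'(1,2,2)\geq 0$ and $X'(1,3,3)\geq 0$ give $Y\geq abh_2h_3 - ah_1h_2$ and $Y\geq abh_2h_3 - bh_1h_3$; together these are precisely condition (2). After clearing denominators, $X'(2,2,2)\geq 0$ becomes condition (3) and $X'(3,3,3)\geq 0$ becomes condition (4). Along with condition (1) on the free parameters, the four listed conditions are therefore equivalent to non-negativity of all seven representative values, which in turn is equivalent to the existence of the $\SROS$. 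The main obstacle is purely bookkeeping: correctly tallying how often each representative entry appears in each row-sum (for instance, $a-2$ occurrences of $X'(2,2,2)$ versus $b$ occurrences of $X'(2,2,3)$ when both $i,j\in M_2$) and verifying that after clearing denominators the five non-negativity inequalities condense into exactly the four listed conditions.
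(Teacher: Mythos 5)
Your proposal is correct and follows essentially the same route as the paper: identify the seven representative values of the doubly-similar outline square, use the row-sum conditions for the five cell types to express $X'(1,2,3)$, $X'(1,2,2)$, $X'(1,3,3)$, $X'(2,2,2)$, $X'(3,3,3)$ in terms of the two free parameters, and read off conditions (1)--(4) as the non-negativity constraints. Your tallies of the coefficients and the resulting cleared-denominator identities match the paper's exactly.
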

\begin{proof}
    Rearrange the partition to $(h_1h_2h_3h_2^{a-1}h_3^{b-1})$, and let $M_2 = \{2\}\cup (3+[a-1])$ and $M_3 = \{3\}\cup (a+2+[b-1])$. Suppose that $X$ is an $\SROS(h_1h_2h_3 h_2^{a-1} h_3^{b-1}, \{h_2^a,h_3^b\})$. Then $X$ has only seven variables to determine the entire outline square: $X'(1,2,3)$, $X'(1,2,2)$, $X'(1,3,3)$, $X'(2,2,2)$, $X'(3,3,3)$, $X'(2,2,3)$ and $X'(2,3,3)$. We find each of these in terms of the last two.
    To satisfy \Cref{eq: symmetric outline square cond} with $i=2$ and $j=3$,
    \begin{align*}
        h_2h_3 &= \sum_{\ell\in[a+b+1]}X(2,3,\ell)\\
        &= X(1,2,3) + \sum_{\ell\in M_2}X(2,\ell,3) + \sum_{\ell\in M_3}X(2,3,\ell)\\
        &= X'(1,2,3) + (a-1)X'(2,2,3) + (b-1)X'(2,3,3)
    \end{align*}
    Since $X'(1,2,3)\geq 0$, it follows that $h_2h_3\geq (a-1)X'(2,2,3) + (b-1)X'(2,3,3)$, which gives one side of (2).

    The remaining four variables of $X'$ can be written in terms of $X'(2,2,3)$ and $X'(2,3,3)$ by considering \Cref{eq: symmetric outline square cond} with each of the $(i,j)$ pairs $(1,2)$, $(1,3)$, $(2,2)$ and $(3,3)$. This results in the following
    \begin{itemize}
        \item $abX'(1,2,3) = abh_2h_3 - ab(a-1)X'(2,2,3) - ab(b-1)X'(2,3,3)$,
        \item $a(a-1)X'(1,2,2) = ah_1h_2 - abh_2h_3 + ab(a-1)X'(2,2,3) + ab(b-1)X'(2,3,3)$,
        \item $b(b-1)X'(1,3,3) = bh_1h_3 - abh_2h_3 + ab(a-1)X'(2,2,3) + ab(b-1)X'(2,3,3)$,
        \item $a(a-1)(a-2)X'(2,2,2) = a(a-1)h_2^2 - ah_1h_2 + abh_2h_3 - 2ab(a-1)X'(2,2,3) - ab(b-1)X'(2,3,3)$, and
        \item $b(b-1)(b-2)X'(3,3,3) = b(b-1)h_3^2 - bh_1h_3 + abh_2h_3 - ab(a-1)X'(2,2,3) - 2ab(b-1)X'(2,3,3)$,
    \end{itemize}
    and since each $X'$ variable must be non-negative, the inequalities (1)-(4) are obtained.
\end{proof}

\begin{lemma}
\label{lemma: h_1h_2^ah_3^b inequality system solution}
    Given the system of inequalities:
    \begin{enumerate}[label=\textup{(\arabic*)}]
        \item $2x+y\leq \alpha$
        \item $x+2y\leq \beta$
        \item $x+y\geq \gamma$
        \item $x+y\leq \delta$
    \end{enumerate}
    There is a non-negative solution to this system if and only if $\alpha,\beta,\delta\geq 0$, $\gamma\leq \alpha,\beta,\delta$ and $\alpha+\beta\geq 3\gamma$. Further, if the variables $\alpha$, $\beta$ and $\gamma$ are rational, then the solution is also rational.
\end{lemma}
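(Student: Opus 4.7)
The plan is to treat necessity and sufficiency separately, with sufficiency resting on a one-dimensional reduction: fix the sum $x+y$ at its smallest admissible value and then slide $x$ through a resulting interval.

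\emph{Necessity} is a matter of assembling the inequalities. From $x,y\geq 0$ the bounds $\alpha,\beta,\delta\geq 0$ are immediate. Chaining (3) with $x+y\leq 2x+y\leq\alpha$ (which is legitimate because $x\geq 0$) gives $\gamma\leq\alpha$; the estimates $\gamma\leq\beta$ and $\gamma\leq\delta$ follow analogously from (2) and (4). Adding (1) and (2) yields $3(x+y)\leq\alpha+\beta$, which combined with (3) gives $\alpha+\beta\geq 3\gamma$.

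\emph{Sufficiency and rationality.} Assume the listed inequalities hold. Set $s=\max(\gamma,0)$. If $\gamma\leq 0$, then $(x,y)=(0,0)$ trivially satisfies all four inequalities, using $\alpha,\beta,\delta\geq 0$. If $\gamma>0$, take $x+y=\gamma$; this satisfies (3) and (4) automatically (the latter by $\gamma\leq\delta$), and reduces (1), (2), $x\geq 0$, $y\geq 0$ to
\[
\max(0,\,2\gamma-\beta)\;\leq\; x\;\leq\; \min(\gamma,\,\alpha-\gamma).
\]
The four endpoint comparisons needed for non-emptiness of this interval are exactly $\gamma>0$, $\gamma\leq\alpha$, $\gamma\leq\beta$, and $3\gamma\leq\alpha+\beta$, all of which hold by hypothesis. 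Any $x$ in this interval, paired with $y=\gamma-x$, is a valid non-negative solution. When $\alpha,\beta,\gamma\in\mathbb{Q}$ both endpoints are rational, so taking $x$ to be either endpoint produces a rational $(x,y)$; no rationality assumption on $\delta$ is needed, since $\delta$ enters only through $\gamma\leq\delta$ and not through the choice of $x$.

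\emph{Main obstacle.} There is no serious obstacle; the only subtle point is remembering to split on the sign of $\gamma$ so that the reduction $x+y=\max(\gamma,0)$ is compatible with (3). After that split, every verification is a direct piece of linear arithmetic.
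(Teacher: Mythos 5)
Your proof is correct and takes essentially the same approach as the paper: both arguments fix $x+y=\gamma$ (after disposing of the case $\gamma\leq 0$ with $(0,0)$) and then locate a feasible $x$ on that line, your interval $[\max(0,2\gamma-\beta),\min(\gamma,\alpha-\gamma)]$ simply packaging the paper's explicit two-case choice ($x=0$ when $\beta\geq 2\gamma$, else $x=2\gamma-\beta$) as a non-emptiness check. The only cosmetic blemish is the unused definition $s=\max(\gamma,0)$.
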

\begin{proof}
    We begin by assuming that there is a solution to the system with $x,y\geq 0$. Then it is clear from $(1)$, $(2)$ and $(4)$ that $\alpha,\beta,\delta\geq 0$, and from $(3)$ and $(4)$ that $\gamma\leq \delta$. From $(1)$ and $(3)$, we know that $\gamma\leq x+y\leq 2x+y\leq \alpha$, and similarly we have that $\gamma\leq \beta$ from $(2)$ and $(3)$. Finally, combining $(1)$ and $(2)$, it is seen that $3\gamma\leq \alpha+\beta$.

    We now suppose that the conditions are met. Firstly, if $\gamma\leq 0$, then taking $x=y=0$ is a solution to the system. Thus, we assume that $\gamma>0$ from here.
    
    We break the solution into two cases.

    If $\beta\geq 2\gamma$, take $x=0$ and $y=\gamma$. Then it is obvious that $(3)$ and $(4)$ are satisfied. For $(1)$, $2x+y = \gamma\leq \alpha$, and for $(2)$, $x+2y = 2\gamma \leq 2\alpha\leq \beta$.

    Otherwise, take $x=2\gamma - \beta$ and $y=\beta - \gamma$. Then $x,y\geq 0$. Since $\gamma\leq \delta$, $(3)$ and $(4)$ are satisfied. $x+2y = \beta$ and $2x+y = 3\gamma-\beta$, so $(1)$ and $(2)$ are also satisfied.

    Therefore, in either case, there is a non-negative solution to the system of inequalities.

    It is clear that if $\alpha$, $\beta$ and $\gamma$ are rational, then the solution in each case is also rational.
\end{proof}

\begin{theorem}
    For $a,b\geq 3$, an $\ROS(h_1 h_2^a h_3^b)$ exists if and only if the following are satisfied:
    \begin{enumerate}[label=\textup{(\arabic*)}]
        \item $(a-1)h_2 + bh_3\geq h_1$,
        \item $(b-1)h_3 + ah_2\geq h_1$,
        \item $a(a-1)h_2^2\geq ah_1h_2 - bh_1h_3$,
        \item $b(b-1)h_3^2\geq bh_1h_3 - ah_1h_2$,
        \item $a(a-1)h_2^2 + b(b-1)h_3^2 \geq abh_2h_3 - 2ah_1h_2 + bh_1h_3$, and
        \item $a(a-1)h_2^2 + b(b-1)h_3^2 \geq abh_2h_3 - 2bh_1h_3 + ah_1h_2$.
    \end{enumerate}
\end{theorem}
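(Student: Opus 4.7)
The plan is to combine the three tools set up in this section: Corollary \ref{cor: ROS iff SROS} reduces $\ROS(h_1 h_2^a h_3^b)$ to $\SROS(h_1 h_2^a h_3^b, \{h_2^a, h_3^b\})$, Lemma \ref{lemma: h_1h_2^ah_3^b inequalities} reduces that further to the existence of a non-negative rational pair $(X'(2,2,3), X'(2,3,3))$ satisfying four inequalities, and Lemma \ref{lemma: h_1h_2^ah_3^b inequality system solution} characterises when such a system has a non-negative solution. So the task boils down to translating the inequalities of Lemma \ref{lemma: h_1h_2^ah_3^b inequalities} into the $2x+y\leq\alpha$, $x+2y\leq\beta$, $\gamma\leq x+y\leq\delta$ form of Lemma \ref{lemma: h_1h_2^ah_3^b inequality system solution}, and then matching the resulting feasibility conditions to the six stated inequalities.

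Concretely, I would substitute $x = ab(a-1)X'(2,2,3)$ and $y = ab(b-1)X'(2,3,3)$. Then the inequalities of Lemma \ref{lemma: h_1h_2^ah_3^b inequalities} become
\begin{align*}
2x + y &\leq a(a-1)h_2^2 - ah_1h_2 + abh_2h_3 =: \alpha,\\
x + 2y &\leq b(b-1)h_3^2 - bh_1h_3 + abh_2h_3 =: \beta,\\
x + y &\leq abh_2h_3 =: \delta,\\
x + y &\geq \max\{abh_2h_3 - ah_1h_2,\, abh_2h_3 - bh_1h_3\} =: \gamma.
\end{align*}
Since $\gamma$ is a maximum of two quantities $\gamma_1$ and $\gamma_2$, applying Lemma \ref{lemma: h_1h_2^ah_3^b inequality system solution} is equivalent to requiring $\alpha, \beta, \delta \geq 0$ together with $\gamma_i \leq \alpha, \beta, \delta$ and $\alpha + \beta \geq 3\gamma_i$ for $i = 1, 2$. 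The conditions on $\delta$ are automatic (since $h_1, h_2, h_3 \geq 0$), as are $\gamma_1 \leq \alpha$ and $\gamma_2 \leq \beta$, which leaves exactly six non-trivial conditions.

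I would then check, one at a time, that these six conditions simplify to the six stated inequalities: $\alpha \geq 0$ and $\beta \geq 0$ (after dividing by $ah_2$ and $bh_3$ respectively) give \textup{(1)} and \textup{(2)}; $\gamma_2 \leq \alpha$ and $\gamma_1 \leq \beta$ give \textup{(3)} and \textup{(4)}; and $\alpha + \beta \geq 3\gamma_1$, $\alpha + \beta \geq 3\gamma_2$ give \textup{(5)} and \textup{(6)}. Finally, Lemma \ref{lemma: h_1h_2^ah_3^b inequality system solution} further guarantees that when these conditions hold, there is a \emph{rational} solution, so the inverse substitution yields non-negative rational values for $X'(2,2,3)$ and $X'(2,3,3)$, as required for a rational outline square.

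The main obstacle is bookkeeping rather than any genuine difficulty: one must verify the two directions (necessity of each of the six inequalities, and sufficiency via producing $X'(2,2,3), X'(2,3,3) \geq 0$) and keep track of which $\gamma_i$ corresponds to which stated condition. A small care point is that the inequality $\gamma \leq \delta$ is automatic and so does not contribute an extra condition, and similarly $\gamma_1 \leq \alpha$ and $\gamma_2 \leq \beta$ hold trivially because they amount to $a(a-1)h_2^2 \geq 0$ and $b(b-1)h_3^2 \geq 0$; this must be checked to make sure that the list of six conditions really is complete.
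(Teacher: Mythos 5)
Your proposal is correct and follows essentially the same route as the paper: reduce via Corollary~\ref{cor: ROS iff SROS} and Lemma~\ref{lemma: h_1h_2^ah_3^b inequalities} to the linear system, substitute $x=ab(a-1)X'(2,2,3)$ and $y=ab(b-1)X'(2,3,3)$, apply Lemma~\ref{lemma: h_1h_2^ah_3^b inequality system solution}, and discard the trivially satisfied conditions. Your identification of which feasibility conditions collapse to which of the six stated inequalities matches the paper's case analysis exactly.
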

\begin{proof}
    By \Cref{cor: ROS iff SROS} and \Cref{lemma: h_1h_2^ah_3^b inequalities}, we need only show that there are values for $X'(2,2,3)$ and $X'(2,3,3)$ satisfying the inequalities given in \Cref{lemma: h_1h_2^ah_3^b inequalities}. Observe that these inequalities are in the form given in \Cref{lemma: h_1h_2^ah_3^b inequality system solution} with $x=ab(a-1)X'(2,2,3)$ and $y=ab(b-1)X'(2,3,3)$, and so we must only satisfy the conditions given there.

    Thus, we must show that:
    \begin{enumerate}[label=\textup{\{\arabic*\}}]
        \item $a(a-1)h_2^2 - ah_1h_2 + abh_2h_3\geq 0$,
        \item $b(b-1)h_3^2 - bh_1h_3 + abh_2h_3\geq 0$,
        \item $abh_2h_3\geq 0$,
        \item $\max\{abh_2h_3-ah_1h_2,abh_2h_3-bh_1h_3\} \leq a(a-1)h_2^2 - ah_1h_2 + abh_2h_3$,
        \item $\max\{abh_2h_3-ah_1h_2,abh_2h_3-bh_1h_3\} \leq b(b-1)h_3^2 - bh_1h_3 + abh_2h_3$,
        \item $\max\{abh_2h_3-ah_1h_2,abh_2h_3-bh_1h_3\} \leq abh_2h_3$, and
        \item $a(a-1)h_2^2 + b(b-1)h_3^2 + 2abh_2h_3 - ah_1h_2 - bh_1h_3\geq 3\max\{abh_2h_3-ah_1h_2,abh_2h_3-bh_1h_3\}$.
    \end{enumerate}

    Conditions $\{1\}$ and $\{2\}$ are equivalent to $(1)$ and $(2)$, and $\{3\}$ and $\{6\}$ are clearly always satisfied. By splitting the cases, $\{7\}$ is equivalent to $(5)$ and $(6)$. Similarly, $\{4\}$ is equivalent to $(3)$ and $a(a-1)h_2^2\geq 0$, which is always true, and $\{5\}$ is equivalent to $(4)$ and $b(b-1)h_3^2\geq 0$.

    Therefore, the conditions $\{1\}$-$\{7\}$ are equivalent to $(1)$-$(6)$, and so an $\ROS(h_1 h_2^a h_3^b)$ exists if and only if $(1)$-$(6)$ are satisfied.
\end{proof}

It can be observed that the conditions $(1)$-$(6)$ are obtained from \Cref{condition: sufiĉa?}, and so the conditions of that result are sufficient to show the existence of an $\ROS(h_1 h_2^a h_3^b)$.

\section{Constructing realizations from rational outline squares}

    Although \Cref{thm: h_1h_2h_3^m rational} solves the existence of a symmetric rational outline square for partitions of the form $(h_1h_2h_3^m)$, it does not solve the existence of an $\LS(h_1h_2h_3^m)$. In this section, we construct an outline square for the realization from the rational outline square.

We construct a realization using the following lemma from \cite{kemp2024latin}, where $\{x\} = x - \lfloor x\rfloor$ denotes the fractional part of a real number $x$.

\begin{lemma}[\cite{kemp2024latin}]
\label{lemma: rational outline to full}
    Let $O$ be an $\ROS(h_1\dots h_k)$. If there exists a $k\times k$ array $B$ of multisets, where $y(i,j)$ denotes the number of entries in cell $(i,j)$ of $B$, such that
    \begin{itemize}
        \item $y(i,j) = \sum_{\ell\in[k]} \{O(i,j,\ell)\}$,
        \item the number of copies of symbol $j$ in row $i$ of $B$ is $y(i,j)$,
        \item and the number of copies of symbol $j$ in column $i$ of $B$ is $y(i,j)$,
    \end{itemize}
    then there exists an outline square $L$ which respects $(h_1\dots h_k)$.
\end{lemma}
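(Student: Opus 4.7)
The plan is to round each rational value $O(i,j,\ell)$ down to $\lfloor O(i,j,\ell)\rfloor$ and use $B$ to supply the missing ``fractional'' contributions. Concretely, I would define $L$ by letting cell $(i,j)$ contain $\lfloor O(i,j,\ell)\rfloor$ copies of each symbol $\ell\in[k]$, together with the multiset $B(i,j)$. The three outline-square conditions then reduce to direct arithmetic, with symmetry of $O$ doing the essential work.

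First, for the cell-size condition, I would use $\sum_\ell O(i,j,\ell)=h_ih_j$ and split each term into its floor and fractional parts: the floors contribute $\sum_\ell\lfloor O(i,j,\ell)\rfloor$ and the fractional parts sum to $y(i,j)=|B(i,j)|$, so $|L(i,j)|=h_ih_j$ as required. For the row count of symbol $\ell$ in row $i$, the floors contribute $\sum_j\lfloor O(i,j,\ell)\rfloor$ and $B$ contributes $y(i,\ell)$ by hypothesis. The key rewrite is
$$y(i,\ell)=\sum_m\{O(i,\ell,m)\}=\sum_m\{O(i,m,\ell)\}=\sum_j\{O(i,j,\ell)\},$$
using symmetry of $O$; combining this with the floor contribution gives $\sum_j O(i,j,\ell)=h_ih_\ell$, exactly what is required. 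The column count is identical after exchanging the roles of rows and columns, using the corresponding column hypothesis on $B$.

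Finally, I would verify that $L$ respects the partition. Because $O$ respects $(h_1\ldots h_k)$, the values $O(i,i,\ell)$ are either $h_i^2$ (when $\ell=i$) or $0$, and in both cases integer. Thus $y(i,i)=\sum_\ell\{O(i,i,\ell)\}=0$, so $B(i,i)$ is empty, and cell $(i,i)$ of $L$ consists solely of $\lfloor O(i,i,i)\rfloor=h_i^2$ copies of symbol $i$. Hence the diagonal blocks of $L$ are the required subsquares.

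There is no serious obstacle here; the only conceptual point I would isolate at the start of the proof is that the row/column balance on $B$ is stated via $y(i,\ell)$, which is a sum of fractional parts over the third argument of $O$, whereas balance in $L$ directly demands a sum over the second argument. These two sums coincide precisely because $O$ is symmetric, so this hypothesis is genuinely used and not cosmetic.
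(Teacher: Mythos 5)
Your proof is correct and matches the intended construction: the paper itself does not reprove this cited lemma, but its later remark about bringing ``the floored rational outline square $O$ to an integer solution'' confirms that the floor-plus-$B$ decomposition, with symmetry of $O$ converting the third-argument fractional sums $y(i,\ell)$ into the second-argument row/column sums, is exactly the argument used in \cite{kemp2024latin}.
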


We introduce some notation to better define the array $B$ that is needed for this construction method.

\begin{definition}
    A \emph{frequency array} $F$ of order $k$ is a $k\times k$ array, where each cell contains a single non-negative integer.
\end{definition}

\begin{definition}
    Let $O$ be a $k\times k$ array of multisets with elements from $[k]$. $O(i,j)$ denotes the multiset of symbols in cell $(i,j)$, $O_{\ell}^i$ and $^jO_\ell$ denote the number of copies of symbol $\ell$ in row $i$ and column $j$ respectively.
    Then $O$ is an \emph{outline array} corresponding to a frequency array $F$ of order $k$, if for all $i,j,\ell\in[k]$
    \begin{itemize}
        \item $|O(i,j)| = F(i,j)$,
        \item $O_\ell^i = F(i,\ell)$, and
        \item $^jO_\ell = F(\ell,j)$.
    \end{itemize}
\end{definition}

It is clear that the array $B$ in \Cref{lemma: rational outline to full} is an outline array for the frequency array $F$ where $F(i,j) = \sum_{\ell\in[k]} \{O(i,j,
\ell)\} = y(i,j) = y(j,i)$. Since the outline array $B$ is part of the final outline square $L$, $B$ can be considered a partial outline square. In general, any outline array is a partial outline square, and the conditions on outline arrays mean that they can be combined to form larger outline arrays. We now provide some results in that direction.

\subsection{Frequency arrays}

\begin{lemma}
\label{lemma: sum freq arrays}
    If $O_1$ and $O_2$ are outline arrays corresponding to the frequency arrays $F_1$ and $F_2$ respectively, then there exists an outline array $O^*$ corresponding to the frequency array $F^*$ where $F^*(i,j) = F_1(i,j) + F_2(i,j)$.
\end{lemma}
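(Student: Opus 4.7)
The natural construction is the cellwise multiset union: define $O^*$ by setting $O^*(i,j) = O_1(i,j) \cup O_2(i,j)$ for every $i,j\in[k]$, where $\cup$ denotes multiset union (so multiplicities add). All of the verification is then just a matter of writing the three defining properties of an outline array for $O^*$ and splitting each sum into its $O_1$ and $O_2$ contributions.

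Concretely, the plan is to check the three bullet points of the definition in turn. For the size condition, $|O^*(i,j)| = |O_1(i,j)| + |O_2(i,j)| = F_1(i,j) + F_2(i,j) = F^*(i,j)$ directly from the definition of multiset union and the fact that $O_1,O_2$ are outline arrays for $F_1,F_2$. For the row-frequency condition, the number of copies of $\ell$ in row $i$ of $O^*$ equals $(O_1)_\ell^i + (O_2)_\ell^i = F_1(i,\ell)+F_2(i,\ell) = F^*(i,\ell)$. The column condition follows by the same argument with rows replaced by columns.

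There is no real obstacle here: the additivity of multiset multiplicities under union makes each of the three defining conditions of an outline array additive in the underlying frequency arrays, so the claim essentially reduces to adding the identities that hold for $O_1$ and $O_2$ termwise. The only thing worth noting in the write-up is to make the notational distinction between the symbol-count operators $(\cdot)_\ell^i$ and ${}^j(\cdot)_\ell$ for $O_1$, $O_2$, and $O^*$ explicit, so that the one-line computations above are unambiguous.
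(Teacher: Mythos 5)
Your proposal is correct and is essentially identical to the paper's own proof: both take $O^*$ to be the cell-wise multiset union of $O_1$ and $O_2$ and verify the three defining conditions by additivity of multiset multiplicities. Nothing further is needed.
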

\begin{proof}
    Take $O^*$ to be the cell-wise union of $O_1$ and $O_2$. So $O^*(i,j) = O_1(i,j)\cup O_2(i,j)$ for all $i,j\in[k]$. Then $|O^*(i,j)| = |O_1(i,j)| + |O_2(i,j)| = F_1(i,j) + F_2(i,j) = F^*(i,j)$. Similarly, $(O^*)_\ell^i = F^*(i,\ell)$ and $^j(O^*)_\ell = F^*(\ell,j)$.
\end{proof}

Observe that a realization $\LS(h_1\dots h_k)$ is the sum of two outline arrays corresponding to the frequency arrays $F_1$ and $F_2$, where $F_1(i,j) = h_ih_j$ for all $i\neq j$, $F_1(i,i) = 0$, $F_2(i,j) = 0$ when $i\neq j$ and $F_2(i,i) = h_i^2$.

\begin{lemma}
\label{lemma: summing rows/columns of outline array}
    If an outline array $O$ exists for an order $k$ frequency array $F$, then for any partition $S_1,S_2,\dots,S_{k'}$ of $[k]$, an outline array $O^*$ exists for the order $k'$ array $F^*$, where for all $i,j\in[k']$ $$F^*(i,j) = \sum_{x\in S_i}\sum_{y\in S_j}F(x,y).$$
\end{lemma}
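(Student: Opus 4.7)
The plan is to build $O^*$ by amalgamating the rows, columns, and symbols of $O$ simultaneously according to the partition $S_1,\dots,S_{k'}$. Define a relabelling map $\sigma\colon[k]\to[k']$ by $\sigma(\ell)=m$ whenever $\ell\in S_m$. For each $(i,j)\in[k']^2$, take $O^*(i,j)$ to be the multiset union, over $x\in S_i$ and $y\in S_j$, of the multiset obtained from $O(x,y)$ by replacing every symbol $\ell$ with $\sigma(\ell)$. Intuitively, the block $S_i\times S_j$ of cells of $O$ is pooled into a single cell of $O^*$, and each occurrence of a symbol from $S_m$ is recorded simply as $m$.

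It then remains to verify the three conditions defining an outline array for $O^*$ with frequency array $F^*$. The cell-size condition is immediate:
$$|O^*(i,j)| \;=\; \sum_{x\in S_i}\sum_{y\in S_j}|O(x,y)| \;=\; \sum_{x\in S_i}\sum_{y\in S_j}F(x,y) \;=\; F^*(i,j).$$
For the row condition, the number of copies of the symbol $m\in[k']$ appearing in row $i$ of $O^*$ is, after interchanging the order of summation, $\sum_{x\in S_i}\sum_{\ell\in S_m}O_\ell^x$; applying $O_\ell^x=F(x,\ell)$ collapses this to $\sum_{x\in S_i}\sum_{\ell\in S_m}F(x,\ell)=F^*(i,m)$. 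The column condition follows from the symmetric computation using ${}^yO_\ell=F(\ell,y)$.

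I expect no real obstacle; the argument is a double-sum bookkeeping check very much in the spirit of \Cref{lemma: sum freq arrays}. The one thing to keep straight is that the symbol alphabet is coarsened from $[k]$ to $[k']$ via $\sigma$ at the same time as the row and column indices are coarsened, so that the inner sum over $\ell\in S_m$ in the block $S_i\times[k]$ of $O$ converts correctly to a single count of the symbol $m$ in row $i$ of $O^*$, with the partition property $\bigsqcup_{j\in[k']} S_j=[k]$ ensuring that no cells of $O$ are counted more than once.
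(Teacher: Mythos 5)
Your proposal is correct and matches the paper's own proof essentially verbatim: both amalgamate rows, columns, and symbols according to the partition and verify the three outline-array conditions by interchanging the order of the double sums. (If anything, your explicit relabelling map $\sigma$ is slightly cleaner than the paper's phrasing.)
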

\begin{proof}
    The outline array $O^*$ is constructed by amalgamating the rows, columns and symbols of $O$ according to the partition $S$ of $[k]$. For all $i,j\in[k']$, take $$O^*(i,j) = \bigcup_{x\in S_i}\bigcup_{y\in S_j} O(i,j).$$
    Then it is clear that $|O^*(i,j)| = F^*(i,j)$.

    To obtain an array with only $k'$ symbols, for each $i\in[k']$ replace all symbols $x\in S_i$ with the symbol $i$. Thus, for all $i,\ell\in [k']$, ${O^*}_\ell^i = \sum_{r\in S_i}\sum_{x\in S_\ell}O_x^r = F^*(i,\ell)$. Similarly, $^j{O^*}_\ell = F^*(\ell,j)$ for all $j,\ell\in[k']$.
\end{proof}

\subsection{Equitable graph colouring}

Within the construction of the array $B$, we also use some graph colouring methods. Thus, we introduce those notions here. Let $G$ be a graph with vertex set $V$ and edge set $E$, where $E$ may contain repeated edges but no loops.

A partition of $E$ into mutually disjoint sets $C_1,\dots, C_k$ is an \emph{edge-colouring} of $G$ with $k$ colours, where edge $e\in E$ has colour $i$ if $e\in C_i$. For $v\in V$ and $i\in[k]$, let $c_i(v)$ be the number of edges adjacent to $v$ in $C_i$. An edge-colouring of $G$ is \emph{equitable} if for all $v\in V$ and all $i,j\in[k]$ where $i\neq j$,
$$|c_i(v)-c_j(v)|\leq 1.$$

The following theorem was proven by De Werra.

\begin{theorem}[\cite{de1971balanced}]
\label{thm: de werra}
    For each $k\geq 1$, any finite bipartite graph has an equitable edge-colouring with $k$ colours.
\end{theorem}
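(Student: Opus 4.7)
The plan is to fix any initial edge-colouring $C_1,\dots,C_k$ of $G$ (for instance, put all edges in $C_1$) and then repeatedly perform a local ``balancing'' operation on pairs of colour classes until the colouring becomes equitable. To certify termination I would use the potential
$\Phi = \sum_{v\in V}\sum_{\ell=1}^{k} c_\ell(v)^2,$
which is a non-negative integer, and show that each balancing step strictly decreases $\Phi$.

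The balancing operation: suppose the colouring is not equitable, so there exist a vertex $v$ and two colours $i,j$ with $c_i(v)\geq c_j(v)+2$. Let $H$ be the subgraph of $G$ formed by the edges of colours $i$ and $j$ (with their multiplicities if needed). The goal is to recolour the edges of $H$, using only the colours $i$ and $j$, so that at every vertex $u$ the new counts $c'_i(u)$ and $c'_j(u)$ satisfy $|c'_i(u)-c'_j(u)|\leq 1$. To do this, decompose each connected component of $H$ into a minimum number of edge-disjoint trails: a component whose vertices all have even degree gives a single closed Eulerian circuit, while a component with $2m>0$ vertices of odd degree decomposes into $m$ open Eulerian trails whose endpoints are precisely the odd-degree vertices. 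Along each trail, colour the edges alternately $i,j,i,j,\dots$.

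To verify this produces the claimed balance I would check each vertex $u$: at any interior vertex of a trail, consecutive edges along the trail get different colours, so that trail contributes equally to $c'_i(u)$ and $c'_j(u)$; a trail whose endpoint is $u$ contributes an imbalance of exactly $1$ at $u$. In a minimum trail decomposition each vertex is the endpoint of at most one trail, so $|c'_i(u)-c'_j(u)|\leq 1$ everywhere. Bipartiteness of $G$ enters decisively in the even-degree case: a closed Eulerian circuit in a bipartite multigraph has even length (every closed walk in a bipartite graph does), so the alternating colouring around a circuit is self-consistent at the base vertex. Combining this with the identity $a^2+b^2=\frac{1}{2}\bigl((a+b)^2+(a-b)^2\bigr)$, applied with $a=c_i(u)$, $b=c_j(u)$ and $a'=c'_i(u)$, $b'=c'_j(u)$ (note $a+b=a'+b'$), the condition $|a'-b'|\leq 1\leq |a-b|$ at every $u$ and $|a'-b'|\leq 1<2\leq |a-b|$ at $u=v$ yields a strict decrease of $\Phi$.

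The main obstacle is the recolouring step, and in particular the parity argument that requires bipartiteness: in a non-bipartite graph the statement itself is false (e.g.\ $K_3$ with $k=2$) precisely because an odd Eulerian circuit would force the alternating recolouring to conflict at the base vertex. Everything else is bookkeeping: once the balancing move is justified, iterating it strictly decreases the integer potential $\Phi\geq 0$, so the process terminates after finitely many steps at an equitable $k$-edge-colouring.
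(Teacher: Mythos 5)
The paper does not prove this statement at all: it is quoted as a known result of De Werra and used as a black box, so there is no internal proof to compare yours against. Your argument is the classical one (essentially De Werra's): repeatedly rebalance a pair of colour classes $i,j$ at an offending vertex by taking the subgraph $H$ of $i$- and $j$-coloured edges, splitting each component into a minimum number of edge-disjoint trails, alternating the two colours along each trail, and certifying termination with the potential $\Phi=\sum_v\sum_\ell c_\ell(v)^2$. The structure is sound, and you correctly identify where bipartiteness is indispensable (closed Eulerian circuits have even length, so the alternation is consistent at the base vertex; $K_3$ with $k=2$ shows the hypothesis cannot be dropped).

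One step is stated imprecisely and should be repaired, though the repair is a one-line parity remark. You assert ``$|a'-b'|\leq 1\leq |a-b|$ at every $u$'', but at a vertex with $c_i(u)=c_j(u)$ we have $|a-b|=0<1$, so that chain of inequalities is false as written. What you actually need is $(a'-b')^2\leq (a-b)^2$ at every vertex, and this does hold: the degree of $u$ in $H$ is $c_i(u)+c_j(u)$, which has the same parity as $c_i(u)-c_j(u)$. Hence if $|a-b|=0$ then $u$ has even $H$-degree, is not an endpoint of any open trail in the minimum decomposition, and receives perfectly balanced contributions, so $|a'-b'|=0$; while if $|a-b|$ is odd (hence $\geq 1$) then $u$ is the endpoint of exactly one trail and $|a'-b'|=1\leq|a-b|$. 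With that correction the monotonicity of $\Phi$ at every vertex, and its strict decrease at the chosen vertex $v$ where $|a-b|\geq 2$, go through, and the non-negative integer potential forces termination at an equitable colouring. Everything else (the minimum trail decomposition having each vertex as an endpoint of at most one trail, the handling of multiple edges) is standard and correctly invoked.
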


\subsection{Constructing the array $B$}

We return to the construction of the outline array $B$ required for \Cref{lemma: rational outline to full}.

\begin{observation}
\label{lemma: freq array for h_1h_2h_3^m}
Permute the rows and columns of the $\SROS(h_1h_2h_3^m,\{h_3^m\})$ in \Cref{thm: h_1h_2h_3^m rational}, so that it is now an $\SROS(h_3^mh_1h_2,\{h_3^m\})$. Then the frequency array $F$ for the outline array $B$ in \Cref{lemma: rational outline to full} is of order $k$ with
$$F(i,j) = \begin{cases}
    0, & \text{if $i=j$,}\\
    a, & \text{if $i,j\leq m$,}\\
    b, & \text{if $i=m+1$ and $j\leq m$ or $j=m+1$ and $i\leq m$,}\\
    c, & \text{if $i=m+2$ and $j\leq m$ or $j=m+2$ and $i\leq m$,}\\
    d, & \text{otherwise,}
\end{cases}$$
where $$a = (m-2)\{X'(3,3,3)\} + \{X'(1,3,3)\} + \{X'(2,3,3)\},$$ $$b = (m-1)\{X'(1,3,3)\} + \{X'(1,2,3)\},$$ $$c = (m-1)\{X'(2,3,3)\} + \{X'(1,2,3)\}$$ $$\text{and } d = m\{X'(1,2,3)\}.$$
\end{observation}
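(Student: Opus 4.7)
The plan is to verify the claimed formula directly by applying the definition of the frequency array $F$ implicit in \Cref{lemma: rational outline to full}, namely $F(i,j) = \sum_{\ell \in [m+2]} \{X(i,j,\ell)\}$, and then computing this sum by a case analysis on where $i$ and $j$ sit relative to the similar block $M = [m]$ (the indices corresponding to the $m$ copies of $h_3$ after the permutation). Since the permuted partition is $(h_3^m h_1 h_2)$, the possible index types are "inside $M$", "equal to $m+1$" (the $h_1$ class), and "equal to $m+2$" (the $h_2$ class), which together give the five cell types appearing in the statement.

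First I would handle the diagonal: because $X$ respects the partition, $X(i,i,i)$ is $h_i^2$ and $X(i,i,\ell) = 0$ for $\ell \neq i$, all integers, so $F(i,i) = 0$. For the off-diagonal cells, I would use that the only $X'$-parameters which can be nonzero off the diagonal are $X'(3,3,3)$, $X'(1,3,3)$, $X'(2,3,3)$, and $X'(1,2,3)$ (any other combination would force two of the three arguments into the singleton class for $h_1$ or $h_2$, and by the respects-$P$ condition those values vanish). Then for each of the four remaining cell types I would enumerate the values $X(i,j,\ell)$ as $\ell$ ranges over $[m+2]$, invoking the similarity rules (1)--(3) in the definition of $\SROS$ to identify each term, and sum the fractional parts. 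For instance, when $i,j \in M$ are distinct, the $\ell = i,j$ terms vanish, each of the $m-2$ choices $\ell \in M\setminus\{i,j\}$ contributes $\{X'(3,3,3)\}$, $\ell = m+1$ contributes $\{X'(1,3,3)\}$, and $\ell = m+2$ contributes $\{X'(2,3,3)\}$, giving $a$. Analogous enumerations deliver $b$, $c$, $d$.

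The argument contains no real obstacle: it is entirely mechanical. The only care needed is bookkeeping: correctly counting the multiplicity of each $X'$-parameter in each row/column type, and remembering which terms are killed either by the respects-$P$ condition (repeated indices in a singleton class) or by the convention $X(i,i,j) = 0$ for distinct $i,j$. Once the enumerations are carried out the four formulas for $a,b,c,d$ match the observation verbatim.
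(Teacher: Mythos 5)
Your verification is correct and is precisely the mechanical check the paper leaves implicit by stating this as an Observation without proof: the case analysis on the position of $i,j$ relative to $M=[m]$, the vanishing of all repeated-index terms via the respects-$P$ condition, and the multiplicity counts $(m-2)$, $(m-1)$, $(m-1)$, $m$ for the surviving $X'$ parameters all match the stated formulas for $a$, $b$, $c$, $d$. Nothing is missing.
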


The frequency array $F$ is shown in \Cref{fig:abcd array}. Note that $0\leq a,b,c,d\leq m-1$, since $0\leq \{x\}<1$ for any $x\in\mathbb{Q}$.

    \begin{figure}[h]
        \centering
    $$\begin{array}{|cccccc|cc|} \hline
    - & a & a & \dots & a & a & b & c \\
    a & - & a & \dots & a & a & b & c \\
    a & a & - & \dots & a & a & b & c \\
    \vdots & \vdots & \vdots & \ddots & \vdots & \vdots & \vdots & \vdots \\
    a & a & a & \dots & - & a & b & c \\
    a & a & a & \dots & a & - & b & c \\ \hline
    b & b & b & \dots & b & b & - & d \\
    c & c & c & \dots & c & c & d & - \\ \hline
    \end{array}$$
        \caption{}
        \label{fig:abcd array}
    \end{figure}

\begin{lemma}
\label{L fill cond}
    Given positive integers $h_1,h_2,h_3$, the values of $a,b,c,d$ in \Cref{lemma: freq array for h_1h_2h_3^m} satisfy the following conditions:
    \begin{enumerate}[label=\textup{(\arabic*)}]
        \item $d\leq mb$
        \item $d\leq mc$
        \item $b\leq (m-1)a+c$
        \item $c\leq (m-1)a+b$
        \item $2d\geq m(b+c-(m-1)a)$
    \end{enumerate}
\end{lemma}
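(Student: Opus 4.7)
The plan is to prove each of the five inequalities by direct substitution, using the explicit formulas for $a,b,c,d$ from Observation~\ref{lemma: freq array for h_1h_2h_3^m} and the trivial fact that $\{x\}\ge 0$ for every $x\in\mathbb{Q}$. Each inequality, after simplification, will reduce to a statement that some explicit non-negative integer combination of the four fractional parts $\{X'(1,2,3)\}$, $\{X'(1,3,3)\}$, $\{X'(2,3,3)\}$, $\{X'(3,3,3)\}$ is non-negative. Since $m\ge 3$, the coefficients $m(m-1)$ and $(m-1)(m-2)$ that appear are strictly positive.

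Concretely, for (1) and (2) I would compute
\[
mb-d = m(m-1)\{X'(1,3,3)\},\qquad mc-d = m(m-1)\{X'(2,3,3)\},
\]
both of which are obviously non-negative. For (3) and (4) the calculations are symmetric in the roles of $\{X'(1,3,3)\}$ and $\{X'(2,3,3)\}$; expanding yields
\[
(m-1)a+c-b = (m-1)(m-2)\{X'(3,3,3)\} + 2(m-1)\{X'(2,3,3)\},
\]
and analogously for (4) with $\{X'(1,3,3)\}$ in place of $\{X'(2,3,3)\}$. Finally for (5), after cancellation the terms involving $\{X'(1,3,3)\}$ and $\{X'(2,3,3)\}$ drop out and one is left with
\[
2d - m\bigl(b+c-(m-1)a\bigr) = m(m-1)(m-2)\{X'(3,3,3)\},
\]
again non-negative. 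The proof is thus just five short algebraic verifications.

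There is no real obstacle — the only mild nuisance is bookkeeping, since each of $a,b,c$ involves the fractional parts of linear combinations of the variables $X'$ and one must keep track of all six terms when expanding $b+c-(m-1)a$. I would present the argument compactly by stating each of the five identities as a one-line calculation and then concluding the stated inequality from $\{\cdot\}\ge 0$ and $m\ge 3$.
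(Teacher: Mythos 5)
Your proposal is correct and matches the paper's proof, which simply observes that each inequality follows immediately from the formulas for $a,b,c,d$ and the non-negativity of the fractional parts; your five explicit identities (all of which I have verified) are just the worked-out version of that one-line argument.
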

\begin{proof}
    Given that each $\{X'(i,j,\ell)\}$ is non-negative, each inequality follows immediately from \Cref{lemma: freq array for h_1h_2h_3^m}.
\end{proof}

    The five conditions in Lemma \ref{L fill cond} are assumed to be true throughout the rest of this section.

\begin{lemma}
\label{lemma: freq array with d in corner}
    For $m,d,k\in\mathbb{Z}$, if $m\geq 3$, $0\leq d\leq m$ and $m+2\leq k\leq 2m-1$, then there exists an outline array for the frequency array $F$ of order $k$, where
    $$F(i,j) = \begin{cases}
        1, & \text{if $i\leq m$ or $j\leq m$ and $i\neq j$,}\\
        d, & \text{if $(i,j)$ is $(m+1,m+2)$ or $(m+2,m+1)$,}\\
        0, & \text{otherwise.}
    \end{cases}$$
    If $m\geq 4$ and $d\geq \frac{m}{2}$, then there also exists an outline array for the same $F$, where $k=2m$.
\end{lemma}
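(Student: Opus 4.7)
The plan is to build the outline array by a symbol-by-symbol decomposition: I seek $0/1$ matrices $S_1,\dots,S_k$ with $\sum_{\ell} S_\ell = F$, where each $S_\ell$ has row sum vector $(F(i,\ell))_i$ and column sum vector $(F(\ell,j))_j$. The $k$ symbols fall into three classes dictated by $F$: the \emph{small} symbols $\ell\in[m]$, for which $S_\ell$ is a near-permutation matrix on $[k]\setminus\{\ell\}$; the \emph{heavy} symbols $\ell\in\{m+1,m+2\}$, for which $S_\ell$ has support on an $(m+1)\times(m+1)$ subgrid and must concentrate $d$ of its copies in a single row and column; and the \emph{tail} symbols $\ell\in\{m+3,\dots,k\}$, for which $S_\ell$ is a permutation matrix inside the $[m]\times[m]$ block. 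I construct these in three passes (tail, heavy, small) and then verify consistency.

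In the first pass I place each tail symbol $m+2+t$ (for $1\leq t\leq p:=k-m-2$) via the cyclic derangement $i\mapsto i+t\pmod m$. The bound $k\leq 2m-1$ gives $p\leq m-3$, so these cyclic shifts together form a Latin rectangle inside $[m]\times[m]$; this records precisely which symbols of $[m]$ still need to be placed in each row $i\leq m$ and each column $j\leq m$ inside the main $(m+2)\times(m+2)$ block.

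In the second pass I handle the heavy symbols. I fix the contents of the two special cells $(m+1,m+2)$ and $(m+2,m+1)$ as symmetric $d$-subsets $S^{12},S^{21}\subseteq[m]$ (for concreteness $\{1,\dots,d\}$), and choose $d$-subsets $T\subseteq[m]$ indicating which columns (respectively rows) of $[m]$ carry the forced copies of symbol $m+2$ in row $m+1$ (respectively $m+1$ in row $m+2$). The remaining border cells in the AB and BA blocks are filled by fixed-point-free bijections between complementary $d$-subsets of $[m]$, whose existence for $d\leq m$ follows from Hall's condition, while a careful coordinated choice of $T$ keeps these placements compatible with the tail of the first pass.

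In the third pass I fill the residual cells of the $[m]\times[m]$ block using the small symbols. After the first two passes the residual per-cell demand defines a bipartite multigraph $G'$ between rows $[m]$ and columns $[m]$ with uniform enough vertex degrees that De Werra's equitable edge-colouring theorem (\Cref{thm: de werra}) applies; translating the resulting $k$-colouring back yields the required placements of small symbols and completes the construction. The main obstacle is verifying that $G'$ really is equitable-compatible for $k$ colours and that the pre-placements do not create a forced conflict. This is exactly where the hypothesis $k\leq 2m-1$ (and the tighter $d\geq m/2$ analogue when $k=2m$) is used: these inequalities ensure that the tail Latin rectangle leaves enough unused symbols in each row and column, and that the border $d$-subsets can be selected so as to avoid every obstruction to Hall's condition and to De Werra's hypothesis.
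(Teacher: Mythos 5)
Your decomposition into tail, heavy and small symbol classes is reasonable, and pass one is fine (the cyclic derangements do give a valid placement of the tail symbols), but the argument has a genuine gap at exactly the point you flag yourself: pass three. De Werra's theorem (\Cref{thm: de werra}) only guarantees that each colour class meets each vertex $\lfloor d(v)/t\rfloor$ or $\lceil d(v)/t\rceil$ times for $t$ colours; it gives no control over \emph{which} colours meet which vertices. In the residual of the $[m]\times[m]$ block, symbol $\ell$ must appear in precisely those rows and columns where it was not already placed during passes one and two (and never in row or column $\ell$), so you need an edge colouring realising a prescribed $0/1$ symbol--vertex incidence, not merely a balanced one. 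That is a Hall-type completion problem whose feasibility depends delicately on the earlier placements, and neither it nor the ``careful coordinated choice of $T$'' in pass two is verified. The paper sidesteps this by reversing your order of operations: it places the small symbols in the $[m]\times[m]$ block \emph{first}, as (a subset of the transversals of) a latin square of order $m$ with disjoint transversals (with a separate array for $m=6$), so their row and column incidences are correct by construction; two of the surviving transversals are then used to route the heavy symbols into the border; and the equitable colouring is reserved for the two situations where every vertex degree equals the number of colours --- the tail symbols in the leftover cells of $A$, and the blocks $C_1$, $C_2$ --- which is the only setting in which De Werra's theorem pins the multiplicities down exactly.

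Two further omissions. First, the blocks in rows and columns $m+3,\dots,k$ must also receive their small symbols, and your three passes never fill them. Second, the case $k=2m$ with $d\geq m/2$ is dismissed in a parenthetical, but it is not a routine tightening of the same argument: when $k=2m$ each row and column of the $[m]\times[m]$ block has room for only one symbol of $[m+2]$, the transversal bookkeeping degenerates, and the paper resorts to a bespoke explicit construction built from two sequences $r$ and $s$. Your proposal gives no construction for that case.
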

\begin{proof}
    We first consider $k\leq 2m-1$. Let the outline array $O$ be split into subarrays as shown in \Cref{fig: split outline array}, where the columns (and rows) are of widths $m$, $2$ and $k-(m+2)$, in that order. The arrays $\emptyset_i$ are empty arrays of the appropriate sizes, and the arrays $A$ and $D$ are empty along the main diagonal. The remaining cells of $A$ contain one symbol each, and the remaining cells of $D$ contain $d$ symbols. The cells of the arrays $B_i$ and $C_i$ each contain a single symbol.
    \begin{figure}[h]
        \centering
        \begin{tikzpicture}
            \draw (0,0) rectangle (4,-4);
            \draw (1.5,0) -- (1.5,-4);
            \draw (2.5,0) -- (2.5,-4);
            \draw (0,-1.5) -- (4,-1.5);
            \draw (0,-2.5) -- (4,-2.5);

            \node at (0.75,-0.75) {$A$};
            \node at (2,-0.75) {$B_1$};
            \node at (0.75,-2) {$B_2$};
            \node at (3.25,-0.75) {$C_1$};
            \node at (0.75,-3.25) {$C_2$};
            \node at (2,-2) {$D$};
            \node at (3.25,-2) {$\emptyset_1$};
            \node at (2,-3.25) {$\emptyset_2$};
            \node at (3.25,-3.25) {$\emptyset_3$};
        \end{tikzpicture}
        \caption{Structure of the outline array $O$}
        \label{fig: split outline array}
    \end{figure}

    Since $F(i,i) = 0$ for all $i\in[k]$, we require that symbol $i$ does not appear row $i$ or column $i$. Further, to satisfy the frequency array, $C_1$, $C_2$ and $D$ must contain only symbols from $[m]$, $B_1$ and $B_2^T$ contain $m-d$ symbols from $[m]$ in each column and the rest of the entries are $m+1$ or $m+2$, and $A$ must contain $m$ copies of each symbol in $[k]\setminus[m+2]$ (with one copy in each row and column) as well as some entries from $[m+2]$.

    We start by placing $2m+1-k$ entries in each row and column of $A$, leaving the main diagonal empty and making sure that $i$ does not appear in row or column $i$, and we also fill all entries in column 1 of $B_1$ and $B_2^T$ with $m+2$ and all entries in column 2 with $m+1$. We then swap $m-d$ entries in each of these columns with entries in $A$.

    When $m\neq 6$, take $A$ to be a latin square of order $m$ with $m$ disjoint transversals. Permute $A$ so that the cells of one transversal form the main diagonal, and $A(i,i) = i$ for all $i\in[m]$.

    Remove the symbols in the main diagonal transversal and in another $k-(m+2)$ of the transversals. This means that the cells of the main diagonal are empty and symbol $i$ does not occur in row or column $i$. There are $m-1-k+(m+2) = 2m+1-k$ transversals left in $A$, and since $k\leq 2m-1$, it follows that $2m+1-k \geq 2$. Denote two of the remaining transversals by $T_1$ and $T_2$.

    If $m=6$, take $A$ to be a latin square with 4 transversals, such as the one given in \Cref{fig: order 6 square with 4 transversals}. As before, permute this so that one of the transversals appears along the main diagonal with symbols in increasing order, and then delete the entries in the main diagonal. Denote two of the remaining transversals by $T_1$ and $T_2$. We remove entries of $A$ so that there are $2m+1-k$ symbols left in each row and column, and $2m+1-k$ of each symbol, where $T_1$ and $T_2$ remain and $2m+1-k = 13-k\leq 5$.

\begin{figure}[h]
    \centering
    $$\arraycolsep=2pt\begin{array}{|cccccc|}\hline
        1_a & 4 & 5_d & 6_b & 2_c & 3 \\
        3_b & 2_a & 6 & 1_d & 4 & 5_c \\
        5 & 1_c & 3_a & 2 & 6_d & 4_b \\
        6_c & 5_b & 1 & 4_a & 3 & 2_d \\
        4_d & 6 & 2_b & 3_c & 5_a & 1 \\
        2 & 3_d & 4_c & 5 & 1_b & 6_a \\ \hline
    \end{array}$$
    \caption{A latin square of order 6 with 4 disjoint transversals}
    \label{fig: order 6 square with 4 transversals}
\end{figure}
    
    If $2m+1-k = 2$, then delete all entries that are not in $T_1$ or $T_2$. If $2m+1-k = 3$, remove all entries which are not in the three non-diagonal transversals. For $2m+1-k = 4$, delete the entries of the transversal which is not $T_1$ or $T_2$. When $2m+1-k = 5$, $A$ already satisfies the requirements.

    Therefore, whether or not $m=6$, we have an array $A$ with an empty main diagonal, $2m+1-k$ symbols in each row and column, $2m+1-k$ copies of each symbol in $[m]$, including two transversals, and symbol $i$ not appearing in row or column $i$.

    For $\ell\in[2]$, let $T_\ell^i$ denote the symbol in row $i$ of the transversal $T_\ell$, and let $^jT_\ell$ denote the symbol in column $j$. For all $i\in[m]$ and $j\in[2]$, let $j'$ be the remaining symbol in $[2]\setminus j$, and fill $B_1$ as
    $$B_1(i,j) = \begin{cases}
        m+j' & \text{if $T_j^i\leq d$,}\\
        T_j^i & \text{otherwise.}
    \end{cases}$$
    Similarly, for $j\in[m]$ and $i\in[2]$, let $i'$ be the remaining symbol in $[2]\setminus i$ and fill $B_2$ by
    $$B_2(i,j) = \begin{cases}
        m+i' & \text{if $^jT_i\leq d$,}\\
        ^jT_i & \text{otherwise.}
    \end{cases}$$
    
    Also, for the transversal $T_\ell$, replace each entry greater than $d$ with $m+\ell'$, where $\ell'$ is the symbol remaining in $[2]\setminus\ell$. Therefore, each row of $A$ and $B_1$ together still contains $2m+1-k$ symbols from $[m]$, as does each column of $A$ and $B_2$ together, with each symbol occurring $2m+1-k$ times across all rows (or columns) of the array pairs and $i$ never occurring in row or column $i$. Also, $m+1$ and $m+2$ appear once in each row and column of the respective array pairs, with $d$ copies of each in $B_1$ and $d$ copies of each in $B_2$.

    Now that $A$, $B_1$ and $B_2$ have entries placed as described earlier, we must fill $D$, $C_1$, $C_2$ and the remaining cells of $A$.

    Fill the two off-diagonal cells of $D$ each with the symbols of $[d]$. Thus, there are $d$ symbols in each, and the rows of $B_2$ and $D$ together contain each symbol from $[m]$ exactly once, and the columns of $B_1$ and $D$ contain each symbol of $[m]$ once.

    Therefore, the arrays $B_1$, $B_2$ and $D$ are filled as required.

    In the array $A$, there are $k-(m+2)$ empty off-diagonal cells in each row and column. Also, note that there are $k-(m+2)$ symbols in $[k]\setminus[m+2]$ which each need to appear once in the first $m$ rows and first $m$ columns of $O$, to satisfy the frequency array $F$.
    
    We fill the rest of $A$ by taking a colouring of the edges of the graph $G$, where $G$ is a bipartite graph on the sets $U$ and $V$ with $U=V=[m]$ and there is an edge between $u\in U$ and $v\in V$ if and only if cell $A(u,v)$ is empty. Thus, each vertex has degree $k-m-2$, and so we take an equitable edge-colouring of $G$ with $k-m-2$ colours, which exists by \Cref{thm: de werra}. Since the $k-m-2$ colours must occur equitably on the $k-m-2$ edges adjacent to each vertex, each vertex has one edge of each colour. For each edge $(u,v)$, if $(u,v)$ is of colour $s\in[k-m-2]$, then place symbol $m+2+s$ in cell $(u,v)$ of $A$. Thus, each off-diagonal cell of $A$ is filled, and each symbol of $[k]\setminus[m+2]$ occurs once in every row and column.

    Finally, to fill the array $C_1$, observe that there are $2m+1-k$ symbols from $[m]$ in each row of $A$ and $B_1$ combined, and so there are $k-m-2$ of these symbols which need to appear in each row of $C_1$. Create a bipartite graph $G$ on the sets $U=V=[m]$, where there is an edge between $u$ and $v$ if and only if symbol $v$ needs to appear in row $u$ of $C_1$. As before, each vertex has degree $k-m-2$, so an equitable colouring with $k-m-2$ colours gives each vertex a single edge of each colour. Construct $C_1$ by filling $C_1(u,j)$ with symbol $v$ if there is an edge between $u$ and $v$ with colour $j\in[k-m-2]$. Thus, each cell of $C_1$ is filled, and each symbol of $[m]$ appears once in each of the first $m$ rows of $O$, with the exception that $i$ does not appear in row $i$.

    Repeat the same process to fill $C_2$, using the first $m$ columns instead of rows.

    Therefore, $O$ is an outline array for $F$, where $k\leq 2m-1$.
    \vspace{0.125cm}

    We conclude the proof with a similar construction for the case $k = 2m$, where $m\geq 4$ and $d\geq \frac{m}{2}$. Let $O$ have the same structure shown in \Cref{fig: split outline array}. As before, we want the array $A$ to have $2m+1-k$ symbols of $[m+2]$ in each row and column. Here $2m+1-k = 1$, so we begin the construction of $A$ by placing a single transversal using symbols in $[m+2]$, while avoiding the main diagonal and making sure that symbol $i$ does not appear in row or column $i$.
    
    Take two sequences $r$ and $s$ of length $m$. For even $m$, define the sequences for all $i\in[m]$ by
    $$r_i = m+1-i,\quad\quad s_i = \begin{cases}
        i+1, & \text{for $i\notin\{\frac{m}{2},m\}$,}\\
        i+1-\frac{m}{2}, & \text{for $i\in\{\frac{m}{2},m\}$.}
    \end{cases}$$
    For odd $m$, let the sequences be
    $$r_i = \begin{cases}
        m+1-i, & \text{if $i\leq\frac{m-1}{2}$,}\\
        1, & \text{if $i=\frac{m+1}{2}$,}\\
        m+2-i, & \text{otherwise.}
    \end{cases} \quad\quad s_i = \begin{cases}
        i+1, & \text{for $i\notin\{\frac{m-1}{2},m\}$,}\\
        \frac{m+1}{2}, & \text{for $i=m$,}\\
        1, & \text{for $i=\frac{m-1}{2}$.}
    \end{cases}$$

    In the array $A$, for each $i\in[m]$, fill one cell by
    $$A(r_i,i) = \begin{cases}
        m+1, & \text{if $i\leq m-d$,}\\
        m+2, & \text{if $i > d$,}\\
        s_i, & \text{otherwise.}
    \end{cases}$$
    Then fill $B_1$ for $i\in[m]$ and $j\in[2]$ as
    $$B_1(r_i,j) = \begin{cases}
        m+2, & \text{if $j=1$ and $i\leq d$,}\\
        s_i, & \text{if $j=1$ and $i> d$,}\\
        m+1, & \text{if $j=2$ and $i> m-d$,}\\
        s_i, & \text{if $j=2$ and $i\leq m-d$.}
    \end{cases}$$
    Similarly, for $j\in[m]$ and $i\in[2]$, fill $B_2$ by
    $$B_2(i,j) = \begin{cases}
        m+2, & \text{if $i=1$ and $j\leq d$,}\\
        s_{m+1-j}, & \text{if $i=1$ and $j> d$,}\\
        m+1, & \text{if $i=2$ and $j> m-d$,}\\
        s_{m+1-j}, & \text{if $i=2$ and $j\leq m-d$.}
    \end{cases}$$

    Thus, in the array $D$, cell $(1,2)$ has symbols $s_i$ for all $m-d<i\leq m$, and cell $(2,1)$ has the symbols $s_i$ for all $1\leq i\leq d$.

    Now that $B_1$, $B_2$, and $D$ are filled, $A$ has $2m+1-k$ symbols in each row and column, and $O$ has $2m+1-k$ copies of each symbol in $[m]$, the rest of $O$ is filled using the bipartite graph method.

    Thus, there is an outline array $O$ for the frequency array $F$.
\end{proof}

\begin{lemma}
\label{lemma: fill L (a^mbc)}
    For the values $a$, $b$, $c$ and $d$ as given in \Cref{lemma: freq array for h_1h_2h_3^m}, the frequency array $F$ has a corresponding outline array $B$.
\end{lemma}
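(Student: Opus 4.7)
The plan is to construct $B$ as a cellwise union of outline arrays for simpler frequency arrays via \Cref{lemma: sum freq arrays}, where each summand comes either from \Cref{lemma: freq array with d in corner} (possibly composed with \Cref{lemma: summing rows/columns of outline array} to amalgamate the trailing indices) or from a latin square with its main diagonal removed.

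First I would sharpen \Cref{L fill cond} using the explicit formulae of \Cref{lemma: freq array for h_1h_2h_3^m}. Writing $s_3=\{X'(3,3,3)\}$, $s_{13}=\{X'(1,3,3)\}$, $s_{23}=\{X'(2,3,3)\}$ and $s_{123}=\{X'(1,2,3)\}$ (each in $[0,1)$), the integers $a$, $b$, $c$, $d$ all lie in $\{0,1,\dots,m-1\}$, and the identity
$$b+c-(m-1)a = 2s_{123}-(m-1)(m-2)s_3$$
is an integer strictly less than $2$, whence $b+c\le (m-1)a+1$. The degenerate cases then dispose of themselves: $a=0$ forces $s_3=s_{13}=s_{23}=0$ and hence $b=c=d=0$, so $B$ is empty; while $b=0$ (resp.\ $c=0$) forces $d=0$ via condition (1) (resp.\ (2)), reducing to a construction using only one-sided blocks.

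Next I would introduce three types of building blocks, all outline arrays of order $m+2$: (i) for $b',c'\ge 1$ with $b'+c'\le m-1$ (plus the boundary $b'+c'=m$, valid when $m\ge 4$ and $d'\ge m/2$) and $0\le d'\le m$, a frequency array with inner entries $1$, border $m+1$ entries $b'$, border $m+2$ entries $c'$, and corner entries $d'$, obtained by taking \Cref{lemma: freq array with d in corner} at $k=m+b'+c'$ and amalgamating the trailing indices via \Cref{lemma: summing rows/columns of outline array} into two groups of sizes $b'$ and $c'$ (containing $m+1$ and $m+2$ respectively); (ii) a latin square of order $m+1$ with main diagonal removed, placed on $[m]\cup\{m+1\}$ or $[m]\cup\{m+2\}$, contributing $1$ to the inner block and $1$ to a single border; and (iii) a latin square of order $m$ with main diagonal removed on $[m]$, contributing $1$ to the inner block only.

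Finally, I would argue by cases depending on whether $b+c\le (m-1)a$ or $b+c=(m-1)a+1$, choosing non-negative integer multiplicities of the building blocks so that the cellwise sum equals $F$. In the routine case $b+c\le (m-1)a$, a single type-(i) block of parameters $(b',c',d)$ with $b',c'\ge 1$ and $b'+c'$ appropriately chosen, together with type-(ii) blocks filling in the remaining border contributions and type-(iii) blocks padding the inner count up to $a$, completes the sum. The main obstacle is the extremal case $b+c=(m-1)a+1$: since $b+c$ may then exceed $m-1$, no single type-(i) block can carry both borders, and the contribution must be distributed. For $m\ge 4$ the $k=2m$ extension of \Cref{lemma: freq array with d in corner} supplies a type-(i) block with $b'+c'=m$ (and the required $d'\ge m/2$ is guaranteed by the sharpened form of condition~(5)); for $m=3$, where that extension is unavailable, the finitely many extremal parameter vectors must be handled by direct ad hoc constructions along the lines of the proof of \Cref{lemma: freq array with d in corner}.
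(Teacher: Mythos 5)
Your overall strategy is the paper's: decompose $F$ into a sum of simpler frequency arrays, realize each via \Cref{lemma: freq array with d in corner} followed by \Cref{lemma: summing rows/columns of outline array}, combine with \Cref{lemma: sum freq arrays}, and treat the boundary case $b+c=(m-1)a+1$ (which does force $d\ge m/2$ by condition (5), and in fact forces $a=1$ since $b,c\le m-1$) via the $k=2m$ extension for $m\ge 4$ and ad hoc arrays for $m=3$. Your identity $b+c-(m-1)a=2\{X'(1,2,3)\}-(m-1)(m-2)\{X'(3,3,3)\}<2$ is correct and recovers what the paper extracts from \Cref{L fill cond}, and your type-(ii) and type-(iii) blocks (idempotent latin squares of orders $m+1$ and $m$ with the diagonal deleted) cleanly handle the summands that the paper leaves implicit when a part of its multiset partition has size $0$ or $1$.

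There is, however, a genuine capacity gap in your ``routine'' case $b+c\le(m-1)a$. You allot the border symbols to a \emph{single} type-(i) block, which carries at most $m-1$ of them, plus type-(ii) blocks, each of which carries exactly one border symbol while consuming one of the $a$ units of inner multiplicity. The total border capacity of your decomposition is therefore at most $(m-1)+(a-1)=m+a-2$, whereas the hypotheses only give $b+c\le(m-1)a$, which exceeds $m+a-2$ whenever $a\ge 2$ and $b+c$ is near its maximum. For example, $m=5$, $a=2$, $b=c=4$, $d=4$ satisfies every inequality of \Cref{L fill cond} (it arises from fractional parts $\{X'(1,2,3)\}=\{X'(1,3,3)\}=\{X'(2,3,3)\}=4/5$ and $\{X'(3,3,3)\}=2/15$), yet your plan would require $8-(b'+c')\le a-1=1$ with $b'+c'\le 4$; and nothing in your argument rules such parameter vectors out. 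The repair is exactly what the paper does: partition the full multiset consisting of $b$ copies of $m+1$ and $c$ copies of $m+2$ into $a$ parts each of size at most $m-1$ (possible precisely because $b+c\le(m-1)a$), and realize \emph{every} part --- not just one --- as a multi-border block via \Cref{lemma: freq array with d in corner} with corner parameter $d$ for the one part containing at least one copy of each of $m+1,m+2$ and corner parameter $0$ for the rest. Finally, note that the two exceptional $m=3$ arrays really must be exhibited rather than asserted; the paper displays them explicitly.
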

\begin{proof}
    If $a=0$ then by the conditions in \Cref{L fill cond} we are forced to have $b=c$. Using this, we then get that $2d\geq 2mb$, and so $m-1\geq d\geq mb$. This forces $b=0=c$, thus $d=0$ also. When all values of the frequency array $F$ are zero, we use the empty outline array $B$ and we are done. Thus, we assume from here that $a>0$.

    Since $2d\geq m(b+c - (m-1)a)$ and $d\leq m-1$, it is true that $(m-1)a \geq b+c - 2\frac{d}{m}\geq b+c - 2 + \frac{2}{m}$. 
    
    Let $A$ be a multiset of $b+c$ elements, where $m+1$ occurs $b$ times and $m+2$ occurs $c$ times. Let $A_1,\dots,A_a$ be a partition of $A$ into multisets.

    If $0<d<\frac{m}{2}$, then $(m-1)a\geq b+c-2\frac{d}{m}>b+c-1$. Thus, $(m-1)a\geq b+c$ and so can choose a partition of $A$ such that $|A_i|\leq m-1$ for all $i\in[a]$. If $\frac{m}{2}\leq d\leq m-1$, then $(m-1)a\geq b+c-2\frac{d}{m}>b+c-2$, and so $(m-1)a+1\geq b+c$. In this case, we  take $|A_i|\leq m-1$ for all $i\in[a]$ except $2\leq |A_1|\leq m$.

    If $d>0$, then $b,c>0$, so we further require that there is at least one copy each of $m+1$ and $m+2$ in $A_1$.

    Consider the case where $m=3$ and $\frac{m}{2}\leq d\leq m-1$. Then $d=2$. If $b+c \leq (m-1)a$ then $A$ can be partitioned with $|A_1|\leq m-1$. Otherwise, $b+c = 2a+1$, and since $b+c\leq 2(m-1) = 4$, it follows that $a=1$. In this case, there is only one multiset $A_1$ in the partition, and either $b=2$ and $c=1$ or $c=1$ and $b=2$.

    Let $A_n(m+1)$ be the number of copies of $m+1$ in $A_n$, and $A_n(m+2)$ be the same for $m+2$. For each $n\in[a]$, let $F_n$ be a frequency array of order $m+2$ where
    $$F_1(i,j) = \begin{cases}
        0 & \text{if $i=j$,}\\
        1 & \text{if $i,j\leq m$,}\\
        A_1(i) & \text{if $i>m$ and $j\leq m$,}\\
        A_1(j) & \text{if $j>m$ and $i\leq m$,}\\
        d & \text{if $(i,j)$ is $(m+1,m+2)$ or $(m+2,m+1)$,}\\
        0 & \text{otherwise.}
    \end{cases}$$
    and for $n>1$,
    $$F_n(i,j) = \begin{cases}
        0 & \text{if $i=j$,}\\
        1 & \text{if $i,j\leq m$,}\\
        A_n(i) & \text{if $i>m$ and $j\leq m$,}\\
        A_n(j) & \text{if $j>m$ and $i\leq m$,}\\
        0 & \text{otherwise.}
    \end{cases}$$

    Consider the case where $m=3$ and $|A_1| = m$. As shown earlier, this case forces $d=2$, $a=1$ and $\{A_1(m+1),A_1(m+2)\} = \{1,2\}$. In either case, the appropriate outline array $O_1$, corresponding to $F_1$, is given in \Cref{fig: $m=3$ case}.

    \begin{figure}[h]
        \centering
        $\begin{array}{|ccc|cc|}\hline
            - & 5 & 4 & 2,3 & 4 \\
            3 & - & 4 & 1,5 & 4 \\
            4 & 4 & - & 2,5 & 1 \\ \hline
            2,5 & 1,3 & 1,5 & - & 2,3 \\
            4 & 4 & 2 & 1,3 & - \\ \hline
        \end{array}$ \quad\quad\quad
        $\begin{array}{|ccc|cc|}\hline
            - & 4 & 5 & 5 & 2,3 \\
            3 & - & 5 & 5 & 1,4 \\
            5 & 5 & - & 1 & 2,4 \\ \hline
            5 & 5 & 2 & - & 1,3 \\
            2,4 & 1,3 & 1,4 & 2,3 & - \\ \hline
        \end{array}$
        \caption{The outline arrays $O_1$ when $m=3$ and $|A_1| = m$}
        \label{fig: $m=3$ case}
    \end{figure}
    
    In general, to construct the outline array $O_n$ corresponding to $F_n$, we first apply \Cref{lemma: freq array with d in corner} with order $k = m + |A_n|$ and parameter $d$ in \Cref{lemma: freq array with d in corner} equal to $d$ as defined here when $n=1$, and otherwise 0. We then apply \Cref{lemma: summing rows/columns of outline array} with any partition $S$ where $S_i = \{i\}$ for all $i\in[m]$, $|S_{m+1}| = A_n(m+1)$ and $|S_{m+2}| = A_n(m+2)$. When $n=1$ and $d>0$, we established above that $A_n(m+1),A_n(m+2)\geq 1$, and we require here that $m+1\in S_{m+1}$ and $m+2\in S_{m+2}$.

    Summing the outline arrays $O_n$ gives an outline array for the frequency array $F$, where $F = \sum_{n=1}^a F_n$. Therefore, by \Cref{lemma: sum freq arrays}, there is an outline array for $F$ also, and so the array $B$ exists.
\end{proof}

\begin{theorem}
    An $\LS(h_1h_2h_3^m)$ exists for $m\geq 3$ if and only if $h_1\leq mh_3$, $h_2\leq mh_3$, and $h_3^2 - \frac{h_3}{m-1}(h_1+h_2) + \frac{2}{m(m-1)}h_1h_2 \geq 0$.
\end{theorem}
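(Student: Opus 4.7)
The plan is to combine the rational outline square construction of \Cref{thm: h_1h_2h_3^m rational} with the frequency-array machinery of Section 7 to lift it to a genuine realization. Essentially all the ingredients are already in place, so the proof is a short assembly rather than new combinatorial work.

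For necessity, I would argue that \Cref{thm: h_1h_2h_3^m rational} already established the three inequalities as necessary for an $\SROS(h_1h_2h_3^m,\{h_3^m\})$, via \Cref{condition: sufiĉa?} with $C=D=\emptyset$ and the three choices $(A,B)=(\{2\},\{1\})$, $(\{1\},\{2\})$, and $(\{1,2\},\{3,\dots,k\})$. Since any realization yields an $\ROS$ by \Cref{lemma: always symmetric solution} (and hence an $\SROS$ by \Cref{cor: ROS iff SROS}), these conditions are also necessary for an $\LS(h_1h_2h_3^m)$.

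For sufficiency, my plan is as follows. Given the three inequalities, \Cref{thm: h_1h_2h_3^m rational} provides an $\SROS(h_1h_2h_3^m,\{h_3^m\})$, call it $X$. Permute rows and columns to put the partition in the order $(h_3^m h_1 h_2)$ and read off the order-$(m+2)$ frequency array $F$ of fractional parts described in \Cref{lemma: freq array for h_1h_2h_3^m}, with parameters $a,b,c,d$. \Cref{L fill cond} confirms that these parameters satisfy the five inequalities required to invoke \Cref{lemma: fill L (a^mbc)}, which produces an outline array $B$ corresponding to $F$. Feeding $B$ into \Cref{lemma: rational outline to full} yields an outline square $L$ respecting $(h_1h_2h_3^m)$, and \Cref{lemma: outline to realization} then lifts $L$ to the desired realization.

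The main obstacle has already been navigated in the preceding lemmas, particularly \Cref{lemma: fill L (a^mbc)} together with \Cref{lemma: freq array with d in corner}, which handle the delicate fill-in of the array $B$: the case split on the size of $|A_n|$, the exceptional order $m=6$ for latin squares with $m$ disjoint transversals, and the equitable edge-colouring step via \Cref{thm: de werra}. Once those lemmas are in hand, the present theorem is essentially a one-line composition of \Cref{thm: h_1h_2h_3^m rational}, \Cref{lemma: fill L (a^mbc)}, \Cref{lemma: rational outline to full}, and \Cref{lemma: outline to realization}.
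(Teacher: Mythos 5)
Your proposal is correct and follows essentially the same route as the paper: necessity via \Cref{lemma: always symmetric solution} together with \Cref{thm: h_1h_2h_3^m rational}, and sufficiency by constructing the rational outline square, building the fractional-part array $B$ via \Cref{lemma: freq array for h_1h_2h_3^m}, \Cref{L fill cond} and \Cref{lemma: fill L (a^mbc)}, and then lifting through \Cref{lemma: rational outline to full}. Your write-up is in fact slightly more explicit than the paper's, which leaves the final appeal to \Cref{lemma: outline to realization} implicit.
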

\begin{proof}

    It is clear that the conditions are necessary from \Cref{lemma: always symmetric solution} and \Cref{thm: h_1h_2h_3^m rational}.

    Using \Cref{thm: h_1h_2h_3^m rational}, the necessary conditions are sufficient to construct a symmetric rational outline square $O$. If the array $B$ in \Cref{lemma: rational outline to full} exists then there exists an outline square for this realization.

    $B$ always exists by \Cref{lemma: fill L (a^mbc)}, and so it is always possible to bring the floored rational outline square $O$ to an integer solution.
\end{proof}

\section{Concluding remarks}

The necessary structure of a realization varies considerably depending on the number and size variation of the subsquares. This has led to constructions which allow for only heavily restricted forms of partition, and made the formulation of general necessary conditions a challenging problem. In this work we demonstrate the value of rational outline squares in  addressing both of these problems, providing significant new results on both the existence and necessary conditions for realizations.

The existence of a symmetric rational outline square is necessary for the existence of a realization, and it also provides a construction for a sufficiently large scalar multiple of the partition. It is clear that the existence of an $\LS(h_1\dots h_k)$ and an $\ROS(h_1\dots h_k)$ are related problems, and it is possible that the existence of a symmetric rational outline square is also sufficient.

Although outline squares have been used before to find realizations, the extra conditions added in an $\ROS(h_1\dots h_k)$ and $\SROS(h_1\dots h_k)$ can make it easier to find such outline squares. We have demonstrated this with the constructions targeting \Cref{conj: k-2,conj: largest 3} and partitions with many repeated parts. The use of similar symmetric rational outline squares also adds structure to the array $B$ that is leftover when the initial rational outline square is floored, and this allowed for the construction of all $\LS(h_1h_2^ah_3^b)$.

Certainly the most general result is the improved necessary condition in \Cref{condition: sufiĉa?}, which subsumes and improves upon all known conditions. This condition is sufficient for all $\LS(h_1\dots h_k)$ and $\ROS(h_1\dots h_k)$ discussed here, and it is of interest to determine if it is sufficient in general.

\printbibliography

@article{colbourn2018latin,
  title={On a Latin square problem of Fuchs},
  author={Colbourn, C.J.},
  %author={Colbourn, Charles J},
  journal={Australas. J. Combin},
  volume={71},
  pages={501--536},
  year={2018}
}

@inproceedings{heinrich2006latin,
  title={Latin squares composed of four disjoint subsquares},
  author={Heinrich, K.},
  %author={Heinrich, Katherine},
  booktitle={Combinatorial Mathematics V: Proceedings of the Fifth Australian Conference, Held at the Royal Melbourne Institute of Technology, August 24--26, 1976},
  pages={118--127},
  year={1976},%2006?
  organization={Springer}
}

@article{kuhl2018latin,
  title={Latin squares with disjoint subsquares of two orders},
  author={Kuhl, J. and Schroeder, M.W.},
  %author={Kuhl, Jaromy and Schroeder, Michael William},
  journal={Journal of Combinatorial Designs},
  volume={26},
  number={5},
  pages={219--236},
  year={2018},
  publisher={Wiley Online Library}
}

@article{kuhl2019existence,
  title={On the existence of partitioned incomplete Latin squares with five parts},
  author={Kuhl, J. and McGinn, D. and Schroeder, M.W.},
  %author={Kuhl, Jaromy and McGinn, Donald and Schroeder, Michael William},
  journal={Australasian Journal of Combinatorics},
  %journal={AUSTRALASIAN JOURNAL OF COMBINATORICS},
  volume={74},
  number={1},
  pages={46--60},
  year={2019}
}

@book{keedwell2015latin,
  title={Latin squares and their applications},
  author={Keedwell, A.D. and D{\'e}nes, J.},
  %author={Keedwell, A Donald and D{\'e}nes, J{\'o}zsef},
  year={2015},
  publisher={Elsevier}
}

@article{heinrich1982disjoint,
  title={Disjoint subquasigroups},
  author={Heinrich, K.},
  %author={Heinrich, Katherine},
  journal={Proceedings of the London Mathematical Society},
  volume={3},
  number={3},
  pages={547--563},
  year={1982},
  publisher={Wiley Online Library}
}

@article{hilton1980reconstruction,
  title={The reconstruction of latin squares with applications to school timetabling and to experimental design},
  author={Hilton, A.J.W.},
  %author={Hilton, AJW},
  journal={Combinatorial Optimization II},
  pages={68--77},
  year={1980},
  publisher={Springer}
}

@article{denes1963some,
  title={Some problems on quasigroups},
  author={D{\'e}nes, J. and P{\'a}sztor, E.},
  %author={D{\'e}nes, J and P{\'a}sztor, E},
  journal={Magyar Tud. Akad. Mat. Fiz. Oszt. K{\"o}zl},
  volume={13},
  pages={109--118},
  year={1963}
}

@article{kemp2024latin,
  title={Latin squares with five disjoint subsquares},
  author={Kemp, T.},
  journal={Journal of Combinatorial Designs},
  volume={33},
  number={2},
  pages={39--57},
  year={2025},
  publisher={Wiley Online Library}
}

@article{donovan2025latin,
  title={Latin hypercubes realizing integer partitions},
  author={Donovan, D. and Kemp, T. and Lefevre, J.},
  journal={Discrete Mathematics},
  volume={348},
  number={3},
  %pages={114333},
  year={2025},
  publisher={Elsevier}
}

@article{bose1960further,
  title={Further results on the construction of mutually orthogonal Latin squares and the falsity of Euler's conjecture},
  %author={Bose, Raj Chandra and Shrikhande, Sharadchandra S and Parker, Ernest T},
  author={Bose, R.C. and Shrikhande, S.S. and Parker, E.T.},
  journal={Canadian Journal of Mathematics},
  volume={12},
  pages={189--203},
  year={1960},
  publisher={Cambridge University Press}
}

@article{de1971balanced,
  title={Balanced schedules},
  author={De Werra, D.},
  journal={INFOR: Information Systems and Operational Research},
  volume={9},
  number={3},
  pages={230--237},
  year={1971},
  publisher={Taylor \& Francis}
}

@article{allsop2023latin,
  title={Latin squares without proper subsquares},
  %author={Allsop, Jack and Wanless, Ian M},
  author={Allsop, J. and Wanless, I.M.},
  journal={arXiv preprint arXiv:2310.01923},
  year={2023}
}

@article{gill2024canonical,
  title={Canonical labelling of Latin squares in average-case polynomial time},
  %author={Gill, Michael J and Mammoliti, Adam and Wanless, Ian M},
  author={Gill, M.J. and Mammoliti, A. and Wanless, I.M.},
  journal={arXiv preprint arXiv:2402.06205},
  year={2024}
}

\end{document}